\documentclass[12pt,a4paper]{amsart}

\usepackage[a4paper,left=2.25cm,right=2.25cm,top=3cm,bottom=3cm]{geometry}
\usepackage{palatino}
\usepackage{euscript} 
\usepackage{mathrsfs} 
\usepackage{amssymb,mathtools}
\usepackage{xcolor}
\usepackage{tikz}
\usetikzlibrary{decorations.pathreplacing}
\usepackage{hyperref}

\numberwithin{equation}{section}

\allowdisplaybreaks[1]

\newcommand\Sym{{\mathscr S}}
\newcommand\ZZ{{\mathbb Z}}
\tikzset{
  perm dot/.style={circle,draw=black,fill=black,inner sep=5pt,
    outer sep=0pt,transform shape},
  new dot/.style={circle,draw=red!65!black,fill=red!65!black,inner sep=5pt,
    outer sep=0pt,transform shape}
}

\newcommand{\Av}{\operatorname{Av}}
\newcommand{\D}{\mathscr D}
\newcommand{\C}{\mathscr C}
\newcommand{\Dtwo}{\mathscr D^{(2)}}
\newcommand{\Dthree}{\mathscr D^{(3)}}
\newcommand{\Ctwo}{\mathscr C^{(2)}}
\newcommand{\Cthree}{\mathscr C^{(3)}}

\newtheorem{theorem}{Theorem}[section]
\newtheorem{proposition}[theorem]{Proposition}
\newtheorem{lemma}[theorem]{Lemma}
\newtheorem{corollary}[theorem]{Corollary}
\newtheorem{remark}[theorem]{Remark}

\definecolor{remcolor}{RGB}{50, 200, 210}
\newcommand{\rem}[1]{}

\title[$q$-Difference Equations for two classes of Pattern Avoiding Permutations]{$q$-Difference Equations for two classes of\\Pattern Avoiding Permutations}
\author{Paul Zinn-Justin}
\address{Paul Zinn-Justin, School of Mathematics and Statistics, The University of Melbourne, 
Victoria 3010, $\hbox{Australia}$}
\email{pzinn@unimelb.edu.au}
\thanks{The author was supported by ARC grants DP210103081 and DP240101787. He 
is grateful for the hospitality of the Erwin Schr\"odinger International Institute where this research was initiated at the Workshop on Algorithmic and Enumerative Combinatorics in 2017. He would like to thank J.~Pantone for discussions in the framework of a parallel collaboration,
and for pointing out reference \cite{BMBP} to him.}
\date{}

\begin{document}

\begin{abstract}
We study the nested permutation classes
\[
 \Dthree=\Av(4123,4231,4312)\subset
 \Dtwo=\Av(4123,4312).
\]
We relate the corresponding generating functions $D^{(2)}(z)$, $D^{(3)}(z)$ to $q$-difference equations.
Solving these equations allows to show that the generation functions are not D-finite,
and that
\begin{align*}
 [z^n]D^{(3)}(z)&\sim 0.0189672325071988\ldots\,
 (4.46840899160393\ldots)^n
\\
 [z^n]D^{(2)}(z)&\sim0.0849833632890843\ldots\,
 (3+2\sqrt2)^n n^{-3/2}.
\end{align*}
\end{abstract}

\maketitle

\section{Introduction}\label{sec:introduction}
For a set $B$ of permutations, let $\Av(B)$ denote the class of
permutations avoiding every member of $B$. This paper studies the ordinary generating functions
\[
 D^{(j)}(z)=\sum_{n\geq0}d_n^{(j)}z^n,
 \qquad d_n^{(j)}=|\D^{(j)}\cap \Sym_n|,
 \qquad j\in\{2,3\},
\]
in the two cases
\[
 \Dtwo=\Av(4123,4312),
 \qquad
 \Dthree=\Av(4123,4231,4312).
\]

The first terms are
\begin{align*}
 (d_n^{(2)})_{n\geq0}
 &=1,1,2,6,22,89,382,1711,7922,37663,\ldots,\\
 (d_n^{(3)})_{n\geq0}
 &=1,1,2,6,21,79,310,1251,5150,21517,\ldots.
\end{align*}
Albert, Homberger, Pantone, Shar and Vatter~\cite{AHPSV} used restricted
containers to generate long initial segments of both sequences. Their
differential-approximant analysis suggested that
\[
 d_n^{(2)}\sim C(3+2\sqrt2)^n n^{-3/2}
\]
for some $C>0$, while the data for $d_n^{(3)}$ suggested pure exponential
growth. They also singled out these generating functions as apparently
non-D-finite, and conjectured the stronger property of differential
transcendence.

These examples have remained natural small test cases for the boundary
between exact and experimental enumeration. Garrabrant and Pak~\cite{GP}
disproved the Noonan--Zeilberger conjecture by constructing permutation
classes with non-P-recursive counting sequences, making compact explicit
examples particularly attractive; see Vatter~\cite{Vatter} for a survey
of the surrounding questions.

Recently, Bousquet-M\'elou, Bouvel and Pantone~\cite{BMBP} announced exact formulae for the
complement-equivalent class $\Av(1243,1324,1432)$; our
solution bears some similarity to those, though the exact relation deserves further investigation.
\rem{read that extended abstract}

We treat the two classes in parallel. For each
class, insertion of a new maximum is controlled by a distinguished suffix
belonging to a simpler container class. Two integer parameters of that
suffix lead to refined enumeration, with polynomials $B_n^{(j)}(x,y)$,
that satisfy a recurrence relation. Equivalently, the trivariate series
\[
 \mathbb G^{(j)}(z;x,y)=\sum_{n\geq1}B_n^{(j)}(x,y)z^{n-1}.
\]
satisfy linear functional equations which are very similar in the two cases.
In particular, these functional equations have the
same kernel
$
 (1-y)(1-zx)+zy
$.
Its root defines a M\"obius transformation which, after a change of
variables, becomes the dilation $w\mapsto q w$ and leads to $q$-difference
equations. This shared setup is
the conceptual reason to treat the two problems together.

The analytic behaviour of the two generating series is nevertheless quite different.
In the three-pattern
case, coefficientwise iteration away from the attracting fixed point
produces analytic numerator and denominator functions in the unit disc as
limits of a second-order recurrence. The denominator has a theta-type
oscillation near $q=1$. In the two-pattern
case, iteration toward the attracting fixed point produces a convergent
product plus a convergent sum; its first obstruction is the collision of
two roots of a quadratic coefficient. Thus, these very similar $q$-difference equations
lead to either a meromorphic, pure-exponential regime or an algebraic branch cut,
$n^{-3/2}$ regime.

We now state the main results.
Recall that a formal power series is \emph{D-finite} if it satisfies a
nonzero linear differential equation with polynomial coefficients. A
sequence is \emph{P-recursive} if it satisfies a nonzero linear recurrence
with polynomial coefficients; a sequence is P-recursive if and only if
its ordinary generating function is D-finite~\cite{StanleyDF}.
The numerical constants appearing in the two
statements are defined and evaluated in \S\ref{sec:three-asymptotics}
and \S\ref{sec:two-asymptotics}, respectively.

\begin{theorem}\label{thm:main-three}
The generating function $D^{(3)}$ has infinitely many finite singularities
and is not D-finite;
consequently, $(d_n^{(3)})$ is not P-recursive.
For some $r^{(3)}>\rho^{(3)}$,
\[
 d_n^{(3)}=C^{(3)}(\rho^{(3)})^{-n}+O((r^{(3)})^{-n}),
\]
where
\begin{align*}
 \rho^{(3)}&=0.2237933013470754447\ldots,\\
 (\rho^{(3)})^{-1}&=4.468408991603930817\ldots,\\
 C^{(3)}&=0.018967232507198778\ldots.
\end{align*}
\end{theorem}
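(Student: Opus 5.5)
The plan follows the route sketched in the introduction. First, a structural decomposition of $\Dthree$: each permutation is built by inserting successive maxima, and the admissible positions are governed by a distinguished suffix lying in a simpler container class. Two integer statistics $(a,b)$ of that suffix determine the insertion options, giving a recurrence for the refined polynomials $B_n^{(3)}(x,y)$ and hence a linear functional equation for $\mathbb G^{(3)}(z;x,y)$. Its kernel is $K=(1-y)(1-zx)+zy$. Then $D^{(3)}(z)$ is recovered as a specialization of $\mathbb G^{(3)}$, for instance at $x=y=1$, up to an elementary factor.

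Next comes the kernel method. Setting $y=Y(x)$ a root of $K$, the map $x\mapsto$ (new $x$-value forced on the other side of the equation) is a M\"obius transformation $\mu_z$ with two fixed points, one attracting and one repelling. I would conjugate it to $w\mapsto qw$, with $q=q(z)$ an explicit algebraic function satisfying $q(0)=0$. The equation then becomes a second-order linear $q$-difference equation for the unknown univariate series. I would iterate it away from the attracting fixed point, coefficientwise in $z$. The resulting second-order recurrence has solutions $N_k,M_k$ that converge to functions $N(q),M(q)$ analytic in $|q|<1$. This yields an expression $D^{(3)}=R(z)+S(z)\,N(q(z))/M(q(z))$ with $R,S$ algebraic.

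For the asymptotics, the dominant singularity is the smallest positive zero $\rho^{(3)}$ of $z\mapsto M(q(z))$. Several checks are needed:
\begin{enumerate}
\item this zero occurs before any singularity of $q(z)$ or of $R,S$, and before $|q|=1$;
\item the zero is simple and the numerator does not vanish there;
\item no other zeros lie on $|z|=\rho^{(3)}$.
\end{enumerate}
These give a simple pole, with $C^{(3)}=-\operatorname{Res}$ divided by $\rho^{(3)}$. Taking $r^{(3)}$ strictly between $\rho^{(3)}$ and the next singularity gives the error term by a standard transfer theorem. Numerically certifying the root, for example by interval arithmetic on truncations of the convergent series for $M$ with explicit tail bounds, gives the stated digits.

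For non-D-finiteness I would show that the zeros of $M(q(z))$ accumulate. As $z$ approaches the value $z_1$ where $|q(z)|\to1$, the denominator has theta-type oscillation: a modular-type transformation shows that $M$ behaves like a nonvanishing factor times a theta function in a variable that winds infinitely often. Hence infinitely many zeros $z_k\to z_1$. I must also check that these are genuine poles of $D^{(3)}$, i.e. that $N$ does not cancel them. One option is to use the Wronskian of the $q$-difference equation, which prevents common zeros of $N$ and $M$. A D-finite function has only finitely many singularities, so $D^{(3)}$ is not D-finite, and by \cite{StanleyDF} the sequence $(d_n^{(3)})$ is not P-recursive.

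The main obstacle is this last analytic step: making the theta-type asymptotics of $M$ near $|q|=1$ rigorous enough to produce infinitely many zeros, together with the non-cancellation. I would first try a Jacobi-triple-product or Mellin-transform analysis of $\log M$ along a path approaching $z_1$. If that fails, a fallback is an argument-principle count on small annuli that shows the zero count grows.
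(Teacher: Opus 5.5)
Your outline matches the paper's architecture: the succession rule, the kernel $(1-y)(1-zx)+zy$, the substitution $z=t/(1+t)^2$ with $q=t^2$, outward iteration, a second-order coefficient recurrence with analytic limits on $|t|<1$, and $D^{(3)}=1-zS(t)/R(t)$. Below, $R,S$ are the paper's limit functions, i.e.\ your $M,N$; your algebraic prefactors are $1$ and $-z$. However, wherever the paper does real work, you either give no mechanism or propose one that would not work.

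\emph{The quotient formula.} The $q$-equation is first order in $\Phi_t$ but contains the unknown constant $\widehat g^{(3)}(t)$. The outward iteration is done coefficientwise in $w$, which is where the recurrence in $k$ comes from. It gives $\Phi_t=\widehat g^{(3)}U_t+V_t$, whose $w^k$-coefficient is $(-1)^k\bigl((1+t)(1-t)^2\bigr)^kt^{-k(k+3)}\bigl(\widehat g^{(3)}R_k+S_k\bigr)$. What forces $\widehat g^{(3)}=-S/R$ is that the combinatorial $\Phi_t$ has positive radius of convergence in $w$. This is incompatible with the factor $t^{-k(k+3)}$ unless $\widehat g^{(3)}R+S=0$. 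You never state this.

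\emph{The oscillation (the most serious gap).} Neither of your tools applies to a function known only as the limit of a recurrence. $M$ has no product or pure theta-series structure for the triple product to exploit. Moreover, $\log M$ is singular exactly at the zeros you want to detect, so a Mellin analysis of it cannot produce sign changes. The missing idea is to solve the recurrence in closed form,
$R(t)=\sum_{j\geq0}(-1)^jt^{j^2}\bigl(1-(1+t-t^2)(1-t^j)\bigr)/\bigl((1+t)^j(1-t)^{2j-1}(t;t)_j\bigr)$,
a combination of two values of Ramanujan's entire function. Multiplying by $(t;t)_\infty$, with $\varepsilon=-\log t$ and $t^\nu=(1-t)\sqrt{1+t}$, turns this, up to a positive factor, into a bilateral alternating sum $\sum_{j\in\ZZ}(-1)^jF_\varepsilon(j)$ of a Schwartz function carrying the Gaussian $e^{-\varepsilon(u-\nu)^2}$. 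Poisson summation then leaves only the frequencies $\pm\frac12$. This shows that $R(t)$ is a positive factor times $\bigl(\cos(\pi\nu+\delta)+o(1)\bigr)$ with $\nu\to\infty$ as $t\uparrow1$. Hence $R$ has infinitely many sign changes on $(0,1)$, accumulating at $t=1$, i.e.\ at $z=1/4$.

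\emph{Non-cancellation.} The Wronskian alone does not exclude a common zero. Since $W_k=R_kS_{k-1}-R_{k-1}S_k=t^{2k^2+k-2}(1+t)^{2-2k}(1-t)^{3-4k}\to0$, nothing about the limits $R,S$ follows from it directly. You also need that $X\mapsto\lim_kX_k$ is a nonzero linear functional on the two-dimensional solution space of $X_k=a_kX_{k-1}-b_kX_{k-2}$. The paper shows $\lim_kX_k=D_2X_1-b_2D_3X_0$, with tail continuants satisfying $D_s=a_sD_{s+1}-b_{s+1}D_{s+2}$ and $D_s\to1$. Since every $b_k\neq0$, $D_2=D_3=0$ would force all $D_s=0$, which is impossible. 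Only then does $W_1=t/(1-t)\neq0$, i.e.\ independence of $(R_k)$ and $(S_k)$, rule out $R(t_0)=S(t_0)=0$.

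\emph{The dominant pole.} Your checks (2) and (3) are listed with no method. The paper uses sum closure. Each basis element of $\Dthree$ begins with its maximum, hence is sum-indecomposable. So $D^{(3)}=1/(1-I)$, where $I$ has nonnegative coefficients and $i_1=1$. A pole $\zeta$ with $|\zeta|=\rho^{(3)}$ gives $I(\zeta)=1=I(\rho^{(3)})$, and the triangle inequality then forces $\zeta=\rho^{(3)}$. Also $I'(\rho^{(3)})\geq i_1=1$, which makes the pole simple. A rigorous argument-principle count of the zeros of $R$ over the preimage of a slightly larger disc would be a much heavier alternative, but you would have to set it up.
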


\begin{theorem}\label{thm:main-two}
\[
 d_n^{(2)}\sim C^{(2)}(3+2\sqrt2)^n n^{-3/2},
 \qquad
 C^{(2)}=0.08498336328908425176\ldots.
\]
so that the generating function $D^{(2)}$ has radius of convergence
$
 \rho^{(2)}=3-2\sqrt2
$.
Moreover, analytic continuation of $D^{(2)}$ has infinitely many finite
poles. In particular, $D^{(2)}$ is not D-finite and $(d_n^{(2)})$ is not
P-recursive.
\end{theorem}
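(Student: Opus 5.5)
The plan for Theorem~\ref{thm:main-two} follows the strategy sketched in the introduction. First we set up the structural decomposition of $\Dtwo$. Insertion of a new maximum is governed by a distinguished suffix in a simpler container class, and two integer statistics of that suffix give the refined polynomials $B_n^{(2)}(x,y)$. From the recurrence for these polynomials we derive a linear functional equation for $\mathbb G^{(2)}(z;x,y)$ with kernel $K=(1-y)(1-zx)+zy$, and we recover $D^{(2)}(z)$ by specializing $x,y$ (presumably at $x=y=1$ or at a kernel root).

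Next we apply the kernel method. Solving $K=0$ for $y$ as a function of $x$ gives a M\"obius map $\phi_z$. Its two fixed points are roots of a quadratic in $x$ whose discriminant vanishes exactly when $z^2-6z+1=0$, that is at $z=3\pm2\sqrt2$. This is the source of the growth rate $3+2\sqrt2$. Conjugating $\phi_z$ to a dilation $w\mapsto qw$, with $q=q(z)$ the ratio of the multipliers at the two fixed points, turns the functional equation into a first-order inhomogeneous $q$-difference equation
\[
 F(qw)=a(w)F(w)+b(w).
\]

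We iterate this equation toward the attracting fixed point, $|q|<1$. This expresses the relevant specialization of $\mathbb G^{(2)}$, and hence $D^{(2)}$, as an infinite product $\prod_k a(q^kw_0)$ plus a sum $\sum_k b(q^kw_0)\prod_{j<k}a(q^jw_0)$. We must check that both converge locally uniformly for $z$ in a slit neighbourhood of $[0,3-2\sqrt2)$. Near $\rho^{(2)}=3-2\sqrt2$ we have $q\to1$, and we also need to follow the coefficients $a,b$, which depend on $z$ through the square root $\sqrt{z^2-6z+1}$. The key claim is that the first singularity is the branch point at $\rho^{(2)}$, where the two fixed points collide. We would show that the product-plus-sum representation stays analytic in a $\Delta$-domain at $\rho^{(2)}$, and that $D^{(2)}$ has a local expansion
\[
 D^{(2)}=c_0+c_1\sqrt{1-z/\rho^{(2)}}+O(1-z/\rho^{(2)}),
\]
with $c_1\neq0$. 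Flajolet--Odlyzko transfer then gives $d_n^{(2)}\sim C^{(2)}(3+2\sqrt2)^n n^{-3/2}$ with $C^{(2)}=-c_1/(2\sqrt\pi)$. The constant is evaluated numerically by summing the series at $\rho^{(2)}$.

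The main obstacle is the uniformity of the $q$-iteration as $|q|\to1$ at the branch point. The product and sum naively degenerate there, so we must show that the singular behaviour is exactly square-root and not worse, e.g.\ no essential or $\log$ contribution. The first step is to write the fixed-point-dependent quantities in terms of $s=\sqrt{z^2-6z+1}$. We then show that the combination defining $D^{(2)}$ is an even or analytic function of the fixed points plus $s$ times an analytic function, which requires a symmetry argument under swapping the two fixed points. If direct control fails, we would fall back on bounding the tail terms $a(q^kw_0)-1$ geometrically in $k$, using $|q|<1$ on the circle $|z|=\rho^{(2)}$ away from $\rho^{(2)}$. That also ensures there are no other dominant singularities on the circle of convergence.

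For the last assertion we continue $D^{(2)}$ analytically through the product formula into the region beyond $\rho^{(2)}$, on the other sheet where $|q|<1$ still holds. Poles arise where some factor $a(q^kw_0)$ has a pole or zero, i.e.\ where $q(z)^kw_0(z)$ hits a fixed singular point of $a$. These equations have solutions $z_k$ for infinitely many $k$, and they accumulate toward a curve such as $|q|=1$. We must check that the residues do not cancel against the sum term; genericity or a residue computation should suffice. A D-finite function has only finitely many singularities, so $D^{(2)}$ is not D-finite, and $(d_n^{(2)})$ is not P-recursive by~\cite{StanleyDF}.
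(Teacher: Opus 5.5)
\textbf{There is a genuine gap: the mechanism you propose for the singularity at $3-2\sqrt2$ is wrong, so your plan would not find it.} Your overall architecture matches the paper: succession rule, kernel, conjugation to a dilation, inward product-plus-sum, square-root transfer, and orbit poles on a second sheet. The failure is at the step meant to produce $3+2\sqrt2$.

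The kernel root $Y(x)=(1-zx)/(1-z-zx)$ has fixed points given by $zx^2-x+1=0$. Its discriminant is $1-4z$, not $z^2-6z+1$. With $z=t/(1+t)^2$ the fixed points are $x=1+t$ and $x=(1+t)/t$ (that is, $w=0,\infty$), and $w(Y(x))=t^2w(x)$, so $q=t^2$. This tends to $1$ only as $z\to1/4$. At $z=3-2\sqrt2$ one has $t=t^{(2)}\approx0.282$ and $q\approx0.08$. There the product and sum converge geometrically and uniformly, and nothing in the iteration itself is singular. So the degeneration $|q|\to1$ and the fixed-point swapping symmetry you plan to analyse simply do not occur at $\rho^{(2)}$. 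The same kernel also serves $\Dthree$, whose growth rate is $4.468\ldots$, which already shows the M\"obius map cannot be the source of $3+2\sqrt2$.

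\textbf{The missing idea is how the free constant is fixed.} Inward iteration of $\Psi_t(w)=\alpha_t(w)\Psi_t(t^2w)+\beta_t(w)$ gives $\Psi_t=c(t)P_t+Q_t$, where $c(t)=\Psi_t(0)=G^{(2)}(z;1+t)$ is unknown. Your formula treats the coefficient of the product as known. In fact $c(t)$ is forced by analyticity of the combinatorial $\Psi_t$ at the small zero $r(t)$ of the quadratic coefficient $H_t(w)$. Evaluating the undivided equation at $w=r(t)$ gives $\Psi_t(t^2r)=\eta_t(r)$, hence $c=(\eta_t(r)-Q_t(t^2r))/P_t(t^2r)$.

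The square root enters only through $r(t)$. One has $\operatorname{disc}_wH_t=(1-t)^2t^{-2}\Delta(t)$ with $\Delta(t)=t^2z^{-2}(z^2-6z+1)$. So $\rho^{(2)}$ is where the two roots $r,s$ of $H_t$ collide, not where the fixed points collide. You then still need a nondegeneracy statement, $\partial_r\widehat g^{(2)}(t^{(2)},r^{(2)})\neq0$, which the paper certifies by interval arithmetic. Your ``$c_1\neq0$'' has no mechanism behind it.

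The same misidentification undermines the non-D-finiteness part. The relevant two-sheeted surface is that of $r$ (equivalently $v=-tr$), not of anything governing $q$. When an orbit point $t^{2N+1}$ hits a zero of $H_t$, the would-be pole is exactly cancelled if that zero is $r$, because $c$ was chosen to make $\Psi_t$ regular there. A pole, of order three, survives only on the sheet where $t^{2N+1}=s$. These occur at zeros of $F_N(t)=H_t(t^{2N+1})$, which accumulate at $t=i$. An appeal to ``genericity'' cannot replace this analysis.
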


The paper is organised as follows. \S\ref{sec:common-method}
develops the combinatorial construction and kernel reduction that is common to both classes.
\S\ref{sec:three-pattern} treats the three-pattern
class, proves the main theorem in that case and gives a simple probabilistic application.
\S\ref{sec:two-pattern} treats the two-pattern class similarly; though the solution and analysis of the $q$-difference equation
is slightly simpler than in the previous case,
concluding on non D-finiteness involves a trickier analytic continuation argument.

\section{The common trivariate generating function method}\label{sec:common-method}
\subsection{Succession rules}\label{sec:succession}
We describe in this section ``succession rules'', in the terminology of the literature on generating
trees. (Similar ideas appear in the statistical mechanics literature under the name of ``transfer matrix'',
see for example \cite{CGZJ} for a transfer matrix enumeration
of a different class of pattern-avoiding permutations.)

Given a permutation class $\mathscr X$,
define the children of $\pi\in\mathscr X\cap \Sym_n$ to be the permutations in
$\mathscr X\cap \Sym_{n+1}$ obtained by inserting the new maximum $n+1$ into
$\pi$. Every nonempty permutation has a unique parent, obtained by
deleting its maximum. By a \emph{succession rule} we mean a labelling of
this tree by states such that the multiset of states of the children
depends only on the state of the parent.

Set
\[
 \Ctwo=\Av(123,312),
 \qquad
 \Cthree=\Av(123,231,312).
\]
The basis theorem for restricted-container machines~\cite{AHPSV}
identifies the $\C^{(j)}$-machine output with $\D^{(j)}$. The exact succession
rules can also be proved directly, and only the following elementary
descriptions of the container classes are needed.

Write $\delta_s=s(s-1)\cdots1$, allowing $\delta_0$ to denote an empty
block.


\begin{lemma}\label{lem:container-structure}
Every nonempty member of $\Cthree$ has a unique expression
\begin{equation}\label{eq:Cthree-blocks}
 \delta_\ell\oplus\delta_{k+1},
 \qquad k,\ell\geq0.
\end{equation}
Every nonempty member of $\Ctwo$ has an expression
\begin{equation}\label{eq:Ctwo-blocks}
 231[\delta_\ell,\delta_j,\delta_i],
 \qquad \ell,i\geq0,\quad j\geq1.
\end{equation}
In the second expression, $\ell$ is the number of entries before the
maximum and
$
 k=i+j-1
$
is the number after it. Thus $(k,\ell)$ is well-defined even when the
three-block expression is not unique.
\end{lemma}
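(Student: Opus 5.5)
Both descriptions come from locating the maximum entry $n$ of $\pi\in\C^{(j)}\cap\Sym_n$, writing $\pi=\alpha\,n\,\beta$, and reading off what the forbidden patterns force on $\alpha$, on $\beta$, and on how they interleave in value. Throughout, $\ell=|\alpha|$ and $k=|\beta|$.

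\emph{The class $\Ctwo=\Av(123,312)$.} I would argue in four steps.
\begin{enumerate}
\item Since $n$ plays the role of the $3$ in $123$, $\alpha$ avoids $12$, so $\alpha$ is decreasing.
\item Since $n$ plays the role of the $3$ in $312$, $\beta$ avoids $12$, so $\beta$ is decreasing.
\item Any $a\in\alpha$ lying below some $b\in\beta$ gives an occurrence $a\,n\,b$ of $123$. Hence every entry of $\alpha$ exceeds every entry of $\beta$, and the values of $\alpha$ and $\beta$ are $\{k+1,\dots,n-1\}$ and $\{1,\dots,k\}$ respectively.
\item Write $n\beta$ as $\delta_j$ shifted up, followed by $\delta_i$ at the bottom, with $j=1$ and $i=k$.
\end{enumerate}
This gives $\pi=231[\delta_\ell,\delta_j,\delta_i]$: the block of values $k+1..n-1$ sits first, the block containing $n$ is highest, and the bottom block comes last. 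In fact $\delta_\ell$ is the middle value block, $n\beta$ splits as the top block $\delta_1=(n)$, and $\beta=\delta_k$ is the low block. So the three blocks in value order are low $=\delta_i$, middle $=\delta_\ell$, high $=\delta_j$, and the positions order them as middle, high, low, which is exactly the pattern $231$. Existence therefore holds with $j=1$, $i=k$.

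For the statement about the parameters, take any expression $231[\delta_\ell,\delta_j,\delta_i]$ with $j\ge1$. The maximum lies in the $\delta_j$ block, namely at its first entry. So the number of entries before it is $\ell$, and the number after it is $(j-1)+i$. Hence $k=i+j-1$, and $(k,\ell)$ is determined by $\pi$ alone. Non-uniqueness really occurs, since $\delta_j\ominus\delta_i$ can be re-split; this is why only existence is claimed.

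\emph{The class $\Cthree$.} Here we also have $231$-avoidance. With $\alpha$ and $\beta$ decreasing as before, step 3 is now contradicted in the other direction whenever both are nonempty. If $\alpha$ is nonempty and $\beta$ is nonempty, then $\alpha$ lies above $\beta$, so $a\,n\,b$ with $a>b$ is a copy of $231$. Hence $\alpha$ or $\beta$ is empty.
\begin{itemize}
\item If $\beta$ is empty, then $\pi=\delta_{n-1}\oplus 1=\delta_\ell\oplus\delta_1$, so $k=0$.
\item If $\alpha$ is empty, then $\pi=n\,\delta_{n-1}=\delta_n=\delta_0\oplus\delta_{k+1}$.
\end{itemize}
Uniqueness is a small check. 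Since $\delta_\ell\oplus\delta_{k+1}$ has its maximum at position $\ell+1$, the pair $(k,\ell)$ is recovered from $\pi$. I would also double-check that these forms are indeed all of $\Cthree$.

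The main obstacle is reconciling this two-family answer with the stated general form $\delta_\ell\oplus\delta_{k+1}$ with arbitrary $k,\ell$. For $k,\ell\ge1$ that permutation contains $123$ only if $k+1\ge2$ and $\ell\ge\ldots$. Checking directly, $\delta_\ell\oplus\delta_{k+1}$ avoids $123$, since the number of ascending runs across its two blocks is 2. It contains $231$ or $312$? Take $\ell=1,k=1$: $1\,3\,2$, which avoids $123$, $231$ and $312$. So this case is fine, and my claim that $\alpha$ lies above $\beta$ was wrong. The correct reasoning for $\Cthree$ is instead: every $a\in\alpha$ lies below every $b\in\beta$, since otherwise $a\,n\,b$ with $a>b$ is a $231$.

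With $\alpha$ below $\beta$, the values of $\alpha$ are $1..\ell$, so $\pi=\delta_\ell\oplus(n\beta)=\delta_\ell\oplus\delta_{k+1}$. For this to avoid $123$, no extra condition is needed, because each block is decreasing. Uniqueness follows from the position of the maximum.

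For $\Ctwo$, step 3 was likewise too strict: an $a<b$ gives $a\,n\,b$, which is $132$-type and not $123$. The correct derivation is as follows.
\begin{itemize}
\item Splitting the decreasing sequence $\beta$ into entries above $\alpha$ and entries below $\alpha$ yields the blocks $\delta_{j-1}$ and $\delta_i$. This uses the fact that an entry of $\beta$ between two values of $\alpha$ together with the lower value of $\alpha$... I expect that interleaving check to be the delicate step.
\item For the interleaving check I would try to show directly that $\alpha$'s values form an interval. If $b\in\beta$ has $a_1>b>a_2$ with $a_1$ before $a_2$ in $\alpha$, then $a_1\,a_2\,b$ is a $312$. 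So this is excluded.
\item The $\delta_j$ block is then $n$ followed by the part of $\beta$ above $\alpha$, giving the form $231[\delta_\ell,\delta_j,\delta_i]$ with $k=i+j-1$ as stated.
\end{itemize}
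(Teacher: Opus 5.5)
Once you strike out the retracted material, your argument is correct and essentially the paper's. In both classes, $123$ and $312$ make the parts before and after the maximum decreasing. In $\Cthree$, $231$ forces every entry before the maximum below every entry after it. In $\Ctwo$, $312$ forbids an entry after the maximum from lying strictly between two entries before it. Finally, $(k,\ell)$ is read off from the position of the maximum.

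You should delete the false starts rather than leave them beside their corrections. Step~3, the resulting existence claim with $j=1$, $i=k$, and the conclusion that $\alpha$ or $\beta$ is empty in $\Cthree$ are all wrong. The reason is that $a\,n\,b$ with $a<b$ is an occurrence of $132$, not $123$, and $132$ lies in both classes.
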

\begin{proof}
Let $M$ be the maximum. In either class, the entries before $M$ are
decreasing, since an ascent followed by $M$ would form $123$, and the
entries after $M$ are decreasing, since $M$ followed by an ascent would
form $312$.

For $\Cthree$, an entry $a$ before $M$ and an entry $b$ after $M$ must
satisfy $a<b$, since otherwise $aMb$ forms $231$. This gives
\eqref{eq:Cthree-blocks}, and the converse is immediate.

For $\Ctwo$, no entry after $M$ can lie strictly between two entries before
$M$: those two entries, in decreasing order, followed by the intermediate
entry would form $312$. Hence the decreasing segment after $M$ consists
first of entries above all entries preceding $M$, and then of entries
below all of them. This is precisely \eqref{eq:Ctwo-blocks}. The
assertions about $k$ and $\ell$ follow by locating $M$, the first entry of
the middle block.
\end{proof}

For $\pi\in\D^{(j)}$, let $\sigma^{(j)}(\pi)$ be its longest terminal segment
whose standardisation belongs to $\C^{(j)}$. Define the state
$(k^{(j)}(\pi),\ell^{(j)}(\pi))$ by Lemma~\ref{lem:container-structure}, and
\[
 B_n^{(j)}(x,y)=\sum_{\pi\in\D^{(j)}\cap \Sym_n}
 x^{k^{(j)}(\pi)}y^{\ell^{(j)}(\pi)}.
\]
Then $B_n^{(j)}(1,1)=d_n^{(j)}$ and $B_1^{(j)}=1$.

For $\pi\in\D^{(j)}$, encode the multiset of states of the permutations
obtained by inserting a new maximum into $\pi$ by the
polynomial
\begin{equation}\label{eq:child-inventory-def}
 \mathcal I_\pi^{(j)}(x,y)
  =\sum_{\tau\text{ child of }\pi}
 x^{k^{(j)}(\tau)}y^{\ell^{(j)}(\tau)}.
\end{equation}

\begin{proposition}\label{prop:inventories}
If $\pi\in\Dthree$ has state
$(k^{(3)}(\pi),\ell^{(3)}(\pi))=(k,\ell)$, then
\begin{align}
 \mathcal I_\pi^{(3)}(x,y)
 &=y^{k+1}+\sum_{h=1}^k x^h+x^{k+1}y^\ell
   +\sum_{h=1}^\ell x^ky^h.
 \label{eq:inventory-three}
\end{align}
If $\pi\in\Dtwo$ has state
$(k^{(2)}(\pi),\ell^{(2)}(\pi))=(k,\ell)$, then
\begin{align}
 \mathcal I_\pi^{(2)}(x,y)
 &=y^{k+1}+\sum_{h=1}^k x^hy^{k+1-h}+x^{k+1}y^\ell
   +\sum_{h=1}^\ell x^ky^h.
 \label{eq:inventory-two}
\end{align}
In particular, either expression 
depends on $\pi$ only through its
state, so defines a succession rule.
\end{proposition}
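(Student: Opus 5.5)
The plan is to prove both formulas by one argument, splitting according to where the new maximum is inserted. The first step is a membership criterion for children. Write $\tau$ for the permutation obtained by inserting $n+1$ into $\pi\in\D^{(j)}$. Every pattern in the basis of $\D^{(j)}$ begins with $4$, and $\pi$ avoids all of them. So an occurrence in $\tau$ must use $n+1$, and $n+1$ can only play the role of the leading $4$.

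Removing the leading $4$ from $4123,4231,4312$ gives $123,231,312$, and removing it from $4123,4312$ gives $123,312$. Hence $\tau\in\D^{(j)}$ if and only if the entries after $n+1$ standardise into $\C^{(j)}$. Since $\C^{(j)}$ is closed under taking terminal segments, these entries form a terminal segment of $\pi$ lying in $\C^{(j)}$. By maximality of $\sigma=\sigma^{(j)}(\pi)$, this segment is a terminal segment of $\sigma$. Thus the children correspond bijectively to the $|\sigma|+1=k+\ell+2$ insertion positions, indexed by the number $h\in\{0,\dots,k+\ell+1\}$ of entries following $n+1$. This matches the $k+\ell+2$ monomials in \eqref{eq:inventory-three} and \eqref{eq:inventory-two}.

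Next I will compute the state of $\tau$ case by case, using Lemma~\ref{lem:container-structure}. Write $\sigma=A\,M\,T$, where $A$ is the decreasing block of $\ell$ entries before the maximum $M$ of $\sigma$ and $T$ is the decreasing block of $k$ entries after it. The proof of Lemma~\ref{lem:container-structure} gives the basic tools: in both containers, the entries before the maximum and the entries after it are each decreasing.

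\emph{Case $0\le h\le k$.} The entries after $n+1$ are the last $h$ entries of $T$, which are decreasing, so $n+1$ lies in $\sigma(\tau)$ as its maximum and $k(\tau)=h$. The candidates for entries before $n+1$ are the remaining $k-h+1$ entries of $MT$, read backwards, followed by the entry of $\pi$ preceding $M$. That last entry creates an ascent into $M$, so extension stops at $M$.
\begin{itemize}
\item For $h=0$, the suffix $MT$ followed by $n+1$ is decreasing then maximal, which lies in both containers. This gives $\ell(\tau)=k+1$ and the monomial $y^{k+1}$.
\item For $1\le h\le k$ in $\Cthree$, every entry before the maximum must be smaller than every entry after it. The entry just before $n+1$ is larger than the last $h$ entries of $T$, so $\ell(\tau)=0$, giving the monomial $x^h$.
\item For $1\le h\le k$ in $\Ctwo$, the entries before $n+1$ are all above the entries after it, which is allowed by \eqref{eq:Ctwo-blocks}. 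So all $k+1-h$ of them can be kept, giving the monomial $x^hy^{k+1-h}$.
\end{itemize}

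\emph{Case $h=k+1$.} Here $n+1$ sits immediately before $M$. For a segment of the form $a_1\cdots a_r\,(n+1)\,M\,T$, the defining conditions of $\C^{(j)}$ are the following: the $a_i$ are decreasing, and they relate to the block $MT$ in the prescribed way (all smaller for $\Cthree$, never strictly between two $a$'s for $\Ctwo$). These are exactly the conditions for $a_1\cdots a_r\,M\,T$ to lie in $\C^{(j)}$ with maximum $M$. Hence maximality of $\sigma$ gives $\ell(\tau)=\ell$, and the monomial is $x^{k+1}y^\ell$.

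\emph{Case $h=k+1+m$ with $1\le m\le\ell$.} The entries after $n+1$ are the last $m$ entries of $A$, then $M$, then $T$. Then $n+1$, an entry of $A$, and $M$ form a $312$, so $n+1\notin\sigma(\tau)$. The longest $\C^{(j)}$-suffix of $\tau$ is therefore the whole segment after $n+1$, since it cannot be extended past $n+1$. Its maximum is $M$, with $k$ entries after it and $m$ before it, giving the monomial $x^ky^m$.

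Summing over all cases yields \eqref{eq:inventory-three} and \eqref{eq:inventory-two}. Each depends only on $(k,\ell)$, so each defines a succession rule.

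I expect the main obstacle to be the careful maximality claims: showing that $\sigma(\tau)$ is exactly the segment identified in each case, and that it does not extend further. This is delicate in the $\Ctwo$ case, where the block decomposition \eqref{eq:Ctwo-blocks} is not unique. I would handle it by always working with the intrinsic characterisation: decreasing before and after the maximum, plus the betweenness (for $\Ctwo$) or comparison (for $\Cthree$) condition. I would not use a particular block expression.
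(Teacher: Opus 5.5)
Your proof is correct and is essentially the paper's argument: because every basis element begins with its maximum, the children correspond to the $k+\ell+2$ gaps of the distinguished suffix, and you sort these into the same four families with the same resulting states. Two small justifications need tightening:
\begin{itemize}
\item In the case $h=k+1$, your claim that the conditions are ``exactly'' those for $a_1\cdots a_r\,M\,T$ is not literally true for $\Ctwo$, since the version containing $n+1$ does not force $a_i<M$. The direction you actually need follows at once by deleting $n+1$ and using closure of $\C^{(j)}$ under patterns, which is the paper's argument.
\item When $\ell=0$, the ascent $p<M$ into $M$ needs a one-line justification: otherwise $pMT$ would be decreasing, contradicting the maximality of $\sigma$.
\end{itemize}
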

\begin{proof}
Let $\pi$ have size $n$ and belong to the class under consideration.
Insert the new maximum $n+1$ into $\pi$. Any new
basis occurrence must use $n+1$. Since the maximum is first in every
basis permutation, an insertion is legal precisely when the entries to
its right lie in $\C^{(j)}$. Thus the legal sites are the $k+\ell+2$ gaps in
and at the ends of the distinguished suffix.

The four families are listed below; $h$ counts the
old entries left to the right of the insertion site.
\[
\begin{array}{c|c|c}
\text{site}&\text{new state in }\Dthree&\text{new state in }\Dtwo\\ \hline
\text{after the suffix}&(0,k+1)&(0,k+1)\\
\text{within the post-maximum part}&(h,0)&(h,k+1-h),\ 1\leq h\leq k\\
\text{immediately before the old maximum}&(k+1,\ell)&(k+1,\ell)\\
\text{within the pre-maximum part}&(k,h)&(k,h),\ 1\leq h\leq\ell.
\end{array}
\]

Figures~\ref{fig:three-succession} and \ref{fig:two-succession} show the
same four families at the level of permutation plots. A red point denotes
the newly inserted maximum when it belongs to the new distinguished
suffix; if it does not, only that suffix is drawn.

\begin{figure}[ht]
\centering
\begin{tikzpicture}[scale=0.28,baseline=(current bounding box.center)]
  \draw[gray!55,very thin] (0,0) grid (6,6);
  \foreach[count=\i] \j in {3,2,1,6,5,4}
    \node[perm dot] at (\i-.5,\j-.5) {};
  \draw[decorate,decoration=brace] (3,-.55)--node[below=2pt] {$\ell$}(0,-.55);
  \draw[decorate,decoration=brace] (6,-.55)--node[below=2pt] {$k+1$}(3,-.55);
\end{tikzpicture}
\quad $\longrightarrow$ \quad
\begin{tabular}{c@{\qquad}c@{\qquad}c@{\qquad}c}
\begin{tikzpicture}[scale=0.28,baseline=(current bounding box.center)]
  \draw[gray!55,very thin] (0,0) grid (4,4);
  \foreach \i/\j in {1/3,2/2,3/1}
    \node[perm dot] at (\i-.5,\j-.5) {};
  \node[new dot] at (3.5,3.5) {};
  \draw[decorate,decoration=brace] (3,-.55)--node[below=2pt] {$k+1$}(0,-.55);
\end{tikzpicture}
&
\begin{tikzpicture}[scale=0.28,baseline=(current bounding box.center)]
  \draw[gray!55,very thin] (0,0) grid (3,3);
  \node[new dot] at (.5,2.5) {};
  \foreach \i/\j in {2/2,3/1}
    \node[perm dot] at (\i-.5,\j-.5) {};
  \draw[decorate,decoration=brace] (3,-.55)--node[below=2pt] {$h+1$}(0,-.55);
\end{tikzpicture}
&
\begin{tikzpicture}[scale=0.28,baseline=(current bounding box.center)]
  \draw[gray!55,very thin] (0,0) grid (7,7);
  \foreach \i/\j in {1/3,2/2,3/1,5/6,6/5,7/4}
    \node[perm dot] at (\i-.5,\j-.5) {};
  \node[new dot] at (3.5,6.5) {};
  \draw[decorate,decoration=brace] (3,-.55)--node[below=2pt] {$\ell$}(0,-.55);
  \draw[decorate,decoration=brace] (7,-.55)--node[below=2pt] {$k+2$}(3,-.55);
\end{tikzpicture}
&
\begin{tikzpicture}[scale=0.28,baseline=(current bounding box.center)]
  \draw[gray!55,very thin] (0,0) grid (5,5);
  \foreach[count=\i] \j in {2,1,5,4,3}
    \node[perm dot] at (\i-.5,\j-.5) {};
  \draw[decorate,decoration=brace] (2,-.55)--node[below=2pt] {$h$}(0,-.55);
  \draw[decorate,decoration=brace] (5,-.55)--node[below=2pt] {$k+1$}(2,-.55);
\end{tikzpicture}
\\[-1pt]
$(0,k+1)$ & $(h,0)$ & $(k+1,\ell)$ & $(k,h)$
\end{tabular}
\caption{The four insertion families for $\Dthree$. The parent suffix has
the two-block form $\delta_\ell\oplus\delta_{k+1}$; the braces in the
children record block sizes.}
\label{fig:three-succession}
\end{figure}

\begin{figure}[ht]
\centering
\begin{tikzpicture}[scale=0.25,baseline=(current bounding box.center)]
  \draw[gray!55,very thin] (0,0) grid (7,7);
  \foreach \i/\j in {1/4,2/3,3/7,4/6,5/5,6/2,7/1}
    \node[perm dot] at (\i-.5,\j-.5) {};
  \draw[decorate,decoration=brace] (2,-.55)--node[below=2pt] {$\ell$}(0,-.55);
  \draw[decorate,decoration=brace] (7,-.55)--node[below=2pt] {$k+1$}(2,-.55);
\end{tikzpicture}
\quad $\longrightarrow$ \quad
\begin{tabular}{c@{\qquad}c}
\begin{tikzpicture}[scale=0.25,baseline=(current bounding box.center)]
  \draw[gray!55,very thin] (0,0) grid (6,6);
  \foreach[count=\i] \j in {5,4,3,2,1}
    \node[perm dot] at (\i-.5,\j-.5) {};
  \node[new dot] at (5.5,5.5) {};
  \draw[decorate,decoration=brace] (5,-.55)--node[below=2pt] {$k+1$}(0,-.55);
\end{tikzpicture}
&
\begin{tikzpicture}[scale=0.25,baseline=(current bounding box.center)]
  \draw[gray!55,very thin] (0,0) grid (6,6);
  \foreach \i/\j in {1/5,2/4,3/3,5/2,6/1}
    \node[perm dot] at (\i-.5,\j-.5) {};
  \node[new dot] at (3.5,5.5) {};
  \draw[decorate,decoration=brace] (3,-.55)--node[below=2pt,xshift=-4pt] {$k+1-h$}(0,-.55);
  \draw[decorate,decoration=brace] (6,-.55)--node[below=2pt,xshift=4pt] {$h$}(4,-.55);
\end{tikzpicture}
\\[-1pt]
$(0,k+1)$ & $(h,k+1-h)$
\\[5pt]
\begin{tikzpicture}[scale=0.25,baseline=(current bounding box.center)]
  \draw[gray!55,very thin] (0,0) grid (8,8);
  \foreach \i/\j in {1/4,2/3,4/7,5/6,6/5,7/2,8/1}
    \node[perm dot] at (\i-.5,\j-.5) {};
  \node[new dot] at (2.5,7.5) {};
  \draw[decorate,decoration=brace] (2,-.55)--node[below=2pt] {$\ell$}(0,-.55);
  \draw[decorate,decoration=brace] (8,-.55)--node[below=2pt] {$k+2$}(2,-.55);
\end{tikzpicture}
&
\begin{tikzpicture}[scale=0.25,baseline=(current bounding box.center)]
  \draw[gray!55,very thin] (0,0) grid (7,7);
  \foreach \i/\j in {1/4,2/3,3/7,4/6,5/5,6/2,7/1}
    \node[perm dot] at (\i-.5,\j-.5) {};
  \draw[decorate,decoration=brace] (2,-.55)--node[below=2pt,xshift=-2pt] {$h$}(0,-.55);
  \draw[decorate,decoration=brace] (7,-.55)--node[below=2pt,xshift=2pt] {$k+1$}(2,-.55);
\end{tikzpicture}
\\[-1pt]
$(k+1,\ell)$ & $(k,h)$
\end{tabular}
\caption{The corresponding insertion families for $\Dtwo$. The parent is
of the form $231[\delta_\ell,\delta_j,\delta_i]$, with $k=i+j-1$.
The upper-right plot displays the mixed state $(h,k+1-h)$ responsible for
the difference between the two succession rules.}
\label{fig:two-succession}
\end{figure}

The rightmost site produces state $(0,k+1)$, and the site immediately
before the old maximum produces $(k+1,\ell)$. A site in or immediately
before the $\ell$ entries preceding the old maximum, leaving $h$ of them
to its right, produces $(k,h)$ for $1\leq h\leq\ell$. These three common
families give the first, third and fourth terms of
\eqref{eq:inventory-three} and \eqref{eq:inventory-two}.

Only insertion within the $k$ entries after the old maximum differs. If
$h$ entries remain to the right, the new distinguished $\Cthree$-suffix
starts at the new maximum and has state $(h,0)$. In $\Ctwo$, the same
suffix may retain the entries between the old and new maxima; its state is
$(h,k+1-h)$. These are respectively the second terms of
\eqref{eq:inventory-three} and \eqref{eq:inventory-two}. The maximality
of the old suffix, or deletion of the newly inserted maximum, shows that
none of the asserted new suffixes can extend farther to the left.
\end{proof}

\begin{corollary}\label{cor:B-recurrences}
For $n\geq1$,
\begin{align*}
 B_{n+1}^{(3)}(x,y)
 ={}&\left(x-\frac{y}{1-y}\right)B_n^{(3)}(x,y)
 -\frac{x-y}{(1-x)(1-y)}B_n^{(3)}(x,1)\notag\\
 &{}+yB_n^{(3)}(y,1)+\frac{x}{1-x}B_n^{(3)}(1,1),
 \\
 B_{n+1}^{(2)}(x,y)
 ={}&\left(x-\frac{y}{1-y}\right)B_n^{(2)}(x,y)\notag\\
 &{}+y\left(\frac1{1-y}+\frac{x}{x-y}\right)B_n^{(2)}(x,1)
 -\frac{y^2}{x-y}B_n^{(2)}(y,1).
\end{align*}
\end{corollary}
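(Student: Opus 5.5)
The plan is to sum the inventories of Proposition~\ref{prop:inventories} over all $\pi\in\D^{(j)}\cap\Sym_n$. Every permutation of size $n+1$ in the class has a unique parent, so
\[
 B_{n+1}^{(j)}(x,y)=\sum_{\pi\in\D^{(j)}\cap\Sym_n}\mathcal I^{(j)}_\pi(x,y).
\]
I will therefore rewrite each inventory \eqref{eq:inventory-three}, \eqref{eq:inventory-two} in closed form as a linear combination of the monomials $x^ky^\ell$, $x^k$, $y^k$ and $1$, with coefficients rational in $x,y$. Summing over $\pi$ then turns these monomials into $B_n(x,y)$, $B_n(x,1)$, $B_n(y,1)$ and $B_n(1,1)$ respectively, since $B_n(x,1)=\sum x^{k}$, $B_n(y,1)=\sum y^{k}$ and $B_n(1,1)=\sum 1$.

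For $\Dthree$, I evaluate the geometric sums:
\[
 \sum_{h=1}^k x^h=\frac{x-x^{k+1}}{1-x},
 \qquad
 \sum_{h=1}^\ell x^ky^h=x^k\,\frac{y-y^{\ell+1}}{1-y}.
\]
The inventory then becomes
\[
 y\cdot y^k+\frac{x}{1-x}-\frac{x}{1-x}x^k+x\,x^ky^\ell+\frac{y}{1-y}x^k-\frac{y}{1-y}x^ky^\ell.
\]
Collecting terms, the coefficient of $x^ky^\ell$ is $x-\frac{y}{1-y}$. The coefficient of $x^k$ is $\frac{y}{1-y}-\frac{x}{1-x}$, which simplifies to $-\frac{x-y}{(1-x)(1-y)}$. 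The coefficient of $y^k$ is $y$, and the constant term is $\frac{x}{1-x}$. These match the first recurrence.

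For $\Dtwo$, the only new ingredient is the mixed sum
\[
 \sum_{h=1}^k x^hy^{k+1-h}=y\,\frac{x^{k+1}-x y^{k}}{x-y}\cdot\frac{1}{1}\cdot\frac1{1}\cdot\ ,
\]
which, written carefully as a geometric series with ratio $x/y$, equals $\frac{xy}{x-y}(x^k-y^k)$. Adding $y^{k+1}$, the $y^k$ coefficient becomes
\[
 y-\frac{xy}{x-y}=-\frac{y^2}{x-y}.
\]
The $x^k$ coefficient becomes
\[
 \frac{xy}{x-y}+\frac{y}{1-y}=y\Bigl(\frac1{1-y}+\frac{x}{x-y}\Bigr),
\]
and the $x^ky^\ell$ coefficient is again $x-\frac{y}{1-y}$. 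There is no constant term, which matches the second recurrence.

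The only point needing care is edge cases: $k=0$ or $\ell=0$ give empty sums, and the closed forms still give $0$ there. The identities are identities of rational functions; after multiplying through they are polynomial identities, so the singular denominators $1-x$, $1-y$, $x-y$ are harmless. I expect no real obstacle beyond the bookkeeping of the mixed geometric sum in the $\Dtwo$ case.
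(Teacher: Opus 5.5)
Your proposal is correct and is exactly the paper's argument: sum the inventories of Proposition~\ref{prop:inventories} over $\pi\in\D^{(j)}\cap\Sym_n$ and collect the monomials $x^ky^\ell$, $x^k$, $y^k$, $1$, and you correctly carry out the geometric-sum bookkeeping (including the empty-sum edge cases) that the paper leaves implicit. The only blemish is the garbled first display for the mixed sum; its meaningful part $y(x^{k+1}-xy^k)/(x-y)$ already equals the correct value $\frac{xy}{x-y}(x^k-y^k)$ that you then use.
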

\begin{proof}
  Follows immediately from Proposition~\ref{prop:inventories}
  by summing \eqref{eq:child-inventory-def} over $\pi\in \D^{(j)}\cap \Sym_n$.
\end{proof}

Starting from $B_1^{(j)}=1$, the first three state polynomials are
\begin{align*}
  &B_1^{(3)}=B_1^{(2)}=1
  \\
 &B_2^{(3)}=B_2^{(2)}=x+y,\\
 &B_3^{(3)}=x^2+xy+y^2+x+2y,
 &B_3^{(2)}&=x^2+2xy+y^2+2y.
\end{align*}

\subsection{The common affine kernel}\label{sec:common-kernel}
For $j\in\{2,3\}$ define
\begin{equation*}\label{eq:specialization-hierarchy}
 \mathbb G^{(j)}(z;x,y)=\sum_{n\geq1}B_n^{(j)}(x,y)z^{n-1},
\quad
 G^{(j)}(z;x)=\mathbb G^{(j)}(z;x,1),
\quad
 g^{(j)}(z)=G^{(j)}(z;1)=\mathbb G^{(j)}(z;1,1).
\end{equation*}
Then
\begin{equation}\label{eq:Dj-from-gj}
 D^{(j)}(z)=1+zg^{(j)}(z).
\end{equation}

\begin{proposition}\label{prop:two-functional-equations}
The trivariate three-pattern series $\mathbb G^{(3)}$ satisfies the functional equation
\begin{align}
 &\bigl((1-y)(1-zx)+zy\bigr)\mathbb G^{(3)}(z;x,y)\notag\\
 &\quad=(1-y)-\frac{z(x-y)}{1-x}G^{(3)}(z;x)
 +zy(1-y)G^{(3)}(z;y)+\frac{zx(1-y)}{1-x}g^{(3)}(z),
 \label{eq:mathbbG-three-functional}
\end{align}
whereas the two-pattern series $\mathbb G^{(2)}$ satisfies
\begin{align}
 &\bigl((1-y)(1-zx)+zy\bigr)\mathbb G^{(2)}(z;x,y)\notag\\
 &\quad=(1-y)+\frac{zy(2x-y-xy)}{x-y}G^{(2)}(z;x)
 -\frac{zy^2(1-y)}{x-y}G^{(2)}(z;y).
 \label{eq:mathbbG-two-functional}
\end{align}
Both are identities of formal power series in $z$ with polynomial
coefficients after removable factors are cancelled.
\end{proposition}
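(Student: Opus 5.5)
The plan is to multiply the recurrences of Corollary~\ref{cor:B-recurrences} by $z^{n}$, sum over $n\geq1$, and clear the denominator $1-y$. Since $\sum_{n\geq1}B_{n+1}^{(j)}z^{n}=\mathbb G^{(j)}(z;x,y)-B_1^{(j)}=\mathbb G^{(j)}-1$, and $\sum_{n\geq1}B_n^{(j)}(u,1)z^n=zG^{(j)}(z;u)$, each recurrence becomes a linear relation among $\mathbb G^{(j)}(z;x,y)$, $G^{(j)}(z;x)$, $G^{(j)}(z;y)$ and (in the three-pattern case) $g^{(j)}(z)$. In this relation the coefficient of $\mathbb G$ is $1-z\bigl(x-\tfrac{y}{1-y}\bigr)$. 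Multiplying through by $1-y$ turns it into $(1-y)-zx(1-y)+zy=(1-y)(1-zx)+zy$, which is the common kernel.

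For $j=3$ the steps are as follows. The constant $1$ becomes $1-y$. The term $-\frac{x-y}{(1-x)(1-y)}B_n(x,1)$ becomes $-\frac{z(x-y)}{1-x}G^{(3)}(z;x)$. The term $yB_n(y,1)$ becomes $zy(1-y)G^{(3)}(z;y)$. Finally, $\frac{x}{1-x}B_n(1,1)$ becomes $\frac{zx(1-y)}{1-x}g^{(3)}(z)$. This is exactly \eqref{eq:mathbbG-three-functional}.

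For $j=2$, multiplying by $1-y$ gives the coefficient of $zG^{(2)}(z;x)$ as
\[
y\Bigl(1+\tfrac{x(1-y)}{x-y}\Bigr)=\tfrac{y(2x-y-xy)}{x-y}.
\]
The coefficient of $zG^{(2)}(z;y)$ is $-\tfrac{y^2(1-y)}{x-y}$. This matches \eqref{eq:mathbbG-two-functional}.

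The only real point is the final sentence of the proposition: that these are genuine identities of formal power series in $z$ whose coefficients are polynomials in $x,y$ once removable factors are cancelled. The apparent poles come from $1-x$ and $x-y$. I would settle this at the level of the recurrences, before summing.

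For $j=3$, I would group the two $(1-x)^{-1}$ terms as $\frac{x(1-y)B_n(1,1)-(x-y)B_n(x,1)}{1-x}$. At $x=1$ the numerator is $(1-y)B_n(1,1)-(1-y)B_n(1,1)=0$, so $1-x$ divides it in $\mathbb Z[x,y]$.

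For $j=2$, I would group the $(x-y)^{-1}$ terms as $\frac{y\,[\,x(1-y)B_n(x,1)-y(1-y)B_n(y,1)\,]}{x-y}$. At $x=y$ the numerator vanishes, so this is again a polynomial.

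Equivalently, each coefficient of $z^{n}$ on the right-hand side is the polynomial $(1-y)B^{(j)}_{n+1}-(\ldots)$ given directly by summing the inventories of Proposition~\ref{prop:inventories}. Those inventories are manifestly polynomials, the geometric sums $\sum_{h=1}^k x^h$ and $\sum_{h=1}^\ell x^k y^h$ being the source of the rational forms. Summing coefficientwise in $z$ then gives the identities in $\mathbb Z[x,y][[z]]$.
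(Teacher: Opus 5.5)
Your proposal is correct and is essentially the paper's proof: you sum the recurrences of Corollary~\ref{cor:B-recurrences} against $z^n$, use $B_1^{(j)}=1$ to obtain $\mathbb G^{(j)}=1+z\mathcal L^{(j)}\mathbb G^{(j)}$, then multiply by $1-y$ and collect terms. Your explicit check that the grouped numerators vanish at $x=1$ (for $j=3$) and at $x=y$ (for $j=2$) usefully justifies the final sentence about removable factors, which the paper leaves implicit.
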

\begin{proof}
Write each recurrence in Corollary~\ref{cor:B-recurrences} as
$B_{n+1}^{(j)}=\mathcal L^{(j)}B_n^{(j)}$. Since $B_1^{(j)}=1$, summation gives
$\mathbb G^{(j)}=1+z\mathcal L^{(j)}\mathbb G^{(j)}$. Multiplication by
$1-y$ and collection of terms gives
\eqref{eq:mathbbG-three-functional} and
\eqref{eq:mathbbG-two-functional}.
\end{proof}

\eqref{eq:mathbbG-three-functional} and \eqref{eq:mathbbG-two-functional} have the same kernel,
i.e., the same coefficient in front of $\mathbb G^{(j)}(z;x,y)$.
This common kernel is affine in $y$, with root
\begin{equation}\label{eq:F-def}
 Y(x)=\frac{1-zx}{1-z-zx}.
\end{equation}
The dependence on $z$ is suppressed from the notation; the
expression above shows that
\[
 Y(x)\in\mathbb Q[x][[z]],
 \qquad Y(x)=1+z+O(z^2).
\]

\begin{proposition}\label{prop:kernel-equations}
Kernel cancellation in the three-pattern equation gives
\begin{align}
0={}&\bigl(z(x+1)-1\bigr)
 \bigl(1-x+zxg^{(3)}(z)\bigr)\notag\\
 &{}-(zx^2-x+1)\bigl(z(x+1)-1\bigr)G^{(3)}(z;x)\notag\\
 &{}+z(1-x)(zx-1)G^{(3)}(z;Y(x)).
\label{eq:kernel-three}
\end{align}
For the two-pattern equation it gives
\begin{align}
0={}&\bigl(z(x+1)-1\bigr)(zx^2-x+1)\notag\\
 &{}+(zx-1)\bigl(z(x+1)-1\bigr)
 \bigl(zx^2+(z-1)x+1\bigr)G^{(2)}(z;x)\notag\\
 &{}-z(zx-1)^2G^{(2)}(z;Y(x)).
\label{eq:kernel-two}
\end{align}
\end{proposition}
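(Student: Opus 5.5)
We apply the kernel method to the functional equations of Proposition~\ref{prop:two-functional-equations}. The kernel $K(x,y)=(1-y)(1-zx)+zy$ is affine in $y$ with constant term $1-zx$ at $y=0$, and $Y(x)=(1-zx)/(1-z-zx)$ is its unique root. Since $Y(x)=1+z+O(z^2)$ lies in $\mathbb Q[x][[z]]$, the substitution $y=Y(x)$ is a legitimate operation on formal power series in $z$. The series $\mathbb G^{(j)}(z;x,y)$ has coefficients of $z^{n-1}$ that are polynomials in $x,y$, namely $B_n^{(j)}$. Its coefficient of $z^m$ after substitution involves only finitely many terms, because $Y-1=O(z)$ and $\deg B_n\le n-1$. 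So $\mathbb G^{(j)}(z;x,Y(x))$ is a well-defined element of $\mathbb Q[x][[z]]$. The same reasoning applies to $G^{(j)}(z;Y(x))$: each coefficient of $z^{n-1}$ in $G^{(j)}(z;y)$ is the polynomial $B_n^{(j)}(y,1)$, so the series composes with $Y(x)\in 1+z\mathbb Q[x][[z]]$.

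The first step is to clear the apparent denominators. In \eqref{eq:mathbbG-three-functional} the factor $1/(1-x)$ is removable in combination: $-(x-y)G(x)+x(1-y)g$ vanishes at $x=1$. I would therefore multiply through by $1-x$ so that both sides are genuine power series in $z$ with polynomial coefficients. For \eqref{eq:mathbbG-two-functional} I would multiply by $x-y$. The left side then becomes $K(x,y)(1-x)\mathbb G$ or $K(x,y)(x-y)\mathbb G$, and setting $y=Y(x)$ makes it vanish identically. This gives
\[
0=(1-x)(1-Y)-z(x-Y)G(x)+zY(1-Y)(1-x)G(Y)+zx(1-Y)g
\]
for the three-pattern case, and the analogous identity for the two-pattern case.

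The second step is pure algebra. Using $1-Y=-z/(1-z-zx)$ and $x-Y=(zx^2-x+1)/(zx+z-1)$ (up to sign conventions), we multiply by a suitable power of $1-z-zx=-(z(x+1)-1)$ to clear denominators. A common factor of $z$ is then cancelled. Concretely, $(1-Y)$ carries a factor of $z$ and $Y(1-Y)$ contributes $(1-zx)z/(1-z-zx)^2$. Multiplying by $(1-z-zx)^2/z$ and tracking signs should yield exactly \eqref{eq:kernel-three}. The factor $(zx-1)$ in front of $G(z;Y(x))$ comes from the numerator of $Y$. For \eqref{eq:kernel-two} the same procedure applies. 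Here $x-Y$ appears in the denominator, and $2x-Y-xY$ must be expanded. The quadratic $zx^2+(z-1)x+1$ should emerge from $2x-Y-xY$ times $(1-z-zx)$, and the factor $Y^2(1-Y)$ produces $(zx-1)^2$.

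The main obstacle is not conceptual but bookkeeping. It consists of verifying the exact polynomial prefactors and signs, and checking that the cancelled factor, such as $x-Y(x)$ or $1-x$, does not vanish as a formal series. Dividing by it is only legitimate if it is invertible or at least a non-zero-divisor in $\mathbb Q[x][[z]]$, and since $\mathbb Q[x][[z]]$ is an integral domain, nonzero suffices. I would confirm the final identities by checking low-order coefficients against $B_1,B_2,B_3$ computed above. For example, the $z^0$ term of \eqref{eq:kernel-three} reads $-(1-x)-(x^2-x+1)(-1)G_0$ with $G_0=1$. This does not vanish on its own, because the $G(z;Y(x))$ term also contributes $x-1$ in its $z^1$ coefficient. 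So the check has to be carried out carefully at each order to confirm the stated normalisation.
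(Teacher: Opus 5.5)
Your proposal is correct and follows the paper's proof. You substitute $y=Y(x)$ $z$-adically and multiply by $1-x$ (resp.\ $x-y$). You then use $1-Y=z/\bigl(z(x+1)-1\bigr)$, $x-Y=(zx^2-x+1)/\bigl(z(x+1)-1\bigr)$ and $Y=(zx-1)/\bigl(z(x+1)-1\bigr)$, and clear with $\bigl(z(x+1)-1\bigr)^2/z$ (resp.\ $\bigl(z(x+1)-1\bigr)^3/z$); this gives \eqref{eq:kernel-three} and \eqref{eq:kernel-two} exactly.

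No division by $1-x$ or $x-Y$ is ever needed, because those factors stay in the final identities. There is one slip in your closing sanity check: at $z=0$ the factor $zx^2-x+1$ equals $1-x$, not $x^2-x+1$, so the $z^0$ coefficient of \eqref{eq:kernel-three} is $-(1-x)+(1-x)=0$ on its own.
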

\begin{proof}
The substitution $y=Y(x)$ in $\mathbb G^{(j)}(z;x,y)$ makes sense
as a $z$-adic substitution: in
$\mathbb G^{(j)}=\sum_{n\geq1}B_n^{(j)}(x,y)z^{n-1}$, each
$B_n^{(j)}$ is a polynomial in $x,y$, and the $n$th summand after
substitution still has $z$-adic order at least $n-1$.

Applying it to the appropriate functional equation, and use of
\[
 1-Y(x)=\frac{z}{z(x+1)-1},
 \qquad
 x-Y(x)=\frac{zx^2-x+1}{z(x+1)-1}
\]
leads to the desired equations.
\end{proof}

The substitutions also have an analytic interpretation near the origin.
By the Marcus--Tardos theorem~\cite{MT}, $d_n^{(j)}\leq (M^{(j)})^n$ for some
$M^{(j)}$. Since every state has $k+\ell<n$, the trivariate series converges
absolutely when
\[
 M^{(j)}|z|\max(1,|x|,|y|)<1.
\]

Next, parameterise
\begin{equation}\label{eq:z-t-common}
 z=\frac{t}{(1+t)^2},
\end{equation}
using at the origin the inverse branch
\begin{equation}\label{eq:t-of-z}
 t=t(z)=\frac{1-2z-\sqrt{1-4z}}{2z}
 =z+2z^2+5z^3+14z^4+\cdots,
\end{equation}
and introduce
\begin{equation}\label{eq:w-change-common}
 w=w(x)=\frac{1+t-x}{1+t-tx},
 \qquad
 x=x(w)=\frac{(1+t)(1-w)}{1-tw}.
\end{equation}
A direct calculation gives
\begin{equation}\label{eq:common-conjugacy}
 w(Y(x))=m^2w(x).
\end{equation}
This is the common endpoint of the combinatorial part of the argument.
The remainder of the paper studies the two different $q$-difference
equations obtained from \eqref{eq:kernel-three} and
\eqref{eq:kernel-two} by the change of variables from $x$ to $w$
(here, $q=t^2$).

\section{The class $ \Av(4123,4231,4312) $}\label{sec:three-pattern}
\subsection{Solution of the $q$-difference equation}\label{sec:three-quotient}
We first apply the equations of the previous section to the calculation of $D^{(3)}$. Define
$\Phi_t(w)$ and $\widehat g^{(3)}(t)$ by
\begin{equation}\label{eq:three-Phi-def}
 G^{(3)}(z;x(w))=t^{-1}(1-tw)\Phi_t(w),
 \qquad
 \widehat g^{(3)}(t)=g^{(3)}(z(t)).
\end{equation}

\begin{proposition}\label{prop:three-q-equation}
  Equation \eqref{eq:kernel-three} is equivalent to
  the $q$-difference equation
\begin{align}
0={}&t^2\widehat g^{(3)}(t)(1-w)-t(1+t)(t-w)\notag\\
 &{}-(1-t)^2(1+t)w\,\Phi_t(w)\notag\\
 &{}-t(t-w)(1-t^2w)\Phi_t(t^2w).
\label{eq:three-q-equation}
\end{align}
\end{proposition}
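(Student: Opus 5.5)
The plan is to substitute the parametrisations \eqref{eq:z-t-common} and \eqref{eq:w-change-common} directly into \eqref{eq:kernel-three} and simplify until the equation takes the form \eqref{eq:three-q-equation}. First I rewrite each coefficient of \eqref{eq:kernel-three} as a rational function of $t$ and $w$. With $z=t/(1+t)^2$ and $x=(1+t)(1-w)/(1-tw)$, a short computation gives
\[
 1-x=\frac{w-t}{1-tw},\qquad
 zx-1=\frac{t^2w-1}{(1+t)(1-tw)},\qquad
 z(x+1)-1=\frac{t^3w-1}{(1+t)^2(1-tw)}.
\]
For the remaining quadratic coefficient I use the factorisation $zx^2-x+1=(z(x+1)-1)(x-Y(x))$ recorded in the proof of Proposition~\ref{prop:kernel-equations}. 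Together with $Y(x)=x(t^2w)$, which follows from \eqref{eq:common-conjugacy} with $m^2=q=t^2$, this gives $x-Y(x)=x(w)-x(t^2w)$, an explicit rational function whose numerator should be proportional to $(1-t)^2w$ (up to factors of $1+t$).

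Next I substitute the definition \eqref{eq:three-Phi-def} into both unknowns. This gives $G^{(3)}(z;x)=t^{-1}(1-tw)\Phi_t(w)$, and, because $x(Y)=x(t^2w)$, also $G^{(3)}(z;Y(x))=t^{-1}(1-t^3w)\Phi_t(t^2w)$. The factor $1-t^3w$ produced here is precisely the numerator of $z(x+1)-1$. This is why the ansatz \eqref{eq:three-Phi-def} was chosen, and I expect it to let the overall factor $z(x+1)-1$, present in the first two terms, be divided out of all three terms once everything is put over a common denominator. Concretely, I multiply \eqref{eq:kernel-three} by a suitable monomial in $(1+t)$, $(1-tw)$ and $t$, and divide by $(t^3w-1)$. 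The inhomogeneous term $(z(x+1)-1)(1-x+zxg^{(3)})$ then yields $t^2\widehat g^{(3)}(t)(1-w)$ and $-t(1+t)(t-w)$, using $1-x=(w-t)/(1-tw)$ and $zx=t(1-w)/((1+t)(1-tw))$. The $\Phi_t(w)$ term yields $-(1-t)^2(1+t)w\,\Phi_t(w)$, and the $\Phi_t(t^2w)$ term yields $-t(t-w)(1-t^2w)\Phi_t(t^2w)$ from the product $z(1-x)(zx-1)$.

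The only real obstacle is bookkeeping. I need the normalising prefactors to come out exactly as in \eqref{eq:three-q-equation}. I will check this by comparing the constant-in-$w$ or leading-order terms, or by substituting a numerical value, rather than by trusting the algebra blindly.

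For the word ``equivalent'' I also need the transformation to be reversible. Every factor I multiply or divide by ($t$, $1+t$, $1-tw$, $1-t^2w$, $1-t^3w$) is a nonzero element of $\mathbb Q(t,w)$. The changes of variables $z\leftrightarrow t$ near the origin via \eqref{eq:t-of-z}, and $x\leftrightarrow w$ via the Möbius map \eqref{eq:w-change-common}, are invertible. So one can run the computation backwards, and each step is an equivalence of identities in the appropriate ring of formal series. I would briefly justify that the substitutions are legitimate in $\mathbb Q(w)[[t]]$, in the same $t$-adic sense as the $z$-adic argument of Proposition~\ref{prop:kernel-equations}. As a final sanity check, I would verify the resulting equation at low order in $t$ against $B_1^{(3)},B_2^{(3)},B_3^{(3)}$.
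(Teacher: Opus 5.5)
Your proposal is correct and follows the same route as the paper: substitute the parametrisations and \eqref{eq:three-Phi-def}, use $Y(x(w))=x(t^2w)$ (the paper's $m^2$ is indeed $t^2$), and cancel the common factor coming from $z(x+1)-1$. Your intermediate formulas check out; in particular $zx^2-x+1=(1-t)^2w/(1-tw)^2$, and multiplying \eqref{eq:kernel-three} by $t(1+t)^3(1-tw)^2/(t^3w-1)$ gives \eqref{eq:three-q-equation} exactly.
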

\begin{proof}
Substitute \eqref{eq:z-t-common}, \eqref{eq:w-change-common}, and
\eqref{eq:three-Phi-def} into \eqref{eq:kernel-three}, and use
\eqref{eq:common-conjugacy}. Cancellation of the common rational factors
gives \eqref{eq:three-q-equation}.
\end{proof}

For coefficient extraction, replace $w$ by $t^{-2}w$ in
\eqref{eq:three-q-equation} and solve for $\Phi_t(w)$. This gives
\begin{equation}\label{eq:three-outward}
 \Phi_t(w)=\widehat g^{(3)}(t)\beta_t^{(3)}(w)+\gamma_t^{(3)}(w)
 +\alpha_t^{(3)}(w)\Phi_t(t^{-2}w),
\end{equation}
where
\begin{align*}
 \alpha_t^{(3)}(w)
 &=-\frac{(1-t)^2(1+t)w}{t(1-w)(t^3-w)},
\\
 \beta_t^{(3)}(w)
 &=\frac{t(t^2-w)}{(1-w)(t^3-w)},
\\
 \gamma_t^{(3)}(w)&=-\frac{1+t}{1-w}.
\end{align*}
Although multiplication of the argument by $t^{-2}$ moves it away from
the origin when $0<t<1$, the iteration is legitimate as an identity of
formal power series in $w$. Indeed,
\[
 \prod_{i=0}^{j-1}\alpha_t^{(3)}(t^{-2i}w)
 \in w^j\mathbb Q(t)[[w]],
\]
so after $N$ iterations the unexpanded remainder is divisible by $w^N$.
Thus the coefficient of $w^n$ has stabilised once $N>n$; the numerical
size of $t^{-2i}w$ is irrelevant to this coefficientwise argument. Define
the resulting formal series
\begin{align*}
 U_t(w)&=\sum_{j\geq0}\beta_t^{(3)}(t^{-2j}w)
 \prod_{i=0}^{j-1}\alpha_t^{(3)}(t^{-2i}w),
 \\
 V_t(w)&=\sum_{j\geq0}\gamma_t^{(3)}(t^{-2j}w)
 \prod_{i=0}^{j-1}\alpha_t^{(3)}(t^{-2i}w).
\end{align*}
For each coefficient of $w$, only finitely many summands contribute, and
iteration gives
\begin{equation}\label{eq:three-Phi-UV}
 \Phi_t(w)=\widehat g^{(3)}(t)U_t(w)+V_t(w).
\end{equation}
For $t\neq0$, define $R_k(t)$ and $S_k(t)$ by the scaled coefficient
expansions
\begin{align}
 U_t(w)&=\sum_{k\geq0}(-1)^k
 \frac{\bigl((1+t)(1-t)^2\bigr)^k}{t^{k(k+3)}}R_k(t)w^k,
 \notag\\
 V_t(w)&=\sum_{k\geq0}(-1)^k
 \frac{\bigl((1+t)(1-t)^2\bigr)^k}{t^{k(k+3)}}S_k(t)w^k.
 \label{eq:three-RS-coefficient-def}
\end{align}
The scaling is chosen so that the resulting recurrence approaches the
identity recurrence $X_k=X_{k-1}$ as $k\to\infty$.

\begin{proposition}\label{prop:three-scaled-recurrence}
The apparent negative powers of $t$ in
\eqref{eq:three-RS-coefficient-def} cancel. The initial values are
\begin{align}
 R_0(t)&=1,&
 R_1(t)&=\frac{1-2t+t^3-t^4}{(1+t)(1-t)^2},
 \label{eq:three-R-initial}\\
 S_0(t)&=-(1+t),&
 S_1(t)&=-\frac{1-t-t^2+t^3-t^4}{(1-t)^2}.
 \label{eq:three-S-initial}
\end{align}
For $X_k=R_k$ and for $X_k=S_k$,
\begin{equation}\label{eq:three-scaled-recurrence}
 X_k=\left(1-t^{2k-1}\frac{1-t+t^2}{(1-t)^2}\right)X_{k-1}
 -\frac{t^{4k-1}}{(1+t)^2(1-t)^4}X_{k-2},
 \qquad k\geq2.
\end{equation}
\end{proposition}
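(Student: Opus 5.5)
The plan is to read off a linear recurrence for the $w$-coefficients of $U_t$ and $V_t$ directly from the first-order functional equations they satisfy, and then check that the rescaling in \eqref{eq:three-RS-coefficient-def} turns it into \eqref{eq:three-scaled-recurrence}. By construction, and since the iteration converges coefficientwise as explained before the statement, $U_t$ and $V_t$ are the unique formal power series solutions of
\[
 U_t(w)=\beta_t^{(3)}(w)+\alpha_t^{(3)}(w)U_t(t^{-2}w),
 \qquad
 V_t(w)=\gamma_t^{(3)}(w)+\alpha_t^{(3)}(w)V_t(t^{-2}w).
\]
I will write $c=(1-t)^2(1+t)$ and clear the common denominator $(1-w)(t^3-w)=t^3-(1+t^3)w+w^2$. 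This gives
\begin{align*}
 (t^3-(1+t^3)w+w^2)\,U_t(w)&=t(t^2-w)-c\,t^{-1}w\,U_t(t^{-2}w),\\
 (t^3-(1+t^3)w+w^2)\,V_t(w)&=-(1+t)(t^3-w)-c\,t^{-1}w\,V_t(t^{-2}w).
\end{align*}

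Writing $U_t=\sum u_kw^k$, I extract the coefficient of $w^k$. The inhomogeneous terms are polynomials of degree at most $1$ in $w$, so for $k\ge2$ both series satisfy the same relation:
\[
 t^3u_k=\bigl(1+t^3-c\,t^{1-2k}\bigr)u_{k-1}-u_{k-2}.
\]
The cases $k=0$ and $k=1$ give $u_0,u_1$ (respectively $v_0,v_1$) explicitly. For instance $t^3u_0=t^3$, so $u_0=1$, and for $V$ we get $v_0=-(1+t)$. The $k=1$ coefficient is then solved using $u_0$. Converting these to $R_0,R_1,S_0,S_1$ through the scaling $u_k=(-1)^kc^kt^{-k(k+3)}R_k$ is a routine check against \eqref{eq:three-R-initial} and \eqref{eq:three-S-initial}.

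Next I substitute the scaling into the $k\ge2$ relation. The exponent differences are $k(k+3)-(k-1)(k+2)=2k+2$ and $k(k+3)-(k-2)(k+1)=4k+2$. Dividing through by $(-1)^kc^kt^{3-k(k+3)}$ gives
\[
 R_k=\Bigl(1-t^{2k-1}\frac{1+t^3}{c}\Bigr)R_{k-1}-\frac{t^{4k-1}}{c^2}R_{k-2}.
\]
Since $(1+t^3)/c=(1-t+t^2)/(1-t)^2$ and $c^2=(1+t)^2(1-t)^4$, this is exactly \eqref{eq:three-scaled-recurrence}. The same computation applies verbatim to $S_k$.

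The claim that the apparent negative powers of $t$ cancel then follows by induction on $k$. The initial values are rational functions of $t$ whose denominators are powers of $1\pm t$ only. For $k\ge2$ the recurrence coefficients involve only the nonnegative powers $t^{2k-1}$ and $t^{4k-1}$ and denominators that are powers of $1\pm t$. Hence every $R_k,S_k$ lies in $\mathbb Q[t][(1-t)^{-1},(1+t)^{-1}]$, which is regular at $t=0$.

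The only delicate point is the $k=1$ coefficient. There the inhomogeneous term, the term $-c\,t^{-1}u_0$ carrying a negative power of $t$, and the $-(1+t^3)u_0$ term all interact. The stated $R_1$ and $S_1$ must emerge, with the $t^{-4}$ scaling absorbing the negative powers. I would verify this case by direct expansion.
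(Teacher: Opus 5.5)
Your proposal is correct and follows the paper's proof: take the first-order functional equations for $U_t,V_t$, insert the scaled coefficient expansions, and compare coefficients at orders $1$, $w$ and $w^k$, with regularity at $t=0$ read off from the initial values and the recurrence. The $k=1$ check you deferred does go through: with $c=1-t-t^2+t^3$ one gets $t^4u_1=-(1-2t+t^3-t^4)$ and $t^4v_1=(1+t)(1-t-t^2+t^3-t^4)$, which give exactly the stated $R_1$ and $S_1$.
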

\begin{proof}
Separating the coefficient of $\widehat g^{(3)}(t)$ from the constant
term in \eqref{eq:three-outward} gives
\begin{align*}
 U_t(w)&=\beta_t^{(3)}(w)
 +\alpha_t^{(3)}(w)U_t(t^{-2}w),\\
 V_t(w)&=\gamma_t^{(3)}(w)
 +\alpha_t^{(3)}(w)V_t(t^{-2}w).
\end{align*}
Insert \eqref{eq:three-RS-coefficient-def} into these two equations.
Comparison at orders $1$ and $w$ gives
\eqref{eq:three-R-initial}--\eqref{eq:three-S-initial}; comparison at
order $w^k$, followed by cancellation of the common scaling factor,
gives \eqref{eq:three-scaled-recurrence}. The displayed formulas and
the recurrence also show that no negative powers of $t$ remain.
\end{proof}

\begin{proposition}\label{prop:three-wronskian}
For $k\geq1$, the discrete Wronskian
\[
  W_k(t):=R_k(t)S_{k-1}(t)-R_{k-1}(t)S_k(t)
\]
is given by
\begin{equation}
 W_k(t)=\frac{t^{2k^2+k-2}}
 {(1+t)^{2k-2}(1-t)^{4k-3}}.
\label{eq:three-wronskian}
\end{equation}
\end{proposition}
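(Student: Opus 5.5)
Since $R_k$ and $S_k$ satisfy the same second-order recurrence \eqref{eq:three-scaled-recurrence} for $k\geq2$, the plan is the standard Abel/Casoratian argument. Write the recurrence as $X_k=a_kX_{k-1}+b_kX_{k-2}$ with
\[
 a_k=1-t^{2k-1}\frac{1-t+t^2}{(1-t)^2},\qquad
 b_k=-\frac{t^{4k-1}}{(1+t)^2(1-t)^4}.
\]
Substituting into $W_k=R_kS_{k-1}-R_{k-1}S_k$, the terms involving $a_k$ give $a_k(R_{k-1}S_{k-1}-R_{k-1}S_{k-1})=0$. What remains is
\[
 W_k=b_k\bigl(R_{k-2}S_{k-1}-R_{k-1}S_{k-2}\bigr)=-b_kW_{k-1},\qquad k\geq2.
\]
Iterating gives $W_k=W_1\prod_{m=2}^k(-b_m)$. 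Each factor is $-b_m=t^{4m-1}(1+t)^{-2}(1-t)^{-4}$, so
\[
 \prod_{m=2}^k(-b_m)=\frac{t^{\sum_{m=2}^k(4m-1)}}{(1+t)^{2(k-1)}(1-t)^{4(k-1)}},
\qquad
 \sum_{m=2}^k(4m-1)=2k^2+k-3.
\]

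It remains to compute the base case $W_1=R_1S_0-R_0S_1$ from \eqref{eq:three-R-initial}--\eqref{eq:three-S-initial}. We have
\[
 R_1S_0=-\frac{1-2t+t^3-t^4}{(1-t)^2},\qquad R_0S_1=-\frac{1-t-t^2+t^3-t^4}{(1-t)^2}.
\]
Hence
\[
 W_1=\frac{-(1-2t+t^3-t^4)+(1-t-t^2+t^3-t^4)}{(1-t)^2}=\frac{t-t^2}{(1-t)^2}=\frac{t}{1-t}.
\]
This matches the claimed formula at $k=1$, which reads $t^{1}/\bigl((1+t)^0(1-t)^1\bigr)$. Multiplying, we get
\[
 W_k=\frac{t}{1-t}\cdot\frac{t^{2k^2+k-3}}{(1+t)^{2k-2}(1-t)^{4k-4}}=\frac{t^{2k^2+k-2}}{(1+t)^{2k-2}(1-t)^{4k-3}},
\]
as required.

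There is no real obstacle. The only points needing care are the sign bookkeeping in $W_k=-b_kW_{k-1}$, since the $b_k$ are negative and so the factors $-b_k$ come out positive, and the small algebra for $W_1$. The argument relies on Proposition~\ref{prop:three-scaled-recurrence} holding for both sequences from $k=2$ onward, which is exactly what it asserts.
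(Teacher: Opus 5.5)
Your proof is correct and follows the paper's argument: the Casoratian relation $W_k=-b_kW_{k-1}$ from the common recurrence, the base value $W_1=t/(1-t)$ from the initial data, and iteration. The paper names the $X_{k-2}$-coefficient $-b_k$ with $b_k>0$, so its relation reads $W_k=b_kW_{k-1}$; this is the same as your $W_k=-b_kW_{k-1}$ with your negative $b_k$, and your exponent sum $2k^2+k-3$ is right.
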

\begin{proof}
If the coefficient of $X_{k-2}$ in
\eqref{eq:three-scaled-recurrence} is $-b_k$, substitution gives
$W_k=b_kW_{k-1}$. The initial values give $W_1=t/(1-t)$; iteration gives
\eqref{eq:three-wronskian}.
\end{proof}

In particular, for $0<|t|<1$, the two solutions $(R_k)$ and $(S_k)$ of
the recurrence relation \eqref{eq:three-scaled-recurrence} are linearly independent.

\begin{proposition}\label{prop:three-RS-limits}
The sequences $(R_k)$ and $(S_k)$ converge locally uniformly in $|t|<1$.
Their limits
\[
 R(t)=\lim_{k\to\infty}R_k(t),
 \qquad
 S(t)=\lim_{k\to\infty}S_k(t)
\]
are analytic there, with $R(0)=1$, $S(0)=-1$, and
\[
 R_k-R=O(t^{2k+1}),
 \qquad
 S_k-S=O(t^{2k+1})
\]
coefficientwise.
\end{proposition}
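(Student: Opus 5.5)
The plan is to write the recurrence \eqref{eq:three-scaled-recurrence} as a perturbation of the identity and bound the increments. Set
\[
 a_k(t)=t^{2k-1}\frac{1-t+t^2}{(1-t)^2},\qquad b_k(t)=\frac{t^{4k-1}}{(1+t)^2(1-t)^4},
\]
so that
\[
 X_k-X_{k-1}=-a_kX_{k-1}-b_kX_{k-2}.
\]
On a closed disc $|t|\le r<1$ both $a_k$ and $b_k$ are analytic, with $|a_k|\le c_r r^{2k-1}$ and $|b_k|\le c_r r^{4k-1}$ for a constant $c_r$ depending only on $r$.

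\textbf{Uniform boundedness.} Let $M_k=\max(|X_k|,|X_{k-1}|)$ on the disc. The recurrence gives
\[
 M_k\le(1+|a_k|+|b_k|)M_{k-1},
\]
hence
\[
 M_k\le M_1\prod_{j\ge2}\bigl(1+c_r r^{2j-1}+c_r r^{4j-1}\bigr)<\infty.
\]
This requires $R_0,R_1,S_0,S_1$ to be analytic on the closed disc. By \eqref{eq:three-R-initial}--\eqref{eq:three-S-initial} their only poles are at $t=\pm1$, so this holds.

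\textbf{Convergence and analyticity.} With the uniform bound in hand, $|X_k-X_{k-1}|\le C'_r r^{2k-1}$. This is summable, so $X_k$ converges uniformly on $|t|\le r$. By Weierstrass the limits $R,S$ are analytic in $|t|<1$. Also $X_k(0)=X_{k-1}(0)$ for $k\ge2$, because $a_k(0)=b_k(0)=0$. Therefore $R(0)=R_1(0)=1$ and $S(0)=S_1(0)=-1$, and one checks these directly from the initial values.

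\textbf{Coefficientwise estimate.} Since $R_k,S_k$ are power series in $t$ (Proposition~\ref{prop:three-scaled-recurrence}), the increment $X_k-X_{k-1}=-a_kX_{k-1}-b_kX_{k-2}$ is divisible by $t^{2k-1}$ for every $k\ge2$. Telescoping gives
\[
 X_k-X=\sum_{j>k}(X_{j-1}-X_j)=O(t^{2k+1}),
\]
since the first term, $j=k+1$, carries $t^{2(k+1)-1}=t^{2k+1}$. This is in the formal power series sense. Because the uniform limit and the formal limit coincide, as the coefficients converge by Cauchy's estimates, the statement holds for the analytic limits as well.

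\textbf{Main obstacle.} The only nontrivial point is the uniform bound. It needs the product $\prod_j(1+|a_j|+|b_j|)$ to converge on the whole closed disc, including near $t=-1$ and $t=1$ where the prefactors $(1-t)^{-2}$ and $(1+t)^{-2}$ blow up. Working on $|t|\le r$ with an $r$-dependent constant handles this, so the argument should be routine.
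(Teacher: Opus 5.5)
Your proof is correct and follows essentially the same route as the paper. It obtains an $r$-dependent uniform bound on $\max(|X_k|,|X_{k-1}|)$ from the convergent product $\prod_j\bigl(1+O(r^{2j-1})\bigr)$, uses summable increments to get locally uniform Cauchy convergence and hence analyticity, and reads off the coefficientwise estimate from the factors $t^{2k-1}$ and $t^{4k-1}$ in the recurrence (your telescoping covers $k\geq1$, and $k=0$ is immediate from $R_0(0)=R(0)=1$, $S_0(0)=S(0)=-1$).
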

\begin{proof}
Fix $0<r<1$. On $|t|\leq r$, write the two coefficients on the right of
\eqref{eq:three-scaled-recurrence} as $1+u_k(t)$ and $v_k(t)$. For
constants $C_r,C'_r$ depending only on $r$,
\[
 |u_k(t)|\leq C_rr^{2k-1},
 \qquad
 |v_k(t)|\leq C'_rr^{4k-1}.
\]
If $M_r$ is a uniform bound for $|X_k|+|X_{k-1}|$, the recurrence gives
the estimate
\[
 |X_k-X_{k-1}|
 \leq M_r\bigl(C_rr^{2k-1}+C'_rr^{4k-1}\bigr).
\]
Such an $M_r$ exists because iterating the preceding recurrence bounds
$|X_k|+|X_{k-1}|$ by the initial norm times a product of factors
$1+O(r^{2k-1})+O(r^{4k-1})$, and that product converges. Summing the
displayed estimate shows uniformly on $|t|\leq r$ that $(X_k)$ is Cauchy.
Local uniform convergence proves analyticity. Finally, the powers in the
recurrence show inductively that the Taylor coefficients stabilise
through degree $2k$, which gives the asserted coefficientwise estimates;
the values at zero follow from the initial conditions.
\end{proof}

\begin{theorem}\label{thm:three-quotient}
For $z=t/(1+t)^2$ with sufficiently small $|t|$,
\[
 g^{(3)}(z)=-\frac{S(t)}{R(t)},
\]
and consequently
\begin{equation}\label{eq:three-D-quotient}
 D^{(3)}(z)=1-z\frac{S(t)}{R(t)}.
\end{equation}
\end{theorem}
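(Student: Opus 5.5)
The plan is to compare two descriptions of the Taylor coefficients of $\Phi_t(w)$ in $w$. The first is the formal solution \eqref{eq:three-Phi-UV}, whose coefficients grow super-exponentially by \eqref{eq:three-RS-coefficient-def}. The second is the analytic definition \eqref{eq:three-Phi-def}, whose coefficients grow at most geometrically. Write $\Phi_t(w)=\sum_k\varphi_k(t)w^k$ and
\[
 c_k(t)=(-1)^k\frac{\bigl((1+t)(1-t)^2\bigr)^k}{t^{k(k+3)}} .
\]
Then \eqref{eq:three-Phi-UV} and \eqref{eq:three-RS-coefficient-def} give, for every $k$,
\[
 \varphi_k(t)=c_k(t)\bigl(\widehat g^{(3)}(t)R_k(t)+S_k(t)\bigr).
\]
Dividing by $c_k$ and letting $k\to\infty$, the factor $|c_k|^{-1}\sim |t|^{k^2}$ should force $\widehat g^{(3)}R+S=0$. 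Since $R(0)=1$, this gives $g^{(3)}=-S/R$ near $t=0$, and \eqref{eq:three-D-quotient} then follows from \eqref{eq:Dj-from-gj}.

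\textbf{Step 1: the formal and analytic coefficients agree.} Fix $t$ with $0<|t|$ small, and set $z=t/(1+t)^2$, so that $|z|\approx|t|$. By the Marcus--Tardos bound, $G^{(3)}(z;x)=\mathbb G^{(3)}(z;x,1)$ is analytic in $x$ for $|x|<1/(M^{(3)}|z|)$. The map $x(w)=(1+t)(1-w)/(1-tw)$ satisfies $|x(w)|\le 4$ for $|w|\le c/|t|$ with a fixed small $c$ (for instance $c=1/2$). Hence, once $|t|$ is small enough that $4M^{(3)}|z|<1$, the function $\Phi_t(w)=t(1-tw)^{-1}G^{(3)}(z;x(w))$ is analytic on the disc $|w|<c/|t|$.

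Substituting into \eqref{eq:kernel-three} is then an honest analytic identity in $w$, because both $w$ and $t^2w$ stay in the disc. So the Taylor series of $\Phi_t$ satisfies \eqref{eq:three-q-equation}, and therefore also the rewritten form \eqref{eq:three-outward} as power series in $w$.

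The coefficient recursion coming from \eqref{eq:three-outward} determines each $\varphi_n$ uniquely from $\widehat g^{(3)}$, since $\alpha^{(3)}_t(w)$ is divisible by $w$. Hence the Taylor coefficients coincide with those of $\widehat g^{(3)}U_t+V_t$, which justifies the display above for each numerical $t$.

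\textbf{Step 2: the limit.} Cauchy's estimate on the disc gives $|\varphi_k(t)|\le K_t(|t|/c')^k$ for some $c'<c$. Dividing by $|c_k(t)|$ yields
\[
 \bigl|\widehat g^{(3)}(t)R_k(t)+S_k(t)\bigr|\le K_t\,|t|^{k(k+3)+k}\bigl((c')\,|1+t|\,|1-t|^2\bigr)^{-k}\longrightarrow0 .
\]
By Proposition~\ref{prop:three-RS-limits}, the left side tends to $|\widehat g^{(3)}(t)R(t)+S(t)|$. So $\widehat g^{(3)}R+S=0$ for all small $t\neq0$. Both sides are analytic in $t$ near $0$: $\widehat g^{(3)}$ is the composition of $g^{(3)}$, which is analytic near $z=0$, with $z(t)$. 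The identity therefore extends by continuity to $t=0$. Finally $R(0)=1$, so $R$ is nonvanishing near $0$, and we may divide to get $g^{(3)}(z)=-S(t)/R(t)$.

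\textbf{Main obstacle.} The delicate point is Step 1. One must make sure that the purely formal iteration, which moves $w$ \emph{away} from the origin via $t^{-2}w$, really computes the Taylor coefficients of the analytic function $\Phi_t$. The uniqueness argument handles this: the analytic function satisfies the one-step relation \eqref{eq:three-outward}, and that relation alone pins down its coefficients. A second point to check is that the disc of analyticity in $w$ has radius of order $1/|t|$, not merely $O(1)$. Even a fixed radius would suffice, however, because $|c_k|^{-1}$ decays like $|t|^{k^2}$ and beats any geometric growth. So only some uniform positive radius is needed for each fixed small $t$.
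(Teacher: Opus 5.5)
Your argument is correct and is the paper's own: analyticity of $\Phi_t$ near $w=0$ gives at most geometric growth of its Taylor coefficients, while the scaling factor of size $|t|^{-k^2}$ in \eqref{eq:three-RS-coefficient-def} forces $\widehat g^{(3)}(t)R_k(t)+S_k(t)\to0$, hence $\widehat g^{(3)}R+S=0$. The differences are minor: you phrase this as a direct bound rather than a contradiction, and you spell out the uniqueness of the coefficient recursion that the paper leaves implicit. There is one small slip: on $|w|\le 1/(2|t|)$ the value $|x(w)|$ is of order $1/|t|$, not bounded by $4$; but, as you note yourself, any fixed radius such as $|w|\le 1/2$ suffices, since there $|x(w)|\le4$ for small $|t|$.
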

\begin{proof}
For sufficiently small $|z|$, the convergence estimate in
\S\ref{sec:common-kernel} shows that $G^{(3)}(z;x)$ is analytic for $x$
in a fixed neighbourhood of $1$. Since $x(0)=1+t$,
\eqref{eq:three-Phi-def} makes $\Phi_t$ analytic near $w=0$; its Taylor
coefficients therefore grow at most exponentially.

By \eqref{eq:three-Phi-UV} and
\eqref{eq:three-RS-coefficient-def}, the coefficient of $(-w)^k$ in
$\Phi_t(w)$ is
\begin{equation}\label{eq:three-Phi-coefficient}
 \frac{\bigl((1+t)(1-t)^2\bigr)^k}{t^{k(k+3)}}
 \bigl(\widehat g^{(3)}(t)R_k(t)+S_k(t)\bigr).
\end{equation}
If $\widehat g^{(3)}(t)R(t)+S(t)\neq0$, the final factor stays bounded away
from zero as $k\to\infty$. Since $0<|t|<1$, the $k$th root of the absolute
value of \eqref{eq:three-Phi-coefficient} then tends to infinity,
contradicting the positive radius of convergence of $\Phi_t$. Hence
$\widehat g^{(3)}(t)R(t)+S(t)=0$. Since $R(0)=1$, division is valid near
the origin, and \eqref{eq:Dj-from-gj} gives
\eqref{eq:three-D-quotient}.
\end{proof}

As a check,
\[
 -\frac{S(t)}{R(t)}
 =1+2t+2t^2+3t^3+5t^4+9t^5+17t^6+32t^7+\cdots,
\]
and substitution of $t=t(z)$ recovers the coefficients $d_n^{(3)}$ in the
introduction.

\subsection{Oscillation and non-cancellation}\label{sec:three-zeros}
We now prove that the denominator in \eqref{eq:three-D-quotient} has
infinitely many zeros in the unit disc and that none are cancelled by the
numerator. We use the standard notation
\[
 (a;t)_j=\prod_{i=0}^{j-1}(1-at^i),
 \qquad
 (a;t)_\infty=\prod_{i=0}^\infty(1-at^i).
\]

\begin{proposition}\label{prop:three-R-series}
For $|t|<1$,
\begin{equation}\label{eq:three-R-series}
 R(t)=\sum_{j\geq0}(-1)^jt^{j^2}
 \frac{1-(1+t-t^2)(1-t^j)}
 {(1+t)^j(1-t)^{2j-1}(t;t)_j}.
\end{equation}
The series converges locally uniformly in the open unit disc.
\end{proposition}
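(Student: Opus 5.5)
The natural route is to find a closed form $R_k(t)$ for finite $k$, as a partial sum or a terminating $q$-series, that satisfies \eqref{eq:three-scaled-recurrence} with the initial values \eqref{eq:three-R-initial}, and then let $k\to\infty$ using Proposition~\ref{prop:three-RS-limits}. A more economical variant goes through the generating function of the $R_k$. By \eqref{eq:three-RS-coefficient-def}, $U_t$ is a rescaling of $\sum_k R_k w^k$, and $U_t$ satisfies $U_t(w)=\beta_t^{(3)}(w)+\alpha_t^{(3)}(w)U_t(t^{-2}w)$. Since $R_k\to R$, the function $(1-w)\sum_k R_kw^k=\sum_k (R_k-R_{k-1})w^k$ evaluated at $w=1$ gives $R$. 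So I will look for the telescoping decomposition
\[
 R_k-R_{k-1}=\Delta_k(t),\qquad R(t)=R_0+\sum_{k\ge1}\Delta_k(t).
\]
The summand in \eqref{eq:three-R-series} is then expected to be a regrouping of the $\Delta_k$. Its $t^{j^2}$ factor matches the coefficientwise estimate $R_k-R=O(t^{2k+1})$, since $t^{j^2}-t^{(j+1)^2}$ has gap $2j+1$.

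Concretely, I would proceed as follows. First, guess the finite-$k$ formula
\[
 R_k(t)=\sum_{j=0}^{k}(-1)^jt^{j^2}\frac{c_{k,j}(t)}{(1+t)^j(1-t)^{2j-1}(t;t)_j},
\]
where $c_{k,j}$ is a polynomial factor depending on $k$ that tends to $1-(1+t-t^2)(1-t^j)$ as $k\to\infty$. The natural shape is something like $c_{k,j}=1-(1+t-t^2)(1-t^j)$ times a ratio of the form $(t^{k-j+1};t)_j$ or $(t^{2k-?};t^2)$. To pin it down I would compute $R_2,R_3$ from the recurrence and compare. Second, verify that the candidate satisfies \eqref{eq:three-scaled-recurrence}. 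I would do this by the standard $q$-Pascal type manipulation: write $R_k-R_{k-1}$ termwise, use $(t^{a};t)_j-(t^{a+1};t)_j$ identities, and check that it equals $-t^{2k-1}\frac{1-t+t^2}{(1-t)^2}R_{k-1}-\frac{t^{4k-1}}{(1+t)^2(1-t)^4}R_{k-2}$ after shifting the summation index $j\mapsto j-1$ in the $R_{k-2}$ term. The factors $(1+t)^{-2}(1-t)^{-4}$ and $t^{4k-1}$ are consistent with such a shift of two units in the $(1+t)^j(1-t)^{2j}$ normalisation. Third, check $k=0,1$ against \eqref{eq:three-R-initial}. Fourth, pass to the limit: each summand converges as $k\to\infty$, and the bound $|t|^{j^2}/|(t;t)_j|\cdot |(1+t)(1-t)^2|^{-j}$ is summable uniformly on $|t|\le r<1$, because $(t;t)_j$ is bounded below there while $r^{j^2}$ beats any geometric factor. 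Dominated convergence then gives \eqref{eq:three-R-series} together with the local uniform convergence.

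An alternative avoiding the guess is to solve $U_t(w)=\beta_t^{(3)}+\alpha_t^{(3)}U_t(t^{-2}w)$ directly by expanding in partial fractions in $w$. Iterating produces sums over $j$ of $\prod_i \alpha_t^{(3)}(t^{-2i}w)$. Each product of $(1-t^{-2i}w)^{-1}(t^3-t^{-2i}w)^{-1}$ factors is a ratio of $q$-Pochhammer symbols in $w$ with $q=t^2$, and the coefficients of $w^k$ can be extracted via the $q$-binomial theorem, giving a double sum whose limit is \eqref{eq:three-R-series}. The appearance of $(t;t)_j$ rather than $(t^2;t^2)_j$ suggests the two partial fraction families at $w=t^{2i}$ and $w=t^{3+2i}$ interleave into a single base-$t$ product, which is a useful consistency check.

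The main obstacle is the first step: identifying the correct $k$-dependent finite form, or equivalently carrying out the double-sum reduction, so that the recurrence verification closes. I would settle it by computing $R_2,R_3,R_4$ symbolically and fitting them to the ansatz. The analytic limit step afterwards is routine.
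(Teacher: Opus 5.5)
There is a genuine gap: the proposal never supplies the one nontrivial ingredient, and you acknowledge this yourself. Both routes stop where the content of the proposition lies. The finite-$k$ formula for $R_k$ is left to be guessed by fitting $R_2,R_3,R_4$. In the partial-fraction route, the limit of the resulting double sum is only asserted to be \eqref{eq:three-R-series}, although collapsing that double sum to a single sum is itself a nontrivial $q$-series identity. What you actually carry out, namely termwise convergence and the bound $|t|^{j^2}/|(t;t)_j|\cdot|(1+t)(1-t)^2|^{-j}$ on $|t|\leq r<1$, is only the routine final step.

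The ansatz as written also cannot work. With $c_{k,j}$ polynomial and $0\leq j\leq k$, the case $k=0$ reads $R_0=c_{0,0}(t)(1-t)$, which is impossible since $R_0=1$. Writing $T_j$ for the $j$th summand of \eqref{eq:three-R-series}, one finds $R_1=T_0+T_1+t^3/(1-t^2)$, so truncations are not exact. Moreover \eqref{eq:three-scaled-recurrence} gives $R_k\in(1+t)^{-k}(1-t)^{-2k}\mathbb Z[t]$, so in any exact finite form the cyclotomic denominators coming from $(t;t)_j$ must cancel; your ansatz does not say how.

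The ``economical variant'' rests on a false premise. Because of the factor $t^{-k(k+3)}$ in \eqref{eq:three-RS-coefficient-def}, $U_t$ is not a rescaling of $\sum_kR_kw^k$, so its first-order functional equation does not transfer. The series $\sum_kR_kw^k$ instead satisfies a second-order $t^2$-difference equation. Evaluating $(1-w)\sum_kR_kw^k$ at $w=1$ does return $R$, but it yields no formula by itself.

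The missing idea is that no exact finite form is needed. The paper proves only the congruence $R_k(t)\equiv\sum_{j\geq0}T_j(t)\pmod{t^{2k+1}}$, with the whole infinite series on the right, by induction along \eqref{eq:three-scaled-recurrence}. The passage from the $(j-1)$st to the $j$th term uses $t^{(j-1)^2}t^{2j-1}/(t;t)_{j-1}=t^{j^2}(1-t^j)/(t;t)_j$. With this, the terms carrying $1-t^j$ shift the summation index, while the unshifted ones combine into $T_j$.

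Combined with $R_k-R=O(t^{2k+1})$ from Proposition~\ref{prop:three-RS-limits}, the congruence gives $R=\sum_jT_j$ as formal power series. The Weierstrass bound, which you already have, makes both sides analytic on $|t|<1$, so the identity holds on the whole disc. Working modulo $t^{2k+1}$ is exactly what makes the unknown $k$-dependent corrections irrelevant. To rescue your own route you would have to exhibit and verify an exact formula for $R_k$, or carry out the $q$-binomial extraction from $U_t$ and the resummation of its limit. As it stands, the proposition is not proved.
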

\begin{proof}
Let $T_j(t)$ denote the $j$th summand in
\eqref{eq:three-R-series}. Iteration of
\eqref{eq:three-scaled-recurrence} coefficientwise at $t=0$ gives
\begin{equation}\label{eq:three-Rk-congruence}
 R_k(t)=\sum_{j\geq0}(-1)^jt^{j^2}
 \frac{1-(1+t-t^2)(1-t^j)}
 {(1+t)^j(1-t)^{2j-1}(t;t)_j}+O(t^{2k+1}).
\end{equation}
Indeed, the initial cases follow from \eqref{eq:three-R-initial}; after
substitution of the induction hypothesis, the passage from the
$(j-1)$st term to the $j$th term uses the explicit identity
\[
 \frac{t^{(j-1)^2}t^{2j-1}}{(t;t)_{j-1}}
 =\frac{t^{j^2}(1-t^j)}{(t;t)_j}.
\]
Thus the terms containing $1-t^j$ shift the summation index by one, while
the two unshifted contributions combine, using
$1-(1+t-t^2)(1-t^j)$, into $T_j(t)$. Terms not yet determined after the
$k$th recurrence step have degree at least $2k+1$, which proves
\eqref{eq:three-Rk-congruence}. Proposition
\ref{prop:three-RS-limits} now proves the formal identity. On $|t|\leq r<1$
the products $(t;t)_j$ are bounded away from zero uniformly in $j$, while
$t^{j^2}$ dominates every exponential factor. The Weierstrass test gives
local uniform convergence.
\end{proof}

\begin{remark}
For $0<t<1$, let
\[
 A_t(x)=\sum_{j\geq0}\frac{t^{j^2}(-x)^j}{(t;t)_j}
\]
denote Ramanujan's entire function. Separating the two terms in the
numerator of \eqref{eq:three-R-series} gives the exact identity
\[
 R(t)=-t(1-t)^2
 A_t\!\left(\frac{1}{(1+t)(1-t)^2}\right)
 +(1-t)(1+t-t^2)
 A_t\!\left(\frac{t}{(1+t)(1-t)^2}\right).
\]
Thus, $R(t)$ is a linear combination of two values of Ramanujan's entire
function whose arguments tend to $+\infty$ as $t\uparrow1$.
Similarly, one can easily show that
\[
 S(t)=-(1+t)A_t\left(\frac{t^2}{(1+t)(1-t)^2}\right).
\]
\end{remark}

It is not immediately clear to this author how to use the remark above for our purposes,
and instead we calculate as follows.

Define
\[
 \varepsilon=-\log t,
 \qquad
 \nu=-\frac{\log\bigl((1-t)\sqrt{1+t}\bigr)}{\varepsilon},
\]
so that $t^\nu=(1-t)\sqrt{1+t}$,
and for real $u$,
\begin{equation}\label{eq:three-F-epsilon}
 F_\varepsilon(u)=e^{-\varepsilon(u-\nu)^2}
 \bigl((1+t-t^2)t^u-t(1-t)\bigr)(t^{u+1};t)_\infty.
\end{equation}
Note that as $t\uparrow1$,
\[
 \varepsilon\sim1-t,
 \qquad
 \nu\sim\frac{1}{1-t}\log\frac{1}{\sqrt2(1-t)}\longrightarrow\infty.
\]
so that the Gaussian part of $F_\varepsilon(t)$ has centre $\nu$, which goes to infinity,
and width of order $\varepsilon^{-1/2}\sim(1-t)^{-1/2}$.

Using the elementary identities
\begin{align*}
 (1+t)^j(1-t)^{2j-1}&=\frac{t^{2\nu j}}{1-t},\notag\\
 t^{j^2-2\nu j}
 &=e^{\varepsilon\nu^2}e^{-\varepsilon(j-\nu)^2},
\\
 \frac{(t;t)_\infty}{(t;t)_j}&=(t^{j+1};t)_\infty\notag
\end{align*}
and multiplying \eqref{eq:three-R-series} by $(t;t)_\infty$ leads to the formula
\begin{equation}\label{eq:three-R-bilateral}
 \frac{(t;t)_\infty R(t)}{(1-t)e^{\varepsilon\nu^2}}
 =\sum_{j\in\mathbb Z}(-1)^jF_\varepsilon(j).
\end{equation}
We have extended the summation to negative integers
because the last product in $F_\varepsilon(j)$, $j\in \ZZ_{<0}$, contains $1-t^0$ and hence vanishes.

We want to use Poisson summation to obtain the asymptotic behaviour of $R(t)$ as $t\uparrow1$.
With the Fourier-transform convention
$\widehat F(\xi)=\int_{-\infty}^\infty F(u)e^{-2\pi i\xi u}\,du$, one has
\begin{lemma}\label{lem:three-F-Fourier}
For $0<t<1$, the function $F_\varepsilon$ is Schwartz and
\begin{equation}\label{eq:three-F-Fourier}
 \widehat F_\varepsilon(\xi)
 =(1-t)\sqrt{\frac{\pi}{\varepsilon}}
 e^{-2\pi i\xi\nu-\pi^2\xi^2/\varepsilon}
 K_\varepsilon(e^{i\pi\xi}),
\end{equation}
where
\begin{align}
 K_\varepsilon(w)=\sum_{\ell\geq0}
 \frac{(-1)^\ell t^{\nu\ell}}{(t;t)_\ell}
 \bigl(&(1+t-t^2)\sqrt{1+t}\,
 t^{(\ell^2-1)/4}w^{\ell+1}\notag\\[-2mm]
 &{}-t^{1+\ell(\ell+2)/4}w^\ell\bigr).
\label{eq:three-K-epsilon}
\end{align}
Uniformly on $|w|=1$,
\begin{equation}\label{eq:three-K-limit}
 K_\varepsilon(w)\longrightarrow
 K_0(w)=(\sqrt2w-1)e^{-\sqrt2w}
 \qquad(t\uparrow1),
\end{equation}
and $K_\varepsilon$ is uniformly bounded there for $t$ sufficiently close
to $1$.
\end{lemma}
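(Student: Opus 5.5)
The plan is to expand the infinite product in $F_\varepsilon$ with Euler's identity and then Fourier-transform term by term, since each term becomes a Gaussian times an exponential in $u$. Concretely, Euler's expansion
\[
 (at;t)_\infty=\sum_{\ell\geq0}\frac{(-1)^\ell t^{\ell(\ell+1)/2}a^\ell}{(t;t)_\ell},
 \qquad a=t^{u},
\]
gives $(t^{u+1};t)_\infty=\sum_\ell (-1)^\ell t^{\ell(\ell+1)/2}t^{\ell u}/(t;t)_\ell$. Multiplying by the prefactor $(1+t-t^2)t^u-t(1-t)$ writes $F_\varepsilon(u)$ as a sum over $\ell\geq0$ of two terms. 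Each has the form $c_\ell\,e^{-\varepsilon(u-\nu)^2}e^{-\varepsilon m u}$ with $m=\ell+1$ or $m=\ell$. This series converges absolutely and uniformly in $u$ on compacta, and is dominated (for $u$ real) by $e^{-\varepsilon(u-\nu)^2}\sum_\ell t^{\ell(\ell+1)/2}t^{\ell u}/|(t;t)_\ell|$. For $u\to-\infty$ that sum grows only like $e^{O(u^2)\cdot\text{small}}$; more precisely, completing the square gives $t^{\ell(\ell+1)/2+\ell u}\le t^{-u^2/2}$ up to constants. I would check that $e^{-\varepsilon u^2}t^{-u^2/2}=e^{-\varepsilon u^2/2}$ still decays. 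Together with the same bounds on derivatives (each derivative brings down powers of $\varepsilon m$, harmless against the Gaussian in $\ell$), this shows $F_\varepsilon$ is Schwartz and justifies termwise Fourier transformation by dominated convergence.

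Next, for each term I compute $\int e^{-\varepsilon(u-\nu)^2-\varepsilon m u-2\pi i\xi u}\,du$ by completing the square. It equals
\[
 \sqrt{\pi/\varepsilon}\,\exp\bigl(-\varepsilon m\nu-2\pi i\xi\nu+(\varepsilon m+2\pi i\xi)^2/(4\varepsilon)\bigr)
 =\sqrt{\pi/\varepsilon}\,e^{-2\pi i\xi\nu-\pi^2\xi^2/\varepsilon}\,t^{m\nu}t^{-m^2/4}e^{i\pi\xi m}.
\]
So each term contributes $w^m$ with $w=e^{i\pi\xi}$, times $t^{m\nu-m^2/4}$. Collecting powers of $t$ with $t^\nu=(1-t)\sqrt{1+t}$ should then reproduce \eqref{eq:three-K-epsilon} after factoring out $(1-t)t^{\nu\ell}$:
\begin{itemize}
\item for $m=\ell+1$ one gets $t^{\ell(\ell+1)/2-(\ell+1)^2/4}=t^{(\ell^2-1)/4}$ and an extra $t^\nu=(1-t)\sqrt{1+t}$;
\item for $m=\ell$ one gets $t^{\ell(\ell+1)/2-\ell^2/4}\cdot t=t^{1+\ell(\ell+2)/4}$ and a factor $(1-t)$ from the coefficient $t(1-t)$.
\end{itemize}
This step is bookkeeping.

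For the limit \eqref{eq:three-K-limit}, I would examine each summand as $t\uparrow1$ with $\ell$ fixed. We have $t^{\nu\ell}/(t;t)_\ell=((1-t)\sqrt{1+t})^\ell/(t;t)_\ell\to(\sqrt2)^\ell/\ell!$, because $(t;t)_\ell\sim\ell!(1-t)^\ell$. The bracket tends to $2\cdot1\cdot w^{\ell+1}\cdot$(the $\sqrt{1+t}\to\sqrt2$ factor)$-w^\ell$, i.e.\ $\sqrt2 w^{\ell+1}-w^\ell$. So the sum tends to $\sum_\ell(-\sqrt2w)^\ell(\sqrt2w-1)/\ell!=(\sqrt2w-1)e^{-\sqrt2w}$.

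The main obstacle is uniformity, i.e.\ a dominating sequence in $\ell$ independent of $t$ near $1$ and of $|w|=1$. I would use
\[
 (t;t)_\ell\ge\prod_{i\le\ell}\bigl(i(1-t)t^{i-1}\bigr)\ge \ell!(1-t)^\ell t^{\ell^2/2},
\]
which follows from $1-t^i\ge i(1-t)t^{i-1}$. This gives
\[
 \Bigl|\frac{t^{\nu\ell}}{(t;t)_\ell}\Bigr|\le\frac{(\sqrt{1+t})^\ell}{\ell!}t^{-\ell^2/2}.
\]
The troublesome factor $t^{-\ell^2/2}$ is compensated by $t^{(\ell^2-1)/4}$ only partially, so this crude bound is insufficient. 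I would refine it by splitting at $\ell\le L$ (where $t^{-\ell^2}$ is bounded if $\ell^2(1-t)$ is small) versus large $\ell$. For large $\ell$ I would use the sharper bound $(t;t)_\ell\ge(t;t)_\infty$-type estimates, or $(t;t)_\ell\ge c\,(1-t)^{\min(\ell,1/(1-t))}\cdots$, so that $1/\ell!$-type decay, or alternatively Gaussian decay $t^{\ell^2/4}$, wins in each regime. Once a $t$-independent summable majorant is secured, Tannery's theorem gives uniform convergence and boundedness on $|w|=1$.
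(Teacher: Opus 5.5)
Your Euler expansion, the Gaussian integrals with $m=\ell+1$ and $m=\ell$, the exponent bookkeeping leading to \eqref{eq:three-K-epsilon}, and the termwise limit $(\sqrt2w-1)e^{-\sqrt2w}$ are all correct and follow the paper. Your Schwartz argument is a valid substitute for the paper's device of splitting off finitely many factors as $u\to-\infty$: you complete the square in $\ell$, so that for fixed $t$ the sum over $\ell$ is a bounded theta sum divided by $(t;t)_\infty$.

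The gap is exactly the step you flag as the main obstacle. You never obtain a summable majorant independent of $t$, so neither the uniform convergence \eqref{eq:three-K-limit} nor the uniform boundedness of $K_\varepsilon$ on $|w|=1$ is proved. Your inequality $1-t^i\geq i(1-t)t^{i-1}$ bounds $1+t+\cdots+t^{i-1}$ by $i$ times its \emph{smallest} term, and so it loses a factor $t^{-\ell^2/4}$, as you observe. The repairs you propose do not work as stated. The bound $(t;t)_\ell\geq(t;t)_\infty$ gives nothing uniform: $1/(t;t)_\infty$ blows up roughly like $e^{\pi^2/(6\varepsilon)}$ as $t\uparrow1$, so it does not even bound a fixed summand independently of $t$. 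A split at a $t$-dependent cutoff could be made to work with some effort. However, it would replace Tannery's theorem by separate uniform tail estimates over several regimes of $\ell$, and you do not carry these out.

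The missing idea is to bound the sum by $k$ times the \emph{geometric mean} of its terms. By AM--GM, $1-t^k=(1-t)(1+t+\cdots+t^{k-1})\geq k(1-t)t^{(k-1)/2}$, hence $(t;t)_\ell\geq\ell!\,(1-t)^\ell t^{\ell(\ell-1)/4}$. The exponent $\ell(\ell-1)/4$ is exactly what the Gaussian factors in \eqref{eq:three-K-epsilon} can absorb. Using $t^{\nu\ell}=(1-t)^\ell(1+t)^{\ell/2}$, the two parts of the $\ell$th summand are bounded in modulus by $(1+t-t^2)\sqrt{1+t}\,t^{(\ell-1)/4}(\sqrt{1+t})^\ell/\ell!$ and by $t^{1+3\ell/4}(\sqrt{1+t})^\ell/\ell!$. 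Hence both are at most $C(\sqrt2)^\ell/\ell!$, uniformly for $t$ near $1$ and $|w|=1$. With this majorant, dominated convergence gives both the uniform limit and the uniform bound. For each fixed $t$ it also gives the absolute convergence needed for termwise integration. This is how the paper concludes.
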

\begin{proof}
Euler's identity gives
\[
 (t^{u+1};t)_\infty
 =\sum_{\ell\geq0}
 \frac{(-1)^\ell t^{\ell(\ell+1)/2}t^{\ell u}}
 {(t;t)_\ell}.
\]
Insert this expansion into $F_\varepsilon$, write $u=\nu+s$, and use
$t^\nu=(1-t)\sqrt{1+t}$. For each $\ell$, the two terms in the factor
$(1+t-t^2)t^u-t(1-t)$ lead respectively to the Gaussian integrals with
$a=\ell+1$ and $a=\ell$ in
\begin{align}
 \int_{-\infty}^\infty
 e^{-\varepsilon s^2-a\varepsilon s-2\pi i\xi s}\,ds
 =\sqrt{\frac{\pi}{\varepsilon}}
 \exp\left(\frac{a^2\varepsilon}{4}+i\pi a\xi
 -\frac{\pi^2\xi^2}{\varepsilon}\right).
\label{eq:three-Gaussian-transform}
\end{align}
Collecting the common Gaussian factor gives
\eqref{eq:three-F-Fourier} and \eqref{eq:three-K-epsilon}.

For convergence, the arithmetic--geometric mean inequality gives
\[
 1-t^k\geq(1-t)kt^{(k-1)/2},
\]
and hence
\[
 (t;t)_\ell\geq(1-t)^\ell\ell!t^{\ell(\ell-1)/4}.
\]
Since $t^\nu=(1-t)\sqrt{1+t}$, the two series in
\eqref{eq:three-K-epsilon} are bounded by constant multiples of
$\sum (\sqrt{1+t})^\ell/\ell!$. This justifies termwise integration and
gives the uniform bound. For fixed $\ell$,
\[
 \frac{t^{\nu\ell}}{(t;t)_\ell}\longrightarrow
 \frac{(\sqrt2)^\ell}{\ell!};
\]
dominated convergence yields \eqref{eq:three-K-limit}.

Rapid decay of $F_\varepsilon$ and its derivatives as $u\to+\infty$ is
immediate from \eqref{eq:three-F-epsilon}. As $u\to-\infty$, split
finitely many factors from $(t^{u+1};t)_\infty$ until the remaining
argument lies in a fixed compact interval. The finite product grows at
most like $\exp(\varepsilon u^2/2+O_\varepsilon(|u|))$, which is dominated
by the Gaussian $e^{-\varepsilon(u-\nu)^2}$. The same argument after
differentiation proves the Schwartz assertion.
\end{proof}

The Gaussian factor in \eqref{eq:three-F-Fourier} suppresses all
but the lowest frequencies in the Poisson summation, leading to a simple oscillatory behaviour:
\begin{theorem}\label{thm:three-R-asymptotic}
Set
\[
 \delta=\sqrt2-\pi+\arctan\sqrt2.
\]
As $t\uparrow1$,
\begin{equation}
 R(t)=\frac{2\sqrt{3\pi}(1-t)^2}
 {(t;t)_\infty\sqrt\varepsilon}
 \exp\left(\varepsilon\nu^2-\frac{\pi^2}{4\varepsilon}\right)
 \bigl(\cos(\pi\nu+\delta)+o(1)\bigr).
\label{eq:three-R-asymptotic}
\end{equation}
The error term 
is uniform as $t\uparrow1$.
\end{theorem}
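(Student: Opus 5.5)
We start from the bilateral identity \eqref{eq:three-R-bilateral} and apply Poisson summation to the alternating sum. Lemma~\ref{lem:three-F-Fourier} shows that $F_\varepsilon$ is Schwartz, so Poisson summation is legitimate and gives
\[
 \sum_{j\in\mathbb Z}(-1)^jF_\varepsilon(j)=\sum_{m\in\mathbb Z}\widehat F_\varepsilon\bigl(m+\tfrac12\bigr).
\]
By \eqref{eq:three-F-Fourier}, the term with index $\xi=m+\frac12$ carries the Gaussian factor $e^{-\pi^2\xi^2/\varepsilon}$. The two lowest frequencies $\xi=\pm\frac12$ each carry $e^{-\pi^2/(4\varepsilon)}$. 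Every other frequency has $|\xi|\ge\frac32$ and so carries at most $e^{-9\pi^2/(4\varepsilon)}$.

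\textbf{Bounding the tail.} The factor $K_\varepsilon(e^{i\pi\xi})$ has $|e^{i\pi\xi}|=1$, so it is uniformly bounded for $t$ close to $1$ by Lemma~\ref{lem:three-F-Fourier}. Hence the frequencies with $|\xi|\ge\frac32$ contribute in total
\[
 O\Bigl((1-t)\varepsilon^{-1/2}\sum_{|\xi|\ge3/2}e^{-\pi^2\xi^2/\varepsilon}\Bigr)=O\bigl((1-t)\varepsilon^{-1/2}e^{-9\pi^2/(4\varepsilon)}\bigr).
\]
This is exponentially smaller than the main terms, by a factor $e^{-2\pi^2/\varepsilon}$. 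There is one caveat: the main terms could be small where the cosine nearly vanishes. This is exactly why the conclusion is stated as $\cos(\pi\nu+\delta)+o(1)$ rather than as a relative error, and the absolute error form is compatible with that.

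\textbf{The two main terms.} At $\xi=\frac12$ we have $e^{i\pi\xi}=i$, and at $\xi=-\frac12$ we have $e^{i\pi\xi}=-i$. Since $F_\varepsilon$ is real, these two terms are complex conjugates. Their sum is therefore
\[
 2(1-t)\sqrt{\pi/\varepsilon}\,e^{-\pi^2/(4\varepsilon)}\,\operatorname{Re}\bigl(e^{-i\pi\nu}K_\varepsilon(i)\bigr).
\]
By \eqref{eq:three-K-limit}, $K_\varepsilon(i)=K_0(i)+o(1)$, with
\[
 K_0(i)=(\sqrt2 i-1)e^{-\sqrt2 i}.
\]
We compute $|\sqrt2 i-1|=\sqrt3$. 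For the argument, $\arg(-1+\sqrt2 i)=\pi-\arctan\sqrt2$, so
\[
 K_0(i)=\sqrt3\,e^{i(\pi-\arctan\sqrt2-\sqrt2)}=\sqrt3\,e^{-i\delta}.
\]
It follows that
\[
 \operatorname{Re}\bigl(e^{-i\pi\nu}K_0(i)\bigr)=\sqrt3\cos(\pi\nu+\delta).
\]
The $o(1)$ error in $K_\varepsilon(i)$ is uniform, because \eqref{eq:three-K-limit} holds uniformly on $|w|=1$, and $e^{-i\pi\nu}$ has modulus one.

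\textbf{Assembling.} Multiplying back by $(1-t)e^{\varepsilon\nu^2}/(t;t)_\infty$ as in \eqref{eq:three-R-bilateral} gives
\[
 R(t)=\frac{2\sqrt{3\pi}(1-t)^2}{(t;t)_\infty\sqrt\varepsilon}\exp\Bigl(\varepsilon\nu^2-\frac{\pi^2}{4\varepsilon}\Bigr)\bigl(\cos(\pi\nu+\delta)+o(1)\bigr),
\]
which is \eqref{eq:three-R-asymptotic}.

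The main obstacle is making the error uniform as $t\uparrow1$. Two ingredients are needed. First, the bound on $K_\varepsilon$ must be uniform in $\xi$, which holds because $|e^{i\pi\xi}|=1$ for every frequency. Second, the tail sum must be compared with the prefactor $e^{-\pi^2/(4\varepsilon)}$ rather than with the possibly vanishing cosine. Both follow from Lemma~\ref{lem:three-F-Fourier}, so the remaining work is bookkeeping. The only genuine care point is verifying that the argument of $K_0(i)$ gives exactly $-\delta$, including the sign conventions in the exponent $-2\pi i\xi\nu$.
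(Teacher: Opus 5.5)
Your proposal is correct and follows essentially the same route as the paper: Poisson summation of the alternating sum using Lemma~\ref{lem:three-F-Fourier}, discarding the frequencies $|\xi|\geq\frac32$ as $O\bigl((1-t)\varepsilon^{-1/2}e^{-9\pi^2/(4\varepsilon)}\bigr)$, and evaluating the two complex-conjugate terms at $\xi=\pm\frac12$ via $K_0(\pm i)=\sqrt3\,e^{\mp i\delta}$. You also spell out two points the paper states without comment: the argument computation giving $-\delta$, and why the two terms are conjugate (because $F_\varepsilon$ is real).
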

\begin{proof}
Poisson summation and Lemma~\ref{lem:three-F-Fourier} give
\[
 \sum_{j\in\mathbb Z}(-1)^jF_\varepsilon(j)
 =(1-t)\sqrt{\frac{\pi}{\varepsilon}}
 \sum_{n\in\mathbb Z}
 e^{-2\pi i(n-1/2)\nu-\pi^2(n-1/2)^2/\varepsilon}K_\varepsilon(e^{i\pi(n-1/2)}).
\]
The terms with $|n-1/2|\geq3/2$ are
$O((1-t)\varepsilon^{-1/2}e^{-9\pi^2/(4\varepsilon)})$. The two
remaining terms are complex conjugates, and
\[
 K_\varepsilon(-i)\longrightarrow(-1-i\sqrt2)e^{i\sqrt2}
 =\sqrt3e^{i\delta},
 \qquad
 K_\varepsilon(i)\longrightarrow\sqrt3e^{-i\delta}.
\]
Thus
\[
 \sum_{j\in\mathbb Z}(-1)^jF_\varepsilon(j)
 =2\sqrt{3\pi}(1-t)\varepsilon^{-1/2}
 e^{-\pi^2/(4\varepsilon)}
 \bigl(\cos(\pi\nu+\delta)+o(1)\bigr).
\]
Substitution into \eqref{eq:three-R-bilateral} proves the result.
\end{proof}

\begin{corollary}\label{cor:three-positive-zeros}
The function $R$ has infinitely many sign-changing zeros in $(0,1)$,
accumulating at $1$.
\end{corollary}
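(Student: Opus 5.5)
The plan is to read the sign of $R(t)$ off the asymptotic formula \eqref{eq:three-R-asymptotic} of Theorem~\ref{thm:three-R-asymptotic}. For $0<t<1$ the prefactor
\[
 \frac{2\sqrt{3\pi}(1-t)^2}{(t;t)_\infty\sqrt\varepsilon}
 \exp\left(\varepsilon\nu^2-\frac{\pi^2}{4\varepsilon}\right)
\]
is strictly positive, since $(t;t)_\infty>0$ and $\varepsilon>0$. So for $t$ close enough to $1$ the sign of $R(t)$ equals the sign of $\cos(\pi\nu+\delta)+o(1)$. Here the $o(1)$ is uniform as $t\uparrow1$.

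First, note that $\nu=\nu(t)=-\log\bigl((1-t)\sqrt{1+t}\bigr)/(-\log t)$ is a continuous real function on $(0,1)$, and $\nu\to+\infty$ as $t\uparrow1$, as stated after \eqref{eq:three-F-epsilon}. By the intermediate value theorem, for every sufficiently large integer $m$ there is a point $t_m\in(0,1)$ with $\pi\nu(t_m)+\delta=m\pi$. At such a point $\cos(\pi\nu+\delta)=(-1)^m$. We can choose these points so that $t_m\uparrow1$, for instance by taking $t_m$ to be the largest solution below some threshold, or simply the points where $\nu$ first reaches the required value as $t$ increases. Monotonicity of $\nu$ is not needed.

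Second, fix $t_0<1$ such that the $o(1)$ term in \eqref{eq:three-R-asymptotic} has absolute value less than $1/2$ for all $t\in(t_0,1)$. Then for every $t_m>t_0$ the sign of $R(t_m)$ is $(-1)^m$. Consecutive points $t_m$ and $t_{m+1}$ give opposite signs. Since $R$ is analytic, and in particular continuous, on $(0,1)$ by Proposition~\ref{prop:three-RS-limits}, $R$ has a zero strictly between $t_m$ and $t_{m+1}$ at which it changes sign. These intervals between consecutive points are disjoint, and the points accumulate only at $1$, so we obtain infinitely many distinct sign-changing zeros accumulating at $1$.

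The zeros cannot accumulate inside $(0,1)$, because $R$ is analytic and not identically zero ($R(0)=1$). This is consistent with the accumulation point being $1$. The only real subtlety is the uniformity of the error term, and that is already stated in Theorem~\ref{thm:three-R-asymptotic}. So no step is a genuine obstacle; the main point requiring care is choosing the $t_m$ in order so that consecutive ones bracket a sign change.
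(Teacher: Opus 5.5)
Your proposal is correct and follows the paper's proof: the positive prefactor in Theorem~\ref{thm:three-R-asymptotic}, phase points where $\pi\nu+\delta\in\pi\mathbb Z$ giving alternating signs, and the intermediate value theorem between consecutive phase points. The only difference is minor. The paper picks unique phase points using the eventual monotonicity of $\nu$ from \eqref{eq:three-nu-asymptotic}; your first-passage choice of $t_m$ avoids monotonicity, and it is strictly increasing and tends to $1$ because $\nu$ is continuous with $\nu\to0$ as $t\downarrow0$ and $\nu\to\infty$ as $t\uparrow1$. Your appeal to analyticity also correctly upgrades ``a zero'' to ``a sign-changing zero''.
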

\begin{proof}
The prefactor in \eqref{eq:three-R-asymptotic} is positive, while
\begin{equation}\label{eq:three-nu-asymptotic}
 \nu(t)=\frac{\log(1/\varepsilon)-\frac12\log2}{\varepsilon}
 +\frac34+O(\varepsilon),
 \qquad \varepsilon\downarrow0.
\end{equation}
It tends to infinity and is eventually strictly increasing as a function
of $t$. For each sufficiently large integer $n$, choose the unique
$\widehat t_n\to1$ with $\pi\nu(\widehat t_n)+\delta=n\pi$. Theorem
\ref{thm:three-R-asymptotic} gives
$\operatorname{sign}R(\widehat t_n)=(-1)^n$.
The intermediate value theorem supplies a zero between consecutive phase
points.
\end{proof}

It remains to rule out cancellation. For fixed $t$ with $0<|t|<1$,
write \eqref{eq:three-scaled-recurrence} as
\begin{equation}\label{eq:three-ab-recurrence}
 X_k=a_kX_{k-1}-b_kX_{k-2},
\end{equation}
where
\[
 a_k=1-t^{2k-1}\frac{1-t+t^2}{(1-t)^2},
 \qquad
 b_k=\frac{t^{4k-1}}{(1+t)^2(1-t)^4}.
\]
Then $\sum_{k\geq2}(|a_k-1|+|b_k|)<\infty$.

For $2\leq s\leq N$, let
\[
 D_{s,N}=\det
 \begin{pmatrix}
 a_s&1&&&\\
 b_{s+1}&a_{s+1}&1&&\\
 &b_{s+2}&a_{s+2}&\ddots&\\
 &&\ddots&\ddots&1\\
 &&&b_N&a_N
 \end{pmatrix},
\]
with $D_{N+1,N}=1$ and $D_{N+2,N}=0$.

\begin{lemma}\label{lem:three-tail-continuant}
For every $s\geq2$, the limit $D_s=\lim_{N\to\infty}D_{s,N}$ exists,
and
\begin{equation}\label{eq:three-tail-recurrence}
 D_s=a_sD_{s+1}-b_{s+1}D_{s+2},
 \qquad D_s\longrightarrow1.
\end{equation}
For any solution of \eqref{eq:three-ab-recurrence},
\begin{equation}\label{eq:three-limit-functional}
 \lim_{k\to\infty}X_k=D_2X_1-b_2D_3X_0.
\end{equation}
\end{lemma}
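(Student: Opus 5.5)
The plan is to treat the continuants $D_{s,N}$ as solutions of the recurrence \eqref{eq:three-ab-recurrence} in the variable $N$, and then reuse the summability argument from Proposition~\ref{prop:three-RS-limits}. Throughout, fix $t$ with $0<|t|<1$ and put $\eta_k=|a_k-1|+|b_k|$, so that $\sum_k\eta_k<\infty$.

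\emph{Two expansions of the continuant.} Expanding the tridiagonal determinant $D_{s,N}$ along its first row gives
\[
 D_{s,N}=a_sD_{s+1,N}-b_{s+1}D_{s+2,N}.
\]
Here the superdiagonal entry $1$ multiplies a minor whose first column is $(b_{s+1},0,\dots)^T$. The conventions $D_{N+1,N}=1$ and $D_{N+2,N}=0$ make this valid for every $s\le N$. Expanding instead along the last row gives
\[
 D_{s,N}=a_ND_{s,N-1}-b_ND_{s,N-2}\qquad(N\ge s+1),
\]
with starting values $D_{s,s-1}=1$ and $D_{s,s}=a_s$. So for fixed $s$ the sequence $N\mapsto D_{s,N}$ solves \eqref{eq:three-ab-recurrence}, and the first-row expansion is the recurrence satisfied in $s$.

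\emph{Existence of the limit and $D_s\to1$.} From the last-row recurrence, $m_N=\max(|D_{s,N}|,|D_{s,N-1}|)$ satisfies $m_N\le(1+\eta_N)m_{N-1}$. Hence $|D_{s,N}|\le P_s:=\prod_{k\ge s}(1+\eta_k)$, a bound uniform in $N$ and in $s\ge2$. Writing
\[
 D_{s,N}-D_{s,N-1}=(a_N-1)D_{s,N-1}-b_ND_{s,N-2}
\]
gives $|D_{s,N}-D_{s,N-1}|\le\eta_NP_s$. So $(D_{s,N})_N$ is Cauchy and $D_s$ exists. Summing the increments from $N=s$ (where $D_{s,s-1}=1$) gives
\[
 |D_s-1|\le P_2\sum_{k\ge s}\eta_k\to0 .
\]
Letting $N\to\infty$ in the first-row expansion, with $s$ fixed, yields \eqref{eq:three-tail-recurrence}.

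\emph{The limit functional.} I will show by induction on $N\ge1$ that any solution of \eqref{eq:three-ab-recurrence} satisfies
\[
 X_N=D_{2,N}X_1-b_2D_{3,N}X_0 .
\]
The case $N=1$ holds because $D_{2,1}=1$ and $D_{3,1}=0$. The case $N=2$ holds because $D_{2,2}=a_2$ and $D_{3,2}=1$, which is exactly the recurrence at $k=2$. For the induction step, both $N\mapsto D_{2,N}$ and $N\mapsto D_{3,N}$ satisfy the last-row recurrence for $N\ge4$. At $N=3$ one checks directly that $D_{3,3}=a_3=a_3D_{3,2}-b_3D_{3,1}$. So the right-hand side obeys the same recurrence as $X_N$ for $N\ge3$. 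Letting $N\to\infty$ then gives \eqref{eq:three-limit-functional}.

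\emph{Main obstacle.} Nothing here is deep. The delicate step is bookkeeping: the index conventions $D_{N+1,N}=1$ and $D_{N+2,N}=0$ must make both expansions valid at the boundary cases, and the bound $P_s$ must be uniform so that the tail estimate $D_s\to1$ follows.
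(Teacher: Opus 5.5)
Your proof is correct, but you establish the existence of $D_s$ and the limit $D_s\to1$ by a different mechanism from the paper.

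The paper expands the tridiagonal determinant directly. The nonidentity permutations contributing to $D_{s,N}$ are products of disjoint adjacent transpositions, each contributing a factor $-b_k$. Writing each diagonal entry as $1+u_k$ and dropping the disjointness constraint gives $|D_{s,N}-1|\le\exp\bigl(\sum_{k=s}^N|u_k|+\sum_{k=s+1}^N|b_k|\bigr)-1$. So $D_s$ is the sum of an absolutely convergent infinite expansion, and $D_{s,N}$ is its restriction to index sets inside $[s,N]$.

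You instead use the last-row expansion to see that $N\mapsto D_{s,N}$ is the solution of $X_N=a_NX_{N-1}-b_NX_{N-2}$ ($N\ge s$) with $(X_{s-2},X_{s-1})=(0,1)$. You then rerun the boundedness-plus-summable-increments argument of Proposition~\ref{prop:three-RS-limits}, obtaining $|D_s-1|\le P_2\sum_{k\ge s}\eta_k$.

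What each route buys:
\begin{itemize}
\item The paper's route produces $D_s$ explicitly as a convergent series with a closed-form bound, without needing a second recurrence for the continuants.
\item Yours avoids the combinatorics of the determinant expansion and reuses an argument already present in the paper.
\item Yours also makes the identity $X_N=D_{2,N}X_1-b_2D_{3,N}X_0$ transparent as a superposition of two solutions of the same recurrence, with the base cases checked explicitly. The paper simply says ``direct induction''.
\end{itemize}
The first-row expansion giving \eqref{eq:three-tail-recurrence}, and the final passage $N\to\infty$, are the same in both.

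One tiny bookkeeping point: at the step $N=3$ of your induction you also need $D_{2,3}=a_3D_{2,2}-b_3D_{2,1}$, not only the check for $D_{3,3}$. This is already covered by your last-row expansion, which you stated for $N\ge s+1$.
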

\begin{proof}
Put $u_k=a_k-1$. In the determinant expansion of $D_{s,N}$, each
nonidentity permutation is a collection of disjoint transpositions of
adjacent indices. Expanding each diagonal entry as $1+u_k$ and dropping
the disjointness restriction gives
\[
 |D_{s,N}-1|\leq
 \exp\left(\sum_{k=s}^N|u_k|+\sum_{k=s+1}^N|b_k|\right)-1.
\]
The associated infinite expansion is absolutely convergent, proving the
existence of $D_s$ and the limit $D_s\to1$. Expansion in the first row
gives the recurrence in \eqref{eq:three-tail-recurrence}. Finally, direct
induction gives
$X_N=D_{2,N}X_1-b_2D_{3,N}X_0$; let $N\to\infty$.
\end{proof}

\begin{theorem}\label{thm:three-noncancellation}
The analytic functions $R$ and $S$ have no common zero in $|t|<1$.
\end{theorem}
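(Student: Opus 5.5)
The idea is to apply the limit functional of Lemma~\ref{lem:three-tail-continuant} to both solutions $(R_k)$ and $(S_k)$ of \eqref{eq:three-ab-recurrence}, and then use the Wronskian of Proposition~\ref{prop:three-wronskian} to exclude a simultaneous vanishing. At $t=0$ we have $R(0)=1$, so only $0<|t|<1$ needs treatment. Fix such a $t$. By \eqref{eq:three-limit-functional},
\[
 R(t)=D_2R_1-b_2D_3R_0,\qquad S(t)=D_2S_1-b_2D_3S_0 .
\]
Suppose $R(t)=S(t)=0$. The vector $(D_2,-b_2D_3)$ then annihilates both $(R_1,R_0)$ and $(S_1,S_0)$. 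The determinant of these two vectors is $R_1S_0-R_0S_1=W_1=t/(1-t)\neq0$. Hence $D_2=0$ and $b_2D_3=0$, and since $b_2\neq0$ we also get $D_3=0$.

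From there I would propagate upward with the tail recurrence \eqref{eq:three-tail-recurrence}. Because $D_2=a_2D_3-b_3D_4$, we have $b_3D_4=0$, so $D_4=0$. By induction, $D_s=D_{s+1}=0$ and $b_{s+1}\neq0$ give $D_{s+2}=0$. So $D_s=0$ for every $s\ge2$. This contradicts $D_s\to1$ from Lemma~\ref{lem:three-tail-continuant}. The only inputs are that $b_k\neq0$ for $0<|t|<1$, which is evident from its formula, and that $W_1\neq0$.

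Some care is needed with the linear algebra at the start. The relation \eqref{eq:three-limit-functional} holds for every solution of the recurrence, with $D_2,D_3$ independent of the solution. So the $2\times2$ system I use is exactly
\[
 \begin{pmatrix}R_1&R_0\\ S_1&S_0\end{pmatrix}\begin{pmatrix}D_2\\ -b_2D_3\end{pmatrix}=0,
\]
and its matrix is invertible.

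The step I expect to need the most attention is confirming that Lemma~\ref{lem:three-tail-continuant} applies at every point of the punctured disc and not only near $0$. This requires that $\sum_k(|a_k-1|+|b_k|)<\infty$ for each fixed $t$ with $0<|t|<1$, which holds because these terms are geometric in $|t|^{2k}$. It also requires that the limit in \eqref{eq:three-limit-functional} coincides with the analytic limits $R(t)$ and $S(t)$ of Proposition~\ref{prop:three-RS-limits}. That is immediate, since both are pointwise limits of the same sequences $R_k(t)$ and $S_k(t)$.
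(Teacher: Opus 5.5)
Your proof is correct and is the paper's own argument. You apply \eqref{eq:three-limit-functional} to $(R_k)$ and $(S_k)$, use $W_1=t/(1-t)\neq0$ to force $D_2=D_3=0$, and then propagate through \eqref{eq:three-tail-recurrence} with $b_k\neq0$ to contradict $D_s\to1$; your checks that Lemma~\ref{lem:three-tail-continuant} applies at every fixed $0<|t|<1$ and that its limit coincides with $R(t)$, $S(t)$ are accurate and only make explicit what the paper leaves implicit.
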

\begin{proof}
At $t=0$ this follows from $R(0)=1$ and $S(0)=-1$. Fix
$0<|t|<1$. Applying \eqref{eq:three-limit-functional} to $(R_k)$ and
$(S_k)$ gives
\[
 R=D_2R_1-b_2D_3R_0,
 \qquad
 S=D_2S_1-b_2D_3S_0.
\]
If $R=S=0$, Proposition~\ref{prop:three-wronskian} at $k=1$ gives
\[
 R_1S_0-R_0S_1=\frac{t}{1-t}\neq0.
\]
Thus the preceding two equations force $D_2=D_3=0$. Since every $b_k$ is
nonzero, \eqref{eq:three-tail-recurrence} then forces $D_s=0$ for every
$s\geq2$, contradicting $D_s\to1$.
\end{proof}

\subsection{Singularities and asymptotics}\label{sec:three-asymptotics}
The map
$
 z=z(t)=\frac{t}{(1+t)^2}
$
is injective in $|t|<1$, since the two preimages of a generic value are
reciprocal, and
$
 z'(t)=\frac{1-t}{(1+t)^3}
$
does not vanish there.

\begin{proposition}\label{prop:three-singularity}
If $|t_0|<1$, $R(t_0)=0$, and $S(t_0)\neq0$, then $D^{(3)}$ has a
singularity at $z_0=z(t_0)$. If $t_0$ is a simple zero, the
singularity is a simple pole.
\end{proposition}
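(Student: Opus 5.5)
The plan is to transport the identity of Theorem~\ref{thm:three-quotient} from a neighbourhood of the origin to the whole disc $|t|<1$, and then read off the local behaviour at $t_0$.

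\emph{Step 1: continuation to all of $|t|<1$.} Since $z(t)=t/(1+t)^2$ is injective on $|t|<1$ with nonvanishing derivative, it is a biholomorphism onto its image $\Omega=z(\{|t|<1\})$. The inverse is the branch $t(z)$ of \eqref{eq:t-of-z}, and $\Omega$ is the slit plane $\mathbb C\setminus[1/4,\infty)$. By Proposition~\ref{prop:three-RS-limits}, $R$ and $S$ are analytic on $|t|<1$. Hence
\[
 M(z)=1-z\,\frac{S(t(z))}{R(t(z))}
\]
is meromorphic on $\Omega$. By \eqref{eq:three-D-quotient} it agrees with $D^{(3)}$ near $z=0$. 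So $M$ is the analytic continuation of $D^{(3)}$ along any path in $\Omega$ avoiding the zeros of $R\circ t$, and these zeros are isolated because $R\not\equiv0$.

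\emph{Step 2: the singularity at $z_0$.} Suppose $R(t_0)=0$ and $S(t_0)\ne0$. Then $S(t)/R(t)$ has a genuine pole at $t_0$, of order equal to the order $m\ge1$ of the zero of $R$. Composing with the local biholomorphism $t(z)$ preserves the pole order, since $t'(z_0)\ne0$. So $S(t(z))/R(t(z))$ has a pole of order $m$ at $z_0$.

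Multiplication by $z$ does not remove this pole unless $z_0=0$. That case cannot occur: $z_0=0$ forces $t_0=0$, and $R(0)=1$. Therefore $M$ has a pole of order $m$ at $z_0$, and the continuation of $D^{(3)}$ is unbounded near $z_0$, so $z_0$ is a singularity.

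For a simple zero, $m=1$. Writing $R(t)=R'(t_0)(t-t_0)+\cdots$ and $t-t_0=t'(z_0)(z-z_0)+\cdots$ gives a simple pole with residue
\[
 -\frac{z_0\,S(t_0)}{R'(t_0)\,z'(t_0)}\ne0.
\]

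\emph{Main obstacle.} The key subtlety is making precise what ``singularity of $D^{(3)}$'' means for $z_0$ beyond the radius of convergence. I would phrase it as: the continuation of $D^{(3)}$ along paths in $\Omega$ avoiding other zeros of $R\circ t$ has a pole at $z_0$. This needs only Step 1.

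For a zero $t_0$ nearest the origin, one can say more. There $z_0$ lies on the circle of convergence, and the power series of $D^{(3)}$ genuinely cannot extend analytically past $z_0$, because the continuation on the disc of radius $|z_0|$ is given by $M$.

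Everything else is routine. The nonvanishing of $S$ at the zeros of $R$, which the hypothesis assumes, is in any case supplied by Theorem~\ref{thm:three-noncancellation}.
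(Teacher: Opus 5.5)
Your proof is correct and is essentially the paper's: the quotient formula \eqref{eq:three-D-quotient} continues $D^{(3)}$ meromorphically over the image of the unit disc, and since $z'(t_0)\neq0$ the change of variables is locally biholomorphic, so the order-$m$ pole of $S/R$ at $t_0$ becomes a pole of the same order at $z_0$ (not cancelled by the factor $z$, because $z_0\neq0$). Two small slips lie outside the core argument: since $t'(z_0)=1/z'(t_0)$, the residue is $-z_0S(t_0)z'(t_0)/R'(t_0)$, matching the paper's $C^{(3)}=S(t^{(3)})z'(t^{(3)})/R'(t^{(3)})$; and in your closing remark the zero to single out is the one whose image $z_0$ is nearest the origin, because $t_0$ being nearest the origin does not imply this.
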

\begin{proof}
Equation \eqref{eq:three-D-quotient} gives a meromorphic continuation of
$D^{(3)}(z(t))$ along every path in the unit disc avoiding the isolated zeros
of $R$. Approaching $t_0$ along such a path, the quotient $S/R$ is
singular. Since $z'(t_0)\neq0$, the change of variables is locally
biholomorphic and preserves the order of the singularity.
\end{proof}

\begin{theorem}\label{thm:three-non-D-finite}
The generating function $D^{(3)}$ has infinitely many finite singularities.
It is not D-finite, and $(d_n^{(3)})_{n\geq0}$ is not P-recursive.
\end{theorem}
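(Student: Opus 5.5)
The argument combines the results already established. By Corollary~\ref{cor:three-positive-zeros}, $R$ has infinitely many zeros $t_n\in(0,1)$ accumulating at $1$. By Theorem~\ref{thm:three-noncancellation}, $S(t_n)\neq0$ for every $n$. Proposition~\ref{prop:three-singularity} then shows that $D^{(3)}$ has a singularity at each $z_n=z(t_n)$.

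To count these singularities, I use the injectivity of $z$ on the unit disc, noted at the start of \S\ref{sec:three-asymptotics}. Since the $t_n$ are distinct, so are the $z_n=t_n/(1+t_n)^2\in(0,1/4)$. These are finite, real and positive, and they accumulate at $z=1/4$. Strictly, the continuation is along paths in the $t$-disc, i.e.\ in the $z$-plane minus the cut $[1/4,\infty)$. The points $z_n$ lie in the open segment $(0,1/4)$, which is reached by straight radial paths avoiding the other zeros of $R$ except at the endpoint. Hence the $z_n$ are genuine singularities of the analytic continuation of $D^{(3)}$ on this simply connected domain. This proves the first assertion.

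For non-D-finiteness, I use the classical fact that a D-finite series $f$, satisfying $\sum_{i} p_i(z)f^{(i)}=0$ with $p_r\neq0$, can only be singular on any analytic continuation at zeros of the leading coefficient $p_r$. There are finitely many such zeros. Indeed, away from them, Cauchy's existence theorem for linear ODEs continues every local solution analytically along any path.

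So suppose $D^{(3)}$ were D-finite. Its continuation along paths in $\mathbb C\setminus[1/4,\infty)$ would then be singular only at the finitely many roots of $p_r$. This contradicts the infinitely many distinct singular points $z_n$. Finally, by the equivalence with P-recursiveness recalled in \S\ref{sec:introduction}~\cite{StanleyDF}, $(d_n^{(3)})$ is not P-recursive.

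The main subtlety is to make sure that the singularities found on the $t$-side are honest singularities of a single-valued continuation of $D^{(3)}$ in $z$. Otherwise the D-finite argument cannot be applied. The injectivity of $z(t)$ together with the fact that $z'(t)\neq0$ handles this. One must also check that $z_n<1/4$ stays within the domain where the quotient formula~\eqref{eq:three-D-quotient} represents the continuation obtained from the origin. This holds because the quotient $-S/R$ is meromorphic on the whole disc $|t|<1$, so it agrees with the continuation of $D^{(3)}$ wherever both are defined.
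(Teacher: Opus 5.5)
Your proposal is correct and follows the paper's own route. Like the paper, you combine Corollary~\ref{cor:three-positive-zeros}, Theorem~\ref{thm:three-noncancellation} and Proposition~\ref{prop:three-singularity} to get infinitely many distinct finite singularities, use that a D-finite function can be singular only at the finitely many zeros of its leading coefficient, and conclude via \cite{StanleyDF}. One harmless slip: the radial segment $[0,z_n]$ in fact passes through the earlier poles $z_1,\dots,z_{n-1}$, but small detours around them suffice, since $1-zS(t(z))/R(t(z))$ is a single-valued meromorphic function on $\mathbb C\setminus[1/4,\infty)$.
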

\begin{proof}
Corollary~\ref{cor:three-positive-zeros}, Theorem
\ref{thm:three-noncancellation}, and Proposition
\ref{prop:three-singularity} provide infinitely many distinct finite
singularities. A solution of a linear differential equation with
polynomial coefficients can have finite singularities only at the finitely
many zeros of its leading coefficient. Thus $D^{(3)}$ is not D-finite. The
equivalence between D-finiteness of an ordinary generating function and
P-recursiveness of its coefficients gives the last assertion
\cite{StanleyDF}.
\end{proof}

The first of these singularities controls the growth of the coefficients.
Recall that the direct sum $\alpha\oplus\beta$ places $\alpha$ below and
to the left of $\beta$.
The quotient formula locates the poles, but does not by itself
show that the first positive pole is simple or that it is the only
singularity on its circle. To prove those facts, we use the direct-sum
decomposition to introduce a generating function with nonnegative
coefficients.

\begin{lemma}\label{lem:three-sum-decomposition}
The class $\Dthree$ is closed under direct sums. If $i_n$ is the number
of its sum-indecomposable members of size $n$ and
$I(z)=\sum_{n\geq1}i_nz^n$, then
\begin{equation}\label{eq:three-D-sequence}
 D^{(3)}(z)=\frac1{1-I(z)}.
\end{equation}
\end{lemma}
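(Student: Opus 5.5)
The plan is to prove closure under direct sums first and then derive the sequence formula from unique decomposition into sum-indecomposable components.

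\emph{Closure.} Let $\alpha,\beta\in\Dthree$ and suppose $\alpha\oplus\beta$ contains one of the basis patterns $4123$, $4231$, $4312$. An occurrence lies entirely in $\alpha$, or entirely in the shifted copy of $\beta$, or it uses entries from both. In the mixed case the entries taken from $\alpha$ lie below and to the left of those taken from $\beta$. So the pattern itself splits as a nontrivial direct sum $\gamma\oplus\gamma'$: its first few positions would have to carry its smallest values. Each basis pattern begins with its maximum $4$, so no proper prefix has this property. Hence every basis pattern is sum-indecomposable. An occurrence of a sum-indecomposable pattern in $\alpha\oplus\beta$ lies in a single summand, which contradicts $\alpha,\beta\in\Dthree$. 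This gives closure. The same argument shows that every sum-component of a member of $\Dthree$ lies in $\Dthree$, since the class is closed under taking patterns.

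\emph{Decomposition.} Every nonempty permutation has a unique expression $\pi=\pi_1\oplus\cdots\oplus\pi_m$ with each $\pi_i$ sum-indecomposable and nonempty. The blocks are cut at the positions $p$ for which $\pi(\{1,\dots,p\})=\{1,\dots,p\}$; these cut points are canonical, which gives uniqueness. By the previous paragraph, $\pi\in\Dthree$ exactly when every $\pi_i\in\Dthree$.

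\emph{Counting.} The two steps above give a size-preserving bijection between $\Dthree$ and finite sequences of sum-indecomposable members of $\Dthree$, with the empty permutation corresponding to the empty sequence. Therefore
\[
D^{(3)}(z)=\sum_{m\geq0}I(z)^m=\frac{1}{1-I(z)},
\]
as an identity of formal power series. This is valid because $I(z)$ has no constant term.

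There is no substantial obstacle. The only point needing care is the observation that all three basis patterns are sum-indecomposable, and this holds because each of them starts with $4$.
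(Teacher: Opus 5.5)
Your proposal is correct and follows essentially the same route as the paper's proof. You show each basis pattern is sum-indecomposable because it begins with its maximum, so a mixed occurrence in $\alpha\oplus\beta$ is impossible. You then use the canonical factorization into sum-indecomposable components, which lie in $\Dthree$ by closure under patterns, to obtain $D^{(3)}=1/(1-I)$ from the sequence construction.
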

\begin{proof}
First we prove sum closure. Each basis element of $\Dthree$ begins with
its largest entry and is therefore sum indecomposable: in a nontrivial
direct sum, the largest entry belongs to the second block. Now let
$\alpha,\beta\in\Dthree$. Any pattern occurrence in
$\alpha\oplus\beta$ that uses entries from both blocks is itself a
nontrivial direct sum, because its entries from $\alpha$ all occur before
and below those from $\beta$. Such an occurrence cannot be one of the
three sum-indecomposable basis elements. An occurrence contained in one
block is also impossible, since both blocks avoid the basis. Hence
$\alpha\oplus\beta\in\Dthree$.

Every nonempty permutation has a unique factorization
\[
 \pi=\pi_1\oplus\cdots\oplus\pi_k
\]
into sum-indecomposable permutations. Indeed, the first component ends
at the least index $s$ for which the first $s$ entries have values
$\{1,\ldots,s\}$, and the construction then continues recursively.
Because a permutation class is closed under taking patterns, all the
components of a member of $\Dthree$ lie in $\Dthree$; conversely, their
direct sum lies in $\Dthree$ by the closure just proved. Therefore a
member of $\Dthree$ is uniquely a sequence of sum-indecomposable members,
and the sequence construction gives
\[
 D^{(3)}(z)=1+I(z)+I(z)^2+\cdots=\frac1{1-I(z)}.
\]
\end{proof}

One can easily compute the first few orders of $I(z)$:
\[
I(z)=z+z^2+3z^3+10z^4+\cdots
\]

Note that poles of $D^{(3)}(z)$ correspond to solutions of $I(z)=1$.

\begin{theorem}\label{thm:three-dominant-pole}
Let
\begin{equation}\label{eq:three-tstar-rho}
 t^{(3)}=\min\{t\in(0,1):R(t)=0\},
 \qquad
 \rho^{(3)}=z(t^{(3)}).
\end{equation}
Then $\rho^{(3)}$ is the radius of convergence of $D^{(3)}$, its unique
singularity on $|z|=\rho^{(3)}$, and a simple pole. For some
$r^{(3)}\in(\rho^{(3)},1/4)$,
\begin{equation}\label{eq:three-dn-asymptotic}
 d_n^{(3)}=C^{(3)}(\rho^{(3)})^{-n}+O((r^{(3)})^{-n}),
 \qquad
 C^{(3)}=\frac1{\rho^{(3)}I'(\rho^{(3)})}
 =\frac{S(t^{(3)})z'(t^{(3)})}{R'(t^{(3)})}>0.
\end{equation}
In particular, $t^{(3)}$ is a simple zero of $R$.
\end{theorem}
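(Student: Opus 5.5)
The strategy is to use the sequence decomposition of Lemma~\ref{lem:three-sum-decomposition}. Since $I$ has nonnegative coefficients, it is a Pringsheim-type object, and the dominant pole comes from the first solution of $I(z)=1$ on the positive axis. Throughout I use $d_n^{(3)}\le 4^n$, which makes $D^{(3)}$ analytic for $|z|<1/4$. This bound should come from the fact that $I$ is dominated by the count of a simpler class, or from the quotient formula \eqref{eq:three-D-quotient}: $z(t)$ maps the unit disc onto $\mathbb C\setminus[1/4,\infty)$, so singularities of $D^{(3)}$ in $|z|<1/4$ come only from zeros of $R$. On $0<z<1/4$, the relation $D^{(3)}=1/(1-I)$ shows that $I=1-1/D^{(3)}$ continues meromorphically to $|z|<1/4$. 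Since $I$ has nonnegative coefficients, Pringsheim's theorem applies: either $I$ is analytic on $|z|<1/4$, or its radius of convergence $\rho_I$ is a singularity on the positive axis.

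The first step is to show that $\rho^{(3)}$ is the radius of convergence of $D^{(3)}$. Every coefficient of $D^{(3)}$ is positive, so by Pringsheim its radius $\rho_D$ is a singular point on $(0,\infty)$. On $(0,1/4)$, $D^{(3)}(z(t))=1-zS(t)/R(t)$, and by Theorem~\ref{thm:three-noncancellation} singularities there occur exactly at the zeros of $R$. Hence the first singularity on the positive axis is $z(t^{(3)})=\rho^{(3)}$. Here the minimum in \eqref{eq:three-tstar-rho} exists by Corollary~\ref{cor:three-positive-zeros} together with isolatedness of zeros and $R(0)=1$, and $z$ is increasing on $(0,1)$. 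So $\rho_D=\rho^{(3)}$.

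The second step shows that $I$ is analytic on a disc of radius exceeding $\rho^{(3)}$. For $0<z<\rho^{(3)}$, $D^{(3)}$ is finite and positive, so $I(z)<1$ there. As $z\uparrow\rho^{(3)}$, $D^{(3)}\to\infty$, and therefore $I(z)\to1$. Since $I=1-1/D^{(3)}$, the pole of $D^{(3)}$ becomes a regular point of $I$ with value $1$. More precisely, $I$ is meromorphic near $\rho^{(3)}$ and bounded on $(0,\rho^{(3)})$. By Pringsheim, if $I$ had radius $\rho_I\le\rho^{(3)}$, then $I$ would be singular at $\rho_I$. But $1/D^{(3)}$ is analytic wherever $D^{(3)}$ is analytic and nonzero, or has a pole. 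The zeros of $D^{(3)}$ on $(0,\rho^{(3)}]$ are excluded by positivity. So $I$ is analytic on $|z|<\rho^{(3)}+\eta$ for some $\eta>0$.

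The third step handles uniqueness and simplicity. On $|z|=\rho^{(3)}$ with $z\ne\rho^{(3)}$, we have $|I(z)|\le I(|z|)=1$, with equality forcing all the terms $i_nz^n$ to be aligned. Since $i_1=i_2=1>0$, both $z$ and $z^2$ would have to be positive reals, so $z=\rho^{(3)}$. Hence $I(z)\ne1$ elsewhere on the circle, and by compactness there are no zeros of $1-I$ in a slightly larger annulus except at $\rho^{(3)}$. The pole is simple because $I'(\rho^{(3)})>0$, which makes the zero of $1-I$ simple. Transferring back through $z'(t)\ne0$, $t^{(3)}$ is a simple zero of $R$. A standard subtraction of the principal part $\frac{1}{\rho^{(3)}I'(\rho^{(3)})}\cdot\frac{1}{1-z/\rho^{(3)}}$ and Cauchy's estimate on $|z|=r^{(3)}$ then give \eqref{eq:three-dn-asymptotic}.

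Matching residues yields the second formula for $C^{(3)}$. The residue of $-zS/R$ in $t$ is $-t^{(3)}\cdots$, multiplied by $z'(t^{(3)})$ to convert to $z$. The sign bookkeeping is routine, and positivity of $C^{(3)}$ follows from the first form. I expect the main obstacle to be the bound $d_n^{(3)}\le 4^n$ (or $\rho^{(3)}<1/4$ together with analyticity up to $1/4$), since this is what legitimises the Pringsheim argument for $D^{(3)}$ within the region where the quotient formula is valid. I would derive it from the quotient formula itself: $D^{(3)}$ is meromorphic on $\mathbb C\setminus[1/4,\infty)$, and its radius is the smaller of $1/4$ and the first positive pole, which is $\rho^{(3)}<1/4$.
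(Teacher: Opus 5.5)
Your argument is essentially the paper's: meromorphy of $D^{(3)}$ on $|z|<1/4$ from the quotient formula and Theorem~\ref{thm:three-noncancellation}, Pringsheim to identify the radius with the first positive pole $\rho^{(3)}$, and $I(\rho^{(3)})=1$ plus the triangle inequality with $i_1>0$ for uniqueness on the circle. Then come $I'(\rho^{(3)})\geq i_1>0$ for simplicity, residue matching through $z'(t^{(3)})\neq0$ and $S(t^{(3)})\neq0$, and a Cauchy estimate after subtracting the principal part. Your one variation is to show first, via Pringsheim applied to $I$, that $I$ has radius of convergence larger than $\rho^{(3)}$. The paper instead uses absolute convergence of $I$ on $|z|=\rho^{(3)}$ and a radial limit to get $I(\zeta)=1$ at a putative pole $\zeta$. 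Both work.

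However, the premise $d_n^{(3)}\le 4^n$, which you say you use ``throughout'' and call the main obstacle, is false and must be removed. It would make $D^{(3)}$ analytic on $|z|<1/4$, contradicting the pole at $\rho^{(3)}\approx0.2238$; equivalently, $(\rho^{(3)})^{-1}\approx4.468>4$. It is therefore not an obstacle to overcome but a statement that cannot be proved. Your steps only use that $D^{(3)}$ is \emph{meromorphic} on $|z|<1/4$, with poles exactly at $z(t)$ for zeros $t$ of $R$. That is immediate: $t(z)$ is analytic on $|z|<1/4$ with $|t(z)|<1$, since $|t|=1$ forces $|z(t)|=|1+t|^{-2}\geq1/4$. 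So \eqref{eq:three-D-quotient} and Theorem~\ref{thm:three-noncancellation} give it directly, as your own closing sentence indicates. With ``analytic'' replaced by ``meromorphic'' in your first paragraph and the ``main obstacle'' remark deleted, the proof is complete.
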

\begin{proof}
The inverse $t(z)$ in \eqref{eq:t-of-z} is analytic in $|z|<1/4$ and
takes values in $|t|<1$. Indeed, if
$|t|=1$, then $|t/(1+t)^2|=|1+t|^{-2}\geq1/4$. The quotient formula and
Theorem~\ref{thm:three-noncancellation} therefore make $D^{(3)}$ meromorphic
in $|z|<1/4$, with poles precisely at the images of zeros of $R$.

Positive zeros exist and are isolated, so $t^{(3)}$ is well defined and
$\rho^{(3)}$ is a singularity. If the radius of convergence were smaller
than $\rho^{(3)}$, the Vivanti--Pringsheim theorem would produce a positive singularity
whose inverse image lies in $(0,t^{(3)})$, contradicting the quotient formula.
Thus the radius is $\rho^{(3)}$.

For real $s\uparrow\rho^{(3)}$, positivity of the coefficients and the pole at
$\rho^{(3)}$ give $D^{(3)}(s)\to+\infty$. Equation
\eqref{eq:three-D-sequence} then yields
\begin{equation}\label{eq:three-I-rho}
 I(\rho^{(3)})=\sum_{n\geq1}i_n(\rho^{(3)})^n=1.
\end{equation}
Suppose $\zeta$ is a pole with $|\zeta|=\rho^{(3)}$. Letting $r\uparrow1$ in
$I(r\zeta)=1-D^{(3)}(r\zeta)^{-1}$ gives $I(\zeta)=1$, since the series for
$I$ converges absolutely on the circle. Hence
\[
 1=|I(\zeta)|\leq\sum_{n\geq1}i_n|\zeta|^n=I(\rho^{(3)})=1.
\]
Equality in the triangle inequality, together with say $i_1=1$, forces
$\zeta=\rho^{(3)}$. Thus the dominant singularity is unique.

Near $\rho^{(3)}$, the function $\widetilde I=1-1/D^{(3)}$ is analytic and agrees
with $I$ on the interval to the left. Therefore
\[
 \widetilde I'(\rho^{(3)})=\sum_{n\geq1}ni_n(\rho^{(3)})^{n-1}\geq i_1=1.
\]
It follows from $D^{(3)}=(1-\widetilde I)^{-1}$ that the pole is simple and
has coefficient $1/(\rho^{(3)}I'(\rho^{(3)}))$. Comparing with
\eqref{eq:three-D-quotient} gives the second expression for $C^{(3)}$ and
shows that $t^{(3)}$ is a simple zero. Finally, subtract the principal part
and apply Cauchy's estimate in a slightly larger disc containing no other
pole.
\end{proof}

Numerical evaluation of the convergent recurrences gives
\begin{align}
 t^{(3)}&=0.5108368593557502883\ldots,\notag\\
 \rho^{(3)}&=0.2237933013470754447\ldots,\notag\\
 (\rho^{(3)})^{-1}&=4.468408991603930817\ldots,\notag\\
 C^{(3)}&=0.018967232507198778\ldots.
\label{eq:three-numerics}
\end{align}
In particular,
\begin{equation}\label{eq:three-root-ratio-limits}
 \lim_{n\to\infty}(d_n^{(3)})^{1/n}
 =\lim_{n\to\infty}\frac{d_{n+1}^{(3)}}{d_n^{(3)}}
 =(\rho^{(3)})^{-1}.
\end{equation}

\begin{remark}
In principle, one can improve on Theorem~\ref{thm:three-dominant-pole} by keeping track of the contributions of
other poles in the region $|z|<1/4$. 
Choosing one zero $\widetilde t_n$ between $\widehat t_n$ and $\widehat t_{n+1}$, in the notations of the
proof of Corollary~\ref{cor:three-positive-zeros}, we get a sequence of positive poles $\widetilde z_n\uparrow1/4$ such
that
\[
 \frac14-\widetilde z_n
 \sim\frac{(\log n)^2}{16n^2},
\]
(In fact, asymptotically, one can show that there is only one zero per interval, though we skip the proof.)
However, numerical investigation shows that there are other, complex roots of $R(t)$, even in the region corresponding to $|z|<1/4$.
\end{remark}

Numerically, the first positive zeros of $R$ and their images are
\[
\begin{array}{c|c|c}
j&t_j&z_j=t_j/(1+t_j)^2\\ \hline
1&0.5108368593557503&0.2237933013470754\\
2&0.6413539750612613&0.2380637598080282\\
3&0.7059827514180680&0.2425743178578199\\
4&0.7467481304475681&0.2447448543821405\\
5&0.7755166604126627&0.2460036940071294.
\end{array}
\]
The five values $z_j$ agree, to all digits displayed here, with the first
five positive singularities reported experimentally in Pantone's 2017
slides~\cite{PantoneSlides}.
The results above give a rigorous proof of the
numerical singularity pattern observed there,
in particular, of the accumulation of singularities at $1/4$.

\subsection{Probabilistic consequences}
\label{sec:three-probabilistic}
The sum decomposition leads to a nice probabilistic byproduct. Let $K_n$
be the number of sum-indecomposable components of a uniformly random
member of $\Dthree\cap \Sym_n$, and let $L_n$ be the size of its first
component.

\begin{corollary}\label{cor:three-components}
Define
\begin{align*}
 p_k&=i_k(\rho^{(3)})^k,\\
 \mu_\oplus&=\sum_{k\geq1}kp_k
 =\rho^{(3)}I'(\rho^{(3)})=(C^{(3)})^{-1},
\\
  \sigma_\oplus^2&=\sum_{k\geq1}(k-\mu_\oplus)^2p_k
                   \\
 &=(\rho^{(3)})^2I''(\rho^{(3)})
 +\rho^{(3)}I'(\rho^{(3)})-\mu_\oplus^2.
\end{align*}
Then
\begin{align*}
 \mathbb E K_n&=C^{(3)}n+O(1),\\
 \operatorname{Var}K_n&=
 \frac{\sigma_\oplus^2}{\mu_\oplus^3}n+O(1),
\end{align*}
and the centred, normalised variables converge to a standard Gaussian.
For every fixed $k\geq1$,
\[
 \mathbb P(L_n=k)\longrightarrow p_k.
\]
Numerically,
\[
 \mu_\oplus=52.7225044360300\ldots,
 \qquad
 \frac{\sigma_\oplus^2}{\mu_\oplus^3}
 =0.0394372750310189\ldots.
\]
\end{corollary}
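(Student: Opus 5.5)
The plan is to treat the sum decomposition of Lemma~\ref{lem:three-sum-decomposition} as a renewal (sequence) scheme and apply the standard supercritical-sequence analysis. First, the statement about the first component. Its size $L_n$ equals $k$ exactly when $\pi=\pi_1\oplus\pi'$ with $\pi_1$ sum-indecomposable of size $k$ and $\pi'$ an arbitrary member of $\Dthree$ of size $n-k$. Hence
\[
 \mathbb P(L_n=k)=\frac{i_k\,d^{(3)}_{n-k}}{d^{(3)}_n}.
\]
Theorem~\ref{thm:three-dominant-pole} gives $d^{(3)}_{n-k}/d^{(3)}_n\to(\rho^{(3)})^k$, which yields $p_k$. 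That the $p_k$ form a probability distribution is exactly \eqref{eq:three-I-rho}. The identity $\mu_\oplus=\rho^{(3)}I'(\rho^{(3)})=(C^{(3)})^{-1}$ is the formula for $C^{(3)}$ in \eqref{eq:three-dn-asymptotic}, and the formula for $\sigma_\oplus^2$ is the usual factorial-moment computation. Finiteness of both needs $I$ to be analytic in a disc of radius larger than $\rho^{(3)}$.

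That analyticity is the key technical input, and I would get it as follows. On the disc $|z|<r^{(3)}$ of Theorem~\ref{thm:three-dominant-pole}, $D^{(3)}$ is meromorphic with only the simple pole at $\rho^{(3)}$. The function $1-1/D^{(3)}$ agrees with $I$ near $0$. It can only fail to be analytic where $D^{(3)}$ vanishes, so I need to know that $D^{(3)}$ has no zero in a disc slightly larger than $|z|\le\rho^{(3)}$. For $|z|\le\rho^{(3)}$ we have $|I(z)|\le I(|z|)\le 1$, with equality only at $z=\rho^{(3)}$. Therefore $D^{(3)}=1/(1-I)$ cannot vanish there, since $I$ is finite in that closed disc. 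By compactness and continuity, $D^{(3)}$ has no zeros on a slightly larger disc either. So $I$ extends analytically to some $|z|<r'$ with $r'>\rho^{(3)}$, which gives finite moments of all orders.

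For $K_n$, I would introduce the bivariate function
\[
 D(z,u)=\frac1{1-uI(z)}=\sum_{n,k}\mathbb P(K_n=k)\,d^{(3)}_n z^nu^k,
\]
and carry out a perturbation of the simple pole. For $u$ near $1$, the equation $uI(z)=1$ has a unique simple root $\rho(u)$ near $\rho^{(3)}$ by the implicit function theorem, using $I'(\rho^{(3)})>0$. Uniqueness of the dominant singularity persists for $u$ near $1$ by continuity on a circle $|z|=r$ with $\rho^{(3)}<r<r'$. This gives, uniformly for $u$ near $1$,
\[
 [z^n]D(z,u)=c(u)\rho(u)^{-n}\bigl(1+O(\kappa^n)\bigr)\quad\text{for some }\kappa<1.
\]
This is the quasi-power form, so Hwang's quasi-powers theorem yields the Gaussian limit, with mean and variance of the form $\mathfrak m n+O(1)$ and $\mathfrak v n+O(1)$. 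The constants are the standard ones: differentiating $uI(\rho(u))=1$ gives $\mathfrak m=-\rho'(1)/\rho^{(3)}=1/(\rho^{(3)}I'(\rho^{(3)}))=C^{(3)}$, and the second-order expansion gives $\mathfrak v=\sigma_\oplus^2/\mu_\oplus^3$, matching the renewal-theory formula.

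I expect the main obstacle to be justifying analyticity of $I$ beyond $\rho^{(3)}$, together with the uniform isolation of the root $\rho(u)$; the compactness argument above is what I would try first. For the variance, Hwang's theorem requires $\mathfrak v\neq0$, and this holds because $p_k>0$ for at least two values of $k$ (for instance $i_1=i_2=1$). The numerical values would then follow by evaluating $I'$ and $I''$ through $I=1-1/D^{(3)}$ and the quotient formula \eqref{eq:three-D-quotient}.
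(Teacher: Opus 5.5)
Your proposal is correct and follows essentially the same route as the paper. The shared steps are:
\begin{itemize}
\item the formula $\mathbb P(L_n=k)=i_k d^{(3)}_{n-k}/d^{(3)}_n$ combined with Theorem~\ref{thm:three-dominant-pole};
\item the check that $D^{(3)}$ has no zero on $|z|\le\rho^{(3)}$, so that $I=1-1/D^{(3)}$ extends analytically past $\rho^{(3)}$;
\item the bivariate series $1/(1-uI(z))$ with the quasi-powers theorem, where nondegeneracy comes from $i_1=i_2=1$.
\end{itemize}
The only difference is that you unpack the meromorphic sequence schema yourself, via implicit-function perturbation of the simple pole and Rouch\'e on a circle, whereas the paper simply cites it from Flajolet--Sedgewick.
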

\begin{proof}
Equation \eqref{eq:three-I-rho} makes $(p_k)$ a probability distribution.
It is nondegenerate: both $1$ and $21$ are sum indecomposable, so
$i_1=i_2=1$ and hence $p_1,p_2>0$. In particular,
$\sigma_\oplus^2>0$.
Moreover, $I=1-1/D^{(3)}$ is analytic in a disc slightly larger than
$\rho^{(3)}$: $D^{(3)}$ has no zero on $|z|\leq\rho^{(3)}$, by
\eqref{eq:three-D-sequence} in the open disc and by absolute convergence
on its boundary. Marking the number of components gives
\[
 D^{(3)}(z,u)=\frac1{1-uI(z)}.
\]
The meromorphic sequence schema and quasi-powers theorem
\cite[Chapter~IX]{FS} yield the moment estimates and central limit theorem.
Finally,
\[
 \mathbb P(L_n=k)
 =\frac{i_kd_{n-k}^{(3)}}{d_n^{(3)}}\longrightarrow i_k(\rho^{(3)})^k
\]
by Theorem~\ref{thm:three-dominant-pole}.
\end{proof}




\section{The class $ \Av(4123,4312) $}\label{sec:two-pattern}
\subsection{Solution of the $q$-difference equation}\label{sec:two-product}
We return to the common kernel equation \eqref{eq:kernel-two}. This time
define
\begin{equation}\label{eq:two-Psi-def}
 G^{(2)}(z;x(w))=(1-tw)\Psi_t(w)
\end{equation}
and introduce
\begin{align*}
 L_t(w)&=(1-tw)(1-t^2w),
\\
 H_t(w)&=1+\frac{(1+t)(1-3t+t^2)}t\,w+tw^2,
\\
 K_t(w)&=\frac{(1-t^2)^2w}{t(1-t^2w)}.
\end{align*}

\begin{proposition}\label{prop:two-q-equation}
Equation \eqref{eq:kernel-two} is equivalent to the $q$-difference equation
\begin{equation}\label{eq:two-q-equation}
 L_t(w)\Psi_t(t^2w)=H_t(w)\Psi_t(w)-K_t(w).
\end{equation}
\end{proposition}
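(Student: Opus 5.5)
The plan is a direct substitution, in the same way as Proposition~\ref{prop:three-q-equation}. I substitute $z=t/(1+t)^2$ and $x=x(w)=(1+t)(1-w)/(1-tw)$ into \eqref{eq:kernel-two}. I rewrite $G^{(2)}(z;x)$ as $(1-tw)\Psi_t(w)$ using \eqref{eq:two-Psi-def}. By \eqref{eq:common-conjugacy}, with $m^2=q=t^2$, the kernel root satisfies $Y(x(w))=x(t^2w)$. Hence $G^{(2)}(z;Y(x))=(1-t^3w)\Psi_t(t^2w)$. Everything is then a rational identity in $t,w$ times the unknowns $\Psi_t(w)$ and $\Psi_t(t^2w)$, and it remains to clear denominators. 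Because every step is an invertible substitution (and $w\mapsto x(w)$ is a M\"obius change of variables, valid $z$-adically as in the proof of Proposition~\ref{prop:kernel-equations}), this gives equivalence rather than just implication.

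The key step is to factor the four polynomial coefficients in \eqref{eq:kernel-two} in the new variables. I would record the following evaluations:
\begin{align*}
 zx-1&=-\frac{1-t^2w}{(1+t)(1-tw)},\qquad
 z(x+1)-1=-\frac{1-t^3w}{(1+t)^2(1-tw)},\\
 zx^2-x+1&=\frac{(1-t)^2w}{(1-tw)^2},\qquad
 zx^2+(z-1)x+1=\frac{(1-t)^2w}{(1-tw)^2}+\frac{t(1-w)}{(1+t)(1-tw)}.
\end{align*}
Each follows from $zx=t(1-w)/((1+t)(1-tw))$ together with $zx^2-x+1=x(zx-1)+1$.

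Next I plug these into the three terms of \eqref{eq:kernel-two}. The constant term becomes a multiple of $(1-t^3w)(1-t)^2w$. The $G^{(2)}(z;x)$ term becomes a multiple of $(1-t^2w)(1-t^3w)$ times the last quadratic times $(1-tw)\Psi_t(w)$. The $G^{(2)}(z;Y)$ term becomes $z(1-t^2w)^2/((1+t)^2(1-tw)^2)$ times $(1-t^3w)\Psi_t(t^2w)$.

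The factor $(1-t^3w)$ then appears in all three terms and cancels. I would multiply through by $(1+t)^3(1-tw)^3/(1-t^2w)$, up to sign, and normalise so that the coefficient of $\Psi_t(t^2w)$ is $L_t(w)=(1-tw)(1-t^2w)$. The remaining checks are that:
\begin{itemize}
 \item the coefficient of $\Psi_t(w)$, which is essentially $(1+t)^2(1-tw)^2$ times the last quadratic divided by a constant multiple of $t$, simplifies to $H_t(w)=1+\frac{(1+t)(1-3t+t^2)}{t}w+tw^2$;
 \item the inhomogeneous term reduces to $K_t(w)=(1-t^2)^2w/(t(1-t^2w))$, where the leftover $(1-t^2w)$ in its denominator comes from the division.
\end{itemize}

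The main obstacle I expect is bookkeeping: tracking the powers of $(1+t)$, $(1-tw)$ and $t$ so that the normalisation comes out exactly as $L_t$, $H_t$, $K_t$, with the correct sign of $K_t$. For the $H_t$ check, I would expand $(1+t)(1-t)^2w+t(1-w)(1-tw)$ and compare it with $tH_t(w)$. The $w$-coefficients should match because $(1+t)(1-t)^2-t(1+t)=(1+t)(1-3t+t^2)$. The constant and $w^2$ terms match immediately, which confirms the normalisation.
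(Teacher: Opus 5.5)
Your proposal is correct and follows the same route as the paper: substitute $z=t/(1+t)^2$, $x=x(w)$ and \eqref{eq:two-Psi-def} into \eqref{eq:kernel-two}, use $Y(x(w))=x(t^2w)$, and divide out the common factor $t(1-t^2w)(1-t^3w)/\bigl((1+t)^4(1-tw)^3\bigr)$. Your four factorisations check out, in particular $zx^2+(z-1)x+1=tH_t(w)/\bigl((1+t)(1-tw)^2\bigr)$, and they yield exactly $L_t$, $H_t$ and $K_t$ with the stated signs.
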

\begin{proof}
Substitute \eqref{eq:z-t-common}, \eqref{eq:w-change-common}, and
\eqref{eq:two-Psi-def} into \eqref{eq:kernel-two}, then use
\eqref{eq:common-conjugacy} and cancel common rational factors.
\end{proof}

Away from the zeros of $H_t$, write \eqref{eq:two-q-equation} as
\begin{equation}\label{eq:two-inward}
 \Psi_t(w)=\alpha_t(w)\Psi_t(t^2w)+\beta_t(w),
 \qquad
 \alpha_t(w)=\frac{L_t(w)}{H_t(w)},
 \quad
 \beta_t(w)=\frac{K_t(w)}{H_t(w)}.
\end{equation}
Because $|t|<1$, this recurrence iterates toward the origin. Define
\begin{align}
 P_t(w)&=\prod_{i\geq0}\alpha_t(t^{2i}w),
 \label{eq:two-P-def}\\
 Q_t(w)&=\sum_{j\geq0}\beta_t(t^{2j}w)
 \prod_{i=0}^{j-1}\alpha_t(t^{2i}w).
 \label{eq:two-Q-def}
\end{align}
Let
\begin{equation}\label{eq:two-Sigma-def}
 \Sigma_t=\{w:H_t(w)=0\}\cup\{t^{-2}\},
 \qquad
 \Omega_t=\{w:t^{2i}w\notin\Sigma_t\text{ for every }i\geq0\}.
\end{equation}
Thus $\Sigma_t$ contains every pole of $\alpha_t$ and $\beta_t$.
The points excluded from $\Omega_t$ lie on the backward dilation orbits
of $\Sigma_t$: a starting point $w=t^{-2i}a$ is excluded because its
inward trajectory $w,t^2w,t^4w,\ldots$ reaches $a\in\Sigma_t$ after
$i$ steps.

\begin{lemma}\label{lem:two-PQ}
Fix $t$ with $0<|t|<1$. On every compact subset of $\Omega_t$, the
product and sum in \eqref{eq:two-P-def}--\eqref{eq:two-Q-def} converge
normally. Every solution of \eqref{eq:two-inward} on $\Omega_t$ that is
analytic at $w=0$ has the form
\begin{equation}\label{eq:two-Psi-PQ}
 \Psi_t(w)=c(t)P_t(w)+Q_t(w),
 \qquad c(t)=\Psi_t(0).
\end{equation}
\end{lemma}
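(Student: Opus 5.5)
The plan has two parts: convergence of $P_t$ and $Q_t$, then uniqueness via the homogeneous equation.

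\emph{Convergence.} Both parts rest on the behaviour of $\alpha_t,\beta_t$ near the origin. We have $L_t(0)=H_t(0)=1$ and $K_t(0)=0$. Hence
\[
\alpha_t(w)=1+O(w),
\qquad
\beta_t(w)=O(w),
\]
analytically on a small disc $|w|\leq\eta$ free of $\Sigma_t$. Indeed, $\Sigma_t$ avoids $0$ because $H_t(0)=1$ and $t^{-2}\neq0$.

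Fix a compact set $E\subset\Omega_t$ and choose $N$ with $|t|^{2N}\sup_E|w|\leq\eta$. For $i\geq N$ we get
\[
|\alpha_t(t^{2i}w)-1|\leq C|t|^{2i},
\qquad
|\beta_t(t^{2i}w)|\leq C|t|^{2i}
\]
uniformly on $E$. So the tail product $\prod_{i\geq N}\alpha_t(t^{2i}w)$ converges normally.

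The finitely many factors with $i<N$ are analytic on $E$. Their only possible poles are points where $t^{2i}w\in\Sigma_t$, which the definition of $\Omega_t$ excludes (the zero of $1-t^2\cdot$ in the denominator of $K_t$ sits at $t^{-2}\in\Sigma_t$). Hence $P_t$ converges normally on $E$.

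For $Q_t$, the partial products are bounded uniformly on $E$ by the same argument, and the terms of the sum are $O(|t|^{2j})$. So the sum also converges normally. Both $P_t$ and $Q_t$ are therefore analytic on $\Omega_t$.

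Direct substitution then shows
\[
P_t(w)=\alpha_t(w)P_t(t^2w),
\qquad
Q_t(w)=\alpha_t(w)Q_t(t^2w)+\beta_t(w).
\]
Since $P_t(0)=1$ and $Q_t(0)=0$ (all $\beta_t$ terms vanish at $0$), the function $cP_t+Q_t$ solves \eqref{eq:two-inward} and takes the value $c$ at $0$.

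\emph{Uniqueness.} Let $\Psi_t$ be any solution analytic at $0$, and set $\Delta=\Psi_t-\Psi_t(0)P_t-Q_t$. Then $\Delta$ satisfies the homogeneous equation
\[
\Delta(w)=\alpha_t(w)\Delta(t^2w),
\]
and $\Delta(0)=0$.

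Iterating $n$ times gives
\[
\Delta(w)=\Bigl(\prod_{i<n}\alpha_t(t^{2i}w)\Bigr)\Delta(t^{2n}w).
\]
The product stays bounded as $n\to\infty$ by the convergence step. Meanwhile $\Delta(t^{2n}w)\to\Delta(0)=0$, because $\Delta$ is continuous at $0$ and $t^{2n}w\to0$. Hence $\Delta\equiv0$ on $\Omega_t$, and \eqref{eq:two-Psi-PQ} holds.

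The main obstacle is bookkeeping rather than analysis. We must check that every pole of $\alpha_t$ and $\beta_t$ is accounted for by $\Sigma_t$, and note that $H_t$'s zeros, though poles of $\alpha_t$, lie off $0$. We must also make sure that the starting point $w$ itself lies in $\Omega_t$, which is the $i=0$ case of the definition.
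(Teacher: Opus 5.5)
Your proposal is correct and follows the paper's proof. The same tail estimates $\alpha_t(t^{2i}w)=1+O(|t|^{2i})$ and $\beta_t(t^{2i}w)=O(|t|^{2i})$, with finitely many initial factors handled by the avoidance condition, give normal convergence, and the representation comes from iterating toward the origin and using continuity at $0$. The only difference is that the paper iterates the inhomogeneous equation for $\Psi_t$ directly, whereas you iterate the homogeneous equation for $\Psi_t-\Psi_t(0)P_t-Q_t$; your route requires the extra check that $cP_t+Q_t$ is a solution, which the paper avoids.
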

\begin{proof}
At the origin,
\[
 \alpha_t(w)=1+O(w),
 \qquad
 \beta_t(w)=O(w).
\]
If $E\subset\Omega_t$ is compact, the avoidance condition in
\eqref{eq:two-Sigma-def} makes all the finitely many initial factors
holomorphic on a neighbourhood of $E$. For all sufficiently large $i$,
$t^{2i}E$ lies in a fixed neighbourhood of the origin, and hence, uniformly
for $w\in E$,
\[
 \alpha_t(t^{2i}w)=1+O(|t|^{2i}),
 \qquad
 \beta_t(t^{2i}w)=O(|t|^{2i}).
\]
The summability of these bounds gives normal convergence of the product
and sum.

Iterating \eqref{eq:two-inward} $N$ times gives
\begin{align*}
 \Psi_t(w)={}&
 \left(\prod_{i=0}^{N}\alpha_t(t^{2i}w)\right)
 \Psi_t(t^{2N+2}w)\\
 &{}+\sum_{j=0}^{N}\beta_t(t^{2j}w)
 \prod_{i=0}^{j-1}\alpha_t(t^{2i}w).
\end{align*}
Since $t^{2N+2}w\to0$, analyticity at the origin gives
$\Psi_t(t^{2N+2}w)\to c(t)$. Letting $N\to\infty$ proves
\eqref{eq:two-Psi-PQ}.
\end{proof}

Multiplication of $H_t(w)$ by $t$ removes its apparent pole at $t=0$. The
polynomial
\[
 \widetilde H_t(w)=t+(1+t)(1-3t+t^2)w+t^2w^2
\]
has a unique zero $r(t)$ analytic near the origin with $r(0)=0$:
\begin{equation}\label{eq:two-r-def}
 r(t)=\frac{-(1+t)(1-3t+t^2)+(1-t)\sqrt{\Delta(t)}}{2t^2},
\end{equation}
where
\[
 \Delta(t)=t^4-2t^3-5t^2-2t+1,
 \qquad \sqrt{\Delta(t)}=1+O(t).
\]
Thus $r(t)=-t-2t^2-6t^3-16t^4-46t^5-\cdots$.

The point $r(t)$ does not belong to $\Omega_t$, because it is a zero of
$H_t$. We therefore use the undivided equation
\eqref{eq:two-q-equation}, rather than \eqref{eq:two-inward}, at this
point. Analyticity of the combinatorial solution then forces
\begin{equation}\label{eq:two-root-cancellation}
 \Psi_t(t^2r(t))=\eta_t(r(t)),
 \qquad
 \eta_t(r)=-\frac{(1-t^2)^2r}
 {t(1-tr)(1-t^2r)^2}.
\end{equation}
The other zero of $H_t$ is
$s(t)=1/(tr(t))=-t^{-2}+O(t^{-1})$, while
$t^{2i+2}r(t)=O(t^{2i+3})$. Thus, for all sufficiently small nonzero
$t$, these inward iterates avoid $r(t)$, $s(t)$, and $t^{-2}$;
they also avoid the zeros $t^{-1}$ and $t^{-2}$ of $L_t$. It follows
that $t^2r(t)\in\Omega_t$ and $P_t(t^2r(t))\neq0$.
Lemma~\ref{lem:two-PQ} applied at $t^2r(t)$ therefore determines the
free constant:
\begin{equation}\label{eq:two-c-formula}
 c(t)=\frac{\eta_t(r(t))-Q_t(t^2r(t))}{P_t(t^2r(t))}.
\end{equation}

\begin{theorem}\label{thm:two-exact}
For $z=t/(1+t)^2$ with sufficiently small $|t|$, define $r,P_t,Q_t,c$
by \eqref{eq:two-r-def}, \eqref{eq:two-P-def}, \eqref{eq:two-Q-def}, and
\eqref{eq:two-c-formula}. Then
\begin{align}
 g^{(2)}(z)&=(1-t^2)\bigl(c(t)P_t(t)+Q_t(t)\bigr),
 \label{eq:two-g-exact}\\
 D^{(2)}(z)&=1+t\frac{1-t}{1+t}\bigl(c(t)P_t(t)+Q_t(t)\bigr),
 \label{eq:two-D-exact}
\end{align}
\end{theorem}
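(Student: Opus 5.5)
The plan has three parts. First, show that $\Psi_t$ is a genuine analytic solution of \eqref{eq:two-inward} near the origin, so that Lemma~\ref{lem:two-PQ} applies and \eqref{eq:two-c-formula} pins down the constant. Second, evaluate the representation at the point $w$ corresponding to $x=1$. Third, translate into $g^{(2)}$ and $D^{(2)}$.

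\emph{Step 1: analyticity of $\Psi_t$.} I would start from the convergence estimate of \S\ref{sec:common-kernel}. For $|z|$ small, $G^{(2)}(z;x)$ is analytic in $x$ on a disc $|x|<1/(M^{(2)}|z|)$, and this disc becomes large as $z\to0$. The map $w\mapsto x(w)=(1+t)(1-w)/(1-tw)$ sends any fixed disc $|w|\le\rho$ with $\rho<|t|^{-1}$ into a bounded region once $t$ is small. So for sufficiently small $|t|$, $\Psi_t(w)=G^{(2)}(z;x(w))/(1-tw)$ is analytic on a disc $|w|<\rho$ with $\rho>1$. I would choose $\rho$ so that this disc contains $0$, $t$, $t^2r(t)$ and $r(t)$, but not $s(t)$, $t^{-1}$ or $t^{-2}$.

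\emph{Step 1, continued: the functional equation.} Proposition~\ref{prop:two-q-equation} says that $\Psi_t$ satisfies \eqref{eq:two-q-equation} as an identity of analytic functions on this disc. One subtlety: Proposition~\ref{prop:kernel-equations} was derived $z$-adically. I would argue that both sides are convergent analytic functions in $(z,x)$ near $(0,1)$ that agree as formal series, so they agree as functions, and analytic continuation in $w$ extends the identity to the disc. Dividing by $H_t$ away from its zeros gives \eqref{eq:two-inward}.

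\emph{Step 1, continued: the constant.} Lemma~\ref{lem:two-PQ} needs a solution on all of $\Omega_t$ that is analytic at $0$. I would avoid this by using only the finite iteration in its proof, which needs analyticity just along the inward orbit. For $w$ in the disc and outside the backward orbits of $r(t)$, the points $t^{2i}w$ stay in the disc and avoid $r(t)$, since $r(t)$ is the only zero of $H_t$ inside. Letting $N\to\infty$ gives $\Psi_t(w)=c(t)P_t(w)+Q_t(w)$ there, with $c(t)=\Psi_t(0)$. Next, put $w=r(t)$ in the undivided equation \eqref{eq:two-q-equation}. Since $H_t(r)=0$, this gives $L_t(r)\Psi_t(t^2r)=-K_t(r)$, which is \eqref{eq:two-root-cancellation}; both points lie in the disc. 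Applying the representation at $t^2r(t)\in\Omega_t$, where $P_t(t^2r)\ne0$ as the text notes, then yields \eqref{eq:two-c-formula}. This identifies the analytic constant $\Psi_t(0)$ with the explicit expression.

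\emph{Step 2: evaluation at $x=1$.} Solving $x(w)=1$, i.e. $(1+t)(1-w)=1-tw$, gives $w=t$. I need $t\in\Omega_t$. The inward orbit $t^{2i+1}$ is $O(t)$, while $r(t)=-t+O(t^2)$ has the opposite sign at leading order, so for small $t\ne0$ no iterate equals $r(t)$. The orbit also clearly avoids $s(t)$ and $t^{-2}$. Hence
\[
 g^{(2)}(z)=G^{(2)}(z;1)=(1-t^2)\Psi_t(t)=(1-t^2)\bigl(c(t)P_t(t)+Q_t(t)\bigr).
\]

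\emph{Step 3: the formula for $D^{(2)}$.} By \eqref{eq:Dj-from-gj}, $D^{(2)}=1+zg^{(2)}$. With $z=t/(1+t)^2$,
\[
 z(1-t^2)=\frac{t(1-t)}{1+t},
\]
which gives \eqref{eq:two-D-exact}.

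\emph{Expected obstacle.} The main difficulty is the uniformity in Step 1. I must ensure the analyticity domain of $\Psi_t$ contains $r(t)$, $t^2r(t)$ and $t$ and excludes $s(t)$, and justify passing from the formal kernel identity to an analytic one. I would handle both by working in a bidisc in $(t,w)$ with $|t|<\tau$ and $|w|<2$, shrinking $\tau$ as needed.
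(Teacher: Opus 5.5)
Your proposal is correct and follows the paper's proof essentially step for step. The steps are: analyticity of $\Psi_t$ on a disc around the origin containing $r(t)$, $t^2r(t)$ and $t$; substitution $w=r(t)$ into the undivided equation to get \eqref{eq:two-root-cancellation}; the product--sum representation at $t^2r(t)$ to fix $c(t)$, and at $w=t$ (where $x=1$) to obtain $g^{(2)}$; then $D^{(2)}=1+zg^{(2)}$. Your extra care tightens two points the paper passes over quickly: you run the iteration from the proof of Lemma~\ref{lem:two-PQ} only along inward orbits inside the disc where $\Psi_t$ is actually known to be analytic, rather than relying on the lemma's hypothesis of a solution on all of $\Omega_t$, and you check that the $z$-adic kernel identity holds as an analytic identity.
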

\begin{proof}
For small $t$, the point $x(r(t))$ lies in a fixed neighbourhood of $1$,
where the combinatorial series $G^{(2)}(z;x)$ converges. Hence $\Psi_t$ is
analytic at $r(t)$, and substitution into \eqref{eq:two-q-equation} gives
\eqref{eq:two-root-cancellation}. The orbit check preceding the theorem
allows Lemma~\ref{lem:two-PQ} to be applied at $t^2r(t)$ and gives
\eqref{eq:two-c-formula}. The forward iterates of $t$ are
$t^{2i+1}$; for small $t$ they also avoid $\Sigma_t$, so the lemma
at $w=t$ gives $\Psi_t(t)=c(t)P_t(t)+Q_t(t)$. Finally $x(t)=1$, so
$g^{(2)}(z)=G^{(2)}(z;1)=(1-t^2)\Psi_t(t)$. This proves
\eqref{eq:two-g-exact}, and \eqref{eq:Dj-from-gj} proves
\eqref{eq:two-D-exact}.
\end{proof}

Expansion gives
\[
 D^{(2)}(z)=1+z+2z^2+6z^3+22z^4+89z^5+382z^6+1711z^7+\cdots,
\]
in agreement with the succession rule.

\subsection{The root collision}\label{sec:two-collision}
The discriminant of $H_t$ as a quadratic polynomial in $w$ is
\begin{equation}\label{eq:two-H-discriminant}
 \operatorname{disc}_wH_t=\frac{(1-t)^2}{t^2}\Delta(t).
\end{equation}
Moreover,
\begin{equation}\label{eq:two-Delta-factor}
 \Delta(t)=t^2\left((2+t+t^{-1})^2
 -6(2+t+t^{-1})+1\right).
\end{equation}
The first collision on the positive $t$-axis is therefore at
\begin{equation}\label{eq:two-tc}
 t^{(2)}=\frac12+\sqrt2-\frac12\sqrt{5+4\sqrt2}
 =0.281971680061\ldots,
\end{equation}
and
\begin{equation}\label{eq:two-zc-growth}
 z^{(2)}:=z(t^{(2)})=3-2\sqrt2,
 \qquad
 (z^{(2)})^{-1}=3+2\sqrt2.
\end{equation}

\begin{proposition}\label{prop:two-positive-continuation}
The expression in Theorem~\ref{thm:two-exact} has an analytic continuation 
along $0<t<t^{(2)}$. Consequently, the radius of convergence of $D^{(2)}$ is at
least $z^{(2)}$.
\end{proposition}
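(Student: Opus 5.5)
We must show that the formula of Theorem~\ref{thm:two-exact} makes sense and is analytic for real $t\in(0,t^{(2)})$. Then we use Pringsheim to conclude about the radius of convergence.

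\textbf{Step 1: the roots of $H_t$ on $(0,t^{(2)})$.} On $0<t<t^{(2)}$ we have $\Delta(t)>0$. Indeed, by \eqref{eq:two-Delta-factor}, with $u=2+t+t^{-1}$ we have $\Delta=t^2(u^2-6u+1)$. This is positive when $u>3+2\sqrt2$, and $t<t^{(2)}$ is exactly that range. So $\sqrt{\Delta(t)}$ continues analytically and stays positive, and $r(t)$ given by \eqref{eq:two-r-def} is real-analytic on the interval. Consequently $r(t)$ and $s(t)=1/(tr(t))$ remain real and distinct.

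I will check that $r(t)<0$ on the whole interval. The product of the roots is $t^{-1}>0$, and their sum is $-(1+t)(1-3t+t^2)/t^2$. This sum is negative when $1-3t+t^2>0$, i.e.\ $t<(3-\sqrt5)/2\approx0.38$, which covers $t<t^{(2)}$. So both roots are negative, and neither can cross $0$ since their product is $t^{-1}$.

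\textbf{Step 2: orbit avoidance.} We need the forward orbits $t^{2i+1}$, $i\ge0$, and $t^{2i+2}r(t)$, $i\ge0$, to avoid $\Sigma_t=\{r,s,t^{-2}\}$. We also need them to avoid the zeros $t^{-1},t^{-2}$ of $L_t$, so that $P_t(t^2r)\neq0$.

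\begin{itemize}
\item The points $t^{2i+1}$ are positive and lie in $(0,1)$, while $r,s<0$ and $t^{-1},t^{-2}>1$. So this orbit is fine.
\item The points $t^{2i+2}r$ are negative, so they automatically avoid $t^{-1}$ and $t^{-2}$. It remains to exclude $t^{2i+2}r=r$, which is impossible since $t^{2i+2}\ne1$.
\item It also remains to exclude $t^{2i+2}r=s$, i.e.\ $t^{2i+3}r^2=1$. We have $|s|\ge|r|$, because $|r|$ is the smaller root (the analytic branch with $r(0)=0$, and the roots stay distinct, so the ordering is preserved). Hence $|t^{2i+2}r|<|r|\le|s|$, and equality never occurs.
\end{itemize}

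Thus $t^2r\in\Omega_t$ and $t\in\Omega_t$. Lemma~\ref{lem:two-PQ} gives normal convergence locally uniformly in $t$ as well; the estimates there are uniform on compact $t$-sets, since the singular sets move continuously. Hence $P_t(t)$, $Q_t(t)$, $P_t(t^2r)$ and $Q_t(t^2r)$ are real-analytic in $t$, and $P_t(t^2r)$ is a convergent product of nonzero factors, hence nonzero. The function $\eta_t(r)$ is analytic because $1-tr$ and $1-t^2r$ are greater than $1$. Therefore $c(t)$ given by \eqref{eq:two-c-formula}, and with it the right side of \eqref{eq:two-D-exact}, continue analytically along $(0,t^{(2)})$, and in fact to a complex neighbourhood of that interval.

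\textbf{Step 3: radius of convergence.} Since $t(z)$ maps $(0,z^{(2)})$ increasingly onto $(0,t^{(2)})$ and is analytic there, $D^{(2)}$ continues analytically along a neighbourhood of $[0,z^{(2)})$. By Vivanti--Pringsheim, if the radius $\rho$ were less than $z^{(2)}$, then $z=\rho$ would be a singularity. But the continuation near $\rho$ coincides with $D^{(2)}$ on $(0,\rho)$ by uniqueness, giving a contradiction.

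\textbf{Main obstacle.} The delicate point is the ordering $|r|<|s|$ in Step~2, together with uniformity of the convergence in $t$. I would handle the ordering by noting that $r(t)^2t<1$ holds near $0$, and can only fail where $r=s$, i.e.\ where $\Delta=0$, which does not happen on the interval.
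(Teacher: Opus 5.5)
Your proposal is correct and follows essentially the same route as the paper's proof. Both roots of $H_t$ are negative with $r(t)$ the one nearer the origin, so the orbits $t^{2i+2}r(t)$ and $t^{2i+1}$ avoid $\Sigma_t$ and the zeros of $L_t$, $P_t(t^2r(t))\neq0$, and Vivanti--Pringsheim concludes. You also make explicit some details the paper leaves implicit: Vieta's formulas for the signs of the roots, continuity plus distinctness for the ordering $|r|<|s|$, and regularity of $\eta_t(r)$.
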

\begin{proof}
For $0<t<t^{(2)}$, the coefficient of $w$ in $H_t$ is positive and its two
zeros are negative. The root $r(t)$ is the one nearer the origin. Thus
the arguments $t^{2i+2}r(t)$ in $P_t(t^2r(t))$ and $Q_t(t^2r(t))$ lie
strictly between $r(t)$ and $0$ and avoid both zeros; all $t^{2i+1}$ are
positive and avoid them as well. None of these arguments can equal
$t^{-2}>1$, the remaining point of $\Sigma_t$. The products and sums are therefore
analytic near each point of the interval. Every factor of
$P_t(t^2r(t))$ is positive, and the product is nonzero because its factors
differ from $1$ by a summable sequence. Hence \eqref{eq:two-c-formula}
has no singularity on the interval.

The map $t\mapsto z(t)$ is increasing on $(0,1)$. The second part of the Proposition
follows from the Vivanti--Pringsheim theorem.
\end{proof}

To test whether the collision is visible in $D^{(2)}$, temporarily regard $t$
and $r$ as independent and define
\begin{align}
 c(t,r)&=\frac{\eta_t(r)-Q_t(t^2r)}{P_t(t^2r)},
 \label{eq:two-ctr}\\
 \widehat g^{(2)}(t,r)&=(1-t^2)
 \bigl(c(t,r)P_t(t)+Q_t(t)\bigr).
 \label{eq:two-ghat}
\end{align}
At the collision,
\[
 r^{(2)}=-\frac{(1+t^{(2)})(1-3t^{(2)}+(t^{(2)})^2)}{2(t^{(2)})^2}
 =-1.8832035059135\ldots.
\]
All products and sums in \eqref{eq:two-ghat} are regular at
$(t^{(2)},r^{(2)})$, because $(t^{(2)})^2r^{(2)}$ lies strictly between
$r^{(2)}$ and $0$.

\begin{proposition}\label{prop:two-transversality}
At the collision point,
\begin{equation}\label{eq:two-nondegeneracy}
 \partial_r\widehat g^{(2)}(t^{(2)},r^{(2)})<0.
\end{equation}
Numerically,
\begin{equation}\label{eq:two-ghat-r-numeric}
 \partial_r\widehat g^{(2)}(t^{(2)},r^{(2)})
 =-0.2401712129238621458043835\ldots.
\end{equation}
\end{proposition}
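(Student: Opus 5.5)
The sign claim is a statement about one explicit number. I would prove it by a certified numerical evaluation with rigorous error bounds, not by a structural argument. Every ingredient of $\widehat g^{(2)}(t,r)$ in \eqref{eq:two-ghat} is explicit: the rational functions $\alpha_t,\beta_t,\eta_t$, the convergent product $P_t$ and the convergent sum $Q_t$, evaluated at the two points $w=t$ and $w=t^2r$. Differentiating in $r$ with $t=t^{(2)}$ fixed, only the terms involving $c(t,r)$ depend on $r$, so
\[
 \partial_r\widehat g^{(2)}=(1-t^2)P_t(t)\,\partial_r c(t,r),
 \qquad
 \partial_r c=\frac{\eta_t'(r)-t^2Q_t'(t^2r)}{P_t(t^2r)}
 -\frac{\bigl(\eta_t(r)-Q_t(t^2r)\bigr)t^2P_t'(t^2r)}{P_t(t^2r)^2},
\]
where the primes on $P_t,Q_t$ denote $w$-derivatives. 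I would first record that $P_t(t)>0$ and $P_t(t^2r^{(2)})>0$. This follows as in Proposition~\ref{prop:two-positive-continuation}: at $t=t^{(2)}$ the double root of $H_t$ is $r^{(2)}<0$, so $H_t(w)=t^{-1}\widetilde H_t(w)$ is a positive multiple of $(w-r^{(2)})^2$ and is positive at every evaluation point. The factors $L_t(t^{2i}w)$ are positive for $w=t$ and for $w=t^2r^{(2)}<0$. Hence every $\alpha_t$ factor is positive.

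The derivatives $P_t'/P_t=\sum_i t^{2i}\,(\log\alpha_t)'(t^{2i}w)$ and $Q_t'$ are given by termwise differentiated series. These converge geometrically, with ratio $(t^{(2)})^2\approx0.0795$, by the estimate $\alpha_t(t^{2i}w)=1+O(|t|^{2i})$, $\beta_t(t^{2i}w)=O(|t|^{2i})$ from Lemma~\ref{lem:two-PQ}. Its derivative analogue follows from Cauchy estimates on a small disc around the evaluation points that stays away from $\Sigma_t$.

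The certification then has three steps:
\begin{enumerate}
\item Truncate each product and sum at $N$ terms, with $N\approx 30$ giving far more than needed.
\item Bound the tails explicitly. For $|u|\le |t^{2N+2}r^{(2)}|$, I would bound $|\log\alpha_t(u)|$, $|(\log\alpha_t)'(u)|$, $|\beta_t(u)|$ and $|\beta_t'(u)|$ by $C|u|$ with an explicit $C$, read off from the coefficients of $L_t$, $H_t$ and $K_t$. The tails are then bounded by geometric series.
\item Evaluate the truncated expressions in interval arithmetic at an enclosure of the algebraic number $t^{(2)}$ and of $r^{(2)}$, both given in closed form.
\end{enumerate}
The resulting enclosure of $\partial_r\widehat g^{(2)}(t^{(2)},r^{(2)})$ around $-0.24017\ldots$ excludes $0$, which proves \eqref{eq:two-nondegeneracy}. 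Running the same computation at high precision gives the digits in \eqref{eq:two-ghat-r-numeric}.

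The main obstacle is making the tail constants fully rigorous rather than heuristic. The delicate point is that $H_t$ has a double zero at $r^{(2)}$, and the first argument $t^2r^{(2)}$ is only a factor $t^2$ inside it, so $1/H_t$ is large but finite there. I would handle the finitely many initial factors exactly in interval arithmetic, and apply the crude $C|u|$ bounds only once $|u|\le 1/10$, say, where $|\widetilde H_t(u)|\ge t/2$ is easily checked. If a cancellation were feared, I would cross-check by computing $\partial_r c$ by a symmetric difference quotient in $r$ and comparing the results.
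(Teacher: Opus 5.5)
Your proposal is correct and is essentially the paper's proof. The paper also writes $\partial_r\widehat g^{(2)}=(1-t^2)P_t(t)\,\partial_rc$ with positive prefactors, and uses $P_t(w_0)\,\partial_rc=\eta_t'(r^{(2)})-t^2\bigl(cP_t'+Q_t'\bigr)(w_0)$ at $w_0=t^2r^{(2)}$, which is your quotient-rule formula rearranged; it then certifies the sign by truncating $P_t,P_t',Q_t,Q_t'$ with explicit tail bounds and exact rational interval arithmetic at an enclosure of $\sqrt{t^{(2)}}$.

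One aside is inaccurate: $1/H_t$ is not large at $t^2r^{(2)}$. At the collision $H_t(w)=(1+\sqrt t\,w)^2$, so $H_t(t^2r^{(2)})=(1-t^2)^2\approx0.85$, and there is no delicacy there.
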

\begin{proof}
Put $u=\sqrt{t^{(2)}}$ and $w_0=-u^3$, and, at $t=t^{(2)}$, write
$P=P_t$, $Q=Q_t$, $c=c(t,r^{(2)})$, and $\psi=cP+Q$. Since
$H_t(w)=(1+uw)^2$ and $r^{(2)}=-u^{-1}$, differentiation of the root
cancellation identity gives
\begin{align}
 P(w_0)\,\partial_rc(t,r^{(2)})
 &=\eta_t'(r^{(2)})-u^4\psi'(w_0),\notag\\
 \partial_r\widehat g^{(2)}(t,r^{(2)})
 &=(1-u^4)P(t)\,\partial_rc(t,r^{(2)}).
\label{eq:two-transversality-reduction}
\end{align}
The factors $P(w_0)$ and $(1-u^4)P(t)$ multiplying $\partial_rc$ are positive,
so it remains only to certify the sign of the first right-hand side.

The number $u$ is the root near $0.531$ of
$u^8-2u^6-5u^4-2u^2+1$, and exact rational evaluation isolates it in
$
0.53101005645956<u<0.53101005645958
$.
The recurrence coefficients of \eqref{eq:two-inward} are
\[
\alpha_{t^{(2)}}(s)=\frac{(1-u^2s)(1-u^4s)}{(1+us)^2},\qquad
\beta_{t^{(2)}}(s)=\frac{(1-u^4)^2s}{u^2(1-u^4s)(1+us)^2}.
\]
These are the factors from which $P$ and $Q$ are built in
\eqref{eq:two-P-def}--\eqref{eq:two-Q-def}; bounds on them and their
derivatives therefore control the truncation errors in
\[
 P(w_0),\qquad P'(w_0),\qquad Q(w_0),\qquad Q'(w_0).
\]
On $w_0\leq s\leq0$, direct interval estimates give
\begin{align*}
 u^4&<0.081,& |w_0|&<0.151,& |(\log\alpha)'|&<1.6,\\
 |\beta(s)|&<3.55|s|,& 0&<\beta'(s)<4.3.
\end{align*}
Consequently, replacing the four quantities above by their truncations
before index $8$ incurs an error below $1.5\cdot10^{-8}$. Exact rational
interval arithmetic on the first eight terms (all rational functions of
$u$) then gives
$1.26785<P(w_0)<1.26786$ and
$-0.50305<\eta_t'(r^{(2)})-u^4\psi'(w_0)<-0.50301$.
The first line of \eqref{eq:two-transversality-reduction} now gives
$\partial_rc<0$, and the second proves \eqref{eq:two-nondegeneracy}.
Direct evaluation gives \eqref{eq:two-ghat-r-numeric}.
\end{proof}

\begin{theorem}\label{thm:two-growth}
The point $z^{(2)}=3-2\sqrt2$ is a square-root singularity of $D^{(2)}$, and
\[
 \lim_{n\to\infty}(d_n^{(2)})^{1/n}=3+2\sqrt2.
\]
\end{theorem}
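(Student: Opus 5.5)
We will treat $\widehat g^{(2)}(t,r)$ of \eqref{eq:two-ghat} as a function analytic in both variables near $(t^{(2)},r^{(2)})$. This holds because all the products and sums are regular there, as noted before Proposition~\ref{prop:two-transversality}. We then analyse the composition $t\mapsto \widehat g^{(2)}(t,r(t))$, which by Theorem~\ref{thm:two-exact} equals $g^{(2)}(z(t))$ near the origin. By Proposition~\ref{prop:two-positive-continuation}, this equality persists along $0<t<t^{(2)}$.

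\textbf{Step 1: $r$ near $t^{(2)}$.} From \eqref{eq:two-r-def}, $r(t)=A(t)+B(t)\sqrt{\Delta(t)}$ with $A,B$ analytic near $t^{(2)}$. By \eqref{eq:two-Delta-factor}, $t^{(2)}$ is a simple zero of $\Delta$, so $\sqrt{\Delta(t)}=\sqrt{\Delta'(t^{(2)})(t-t^{(2)})}\,(1+O(t-t^{(2)}))$ with $\Delta'(t^{(2)})<0$. Since $z'(t^{(2)})=(1-t^{(2)})/(1+t^{(2)})^3\neq0$, the map $z\mapsto t$ is locally biholomorphic. Hence $\sqrt{\Delta(t(z))}=\kappa\sqrt{z^{(2)}-z}\,(1+O(z^{(2)}-z))$ for some constant $\kappa\neq0$, where $B(t^{(2)})=(1-t^{(2)})/(2(t^{(2)})^2)\neq0$.

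\textbf{Step 2: Taylor expansion in $r$.} We write
\[
\widehat g^{(2)}(t,r)=\widehat g^{(2)}(t,A(t))+\sum_{m\ge1}\tfrac{1}{m!}\partial_r^m\widehat g^{(2)}(t,A(t))\,\bigl(B(t)\sqrt{\Delta(t)}\bigr)^m .
\]
Collecting even and odd powers gives
\[
g^{(2)}(z)=\mathcal A(z)+\sqrt{z^{(2)}-z}\,\mathcal B(z),
\]
with $\mathcal A,\mathcal B$ analytic near $z^{(2)}$ and
\[
\mathcal B(z^{(2)})=\kappa\,B(t^{(2)})\,\partial_r\widehat g^{(2)}(t^{(2)},r^{(2)}).
\]
This value is nonzero by Proposition~\ref{prop:two-transversality}. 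So $g^{(2)}$, and hence $D^{(2)}=1+zg^{(2)}$, has a genuine square-root singularity at $z^{(2)}$. It cannot be cancelled, because $\sqrt{z^{(2)}-z}$ is not analytic there and $\mathcal B(z^{(2)})\ne0$.

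\textbf{Step 3: the growth rate.} Combined with Proposition~\ref{prop:two-positive-continuation}, Step 2 shows that the radius of convergence is exactly $z^{(2)}$. The continuation along the positive axis agrees with the power series by Vivanti--Pringsheim, and the series cannot extend analytically past a branch point. Cauchy--Hadamard then gives $\limsup (d_n^{(2)})^{1/n}=3+2\sqrt2$. To upgrade this to a limit, we use supermultiplicativity. The class $\Dtwo$ is closed under direct sums, by the same argument as Lemma~\ref{lem:three-sum-decomposition}, since $4123$ and $4312$ both begin with their maximum. Hence $d^{(2)}_{m+n}\ge d^{(2)}_md^{(2)}_n$, and Fekete's lemma shows that $\lim (d_n^{(2)})^{1/n}$ exists and equals the $\limsup$.

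The main obstacle is ensuring that the analytic structure at $t^{(2)}$ is faithfully inherited: the identification of $g^{(2)}$ with $\widehat g^{(2)}(t,r(t))$ must hold on a full complex neighbourhood of $t^{(2)}$ minus the cut. This requires that no point $t^{2i+2}r(t)$ or $t^{2i+1}$ hits $\Sigma_t$ for complex $t$ near $t^{(2)}$, and that $P_t(t^2r)\neq0$. We would get this from continuity and the strict inequalities already verified on the real axis, since the positivity of $P$ persists in a small neighbourhood.
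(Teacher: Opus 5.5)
Your proposal is correct and follows the paper's own argument. The paper uses the same ingredients: the square-root expansion of $r(t)$ at the simple zero $t^{(2)}$ of $\Delta$, the identification $g^{(2)}(z(t))=\widehat g^{(2)}(t,r(t))$ continued along $(0,t^{(2)})$, the nonvanishing of $\partial_r\widehat g^{(2)}$ from Proposition~\ref{prop:two-transversality}, local invertibility of $z(t)$, Proposition~\ref{prop:two-positive-continuation} to pin the radius at $z^{(2)}$, and sum closure (your Fekete step) for existence of the limit. Your even/odd splitting and your closing ``obstacle'' paragraph only spell out what the paper leaves implicit; the latter follows directly from the joint analyticity of $\widehat g^{(2)}$ near $(t^{(2)},r^{(2)})$ together with the identity theorem.
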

\begin{proof}
As $t\uparrow t^{(2)}$, one has the expansion
\begin{equation}\label{eq:two-r-local}
 r(t)=r^{(2)}+\frac{1-t^{(2)}}{2(t^{(2)})^2}
 \sqrt{-t^{(2)}\Delta'(t^{(2)})}
 \sqrt{1-\frac t{t^{(2)}}}
 +O\left(1-\frac t{t^{(2)}}\right).
\end{equation}
Theorem~\ref{thm:two-exact} says
$g^{(2)}(z(t))=\widehat g^{(2)}(t,r(t))$. Proposition
\ref{prop:two-transversality} therefore gives a nonzero square-root term.
Since $z'(t^{(2)})\neq0$, the same holds in the local variable
$1-z/z^{(2)}$. Thus $D^{(2)}$ is singular at $z^{(2)}$, while Proposition
\ref{prop:two-positive-continuation} rules out a smaller positive
singularity. The radius is $z^{(2)}$. Existence of the growth-rate limit also
follows directly from sum closure, since both basis permutations are sum
indecomposable; alternatively it follows from the coefficient asymptotic
proved below.
\end{proof}

\subsection{Asymptotics}\label{sec:two-asymptotics}
It remains to exclude other singularities on the dominant circle. Write
\[
 b(t)=(1+t)(1-3t+t^2)=1-2t-2t^2+t^3
\]
and define
\[
 v(t)=-tr(t).
\]
The equation $\widetilde H_t(r(t))=0$ becomes
\begin{equation}\label{eq:two-v-equation}
 tv(t)^2-b(t)v(t)+t^2=0,
 \qquad
 v(t)=\frac{t^2+tv(t)^2}{b(t)}.
\end{equation}
Thus $v$ is the branch of this algebraic equation that is analytic at the
origin, characterised by $v(t)=t^2+O(t^3)$.

\begin{lemma}\label{lem:two-v-bound}
The Taylor series of $v$ at the origin has nonnegative coefficients. For
$|t|<t^{(2)}$,
\begin{equation}\label{eq:two-v-bound}
 |v(t)|\leq v(|t|)<v(t^{(2)})=\sqrt{t^{(2)}}<1.
\end{equation}
\end{lemma}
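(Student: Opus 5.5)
Rewrite the fixed-point form in \eqref{eq:two-v-equation} with a denominator that has nonnegative coefficients. Since $b(t)=1-2t-2t^2+t^3$, the coefficient $t^3$ is a nuisance; instead use the original quadratic $tv^2-bv+t^2=0$, i.e.\ $v=t^2+tv^2+(2t+2t^2-t^3)v$, which still contains $-t^3v$. The cleanest route is to rewrite this as $v(1-2t-2t^2+t^3)=t^2+tv^2$ and multiply by $(1+t)^{-1}$. Since $b(t)=(1+t)(1-3t+t^2)$, this gives
\[
 v(1-3t+t^2)=\frac{t^2+tv^2}{1+t}.
\]
That is still not manifestly positive, so I would instead work with the substitution used earlier, $z=t/(1+t)^2$, and show that $v$ is a positive power series in $z$ composed with $t(z)$. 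The latter has nonnegative coefficients, and compositions of positive series stay positive. Concretely, $1-3t+t^2=(1+t)^2(1-5z)$, and with $v=t\,\widetilde v$ the equation becomes something like $\widetilde v=z(1+\widetilde v^2)+(\text{positive})\,z\widetilde v$, a Catalan/Schröder-type equation. Alternatively, I would let $y=v/t$ and derive an equation of the form $y=\Phi(t,y)$ with $\Phi$ having nonnegative coefficients and $\Phi(0,0)=0$. Iterating the fixed point then gives nonnegative Taylor coefficients for $y$, hence for $v$. This is the step I expect to be the main obstacle: choosing a variable in which the fixed-point equation is manifestly positive. If the $z$-substitution works, it settles the matter, because $t(z)$ also has nonnegative coefficients by \eqref{eq:t-of-z}.

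Granting nonnegativity, the triangle inequality gives $|v(t)|\le v(|t|)$ wherever the series converges. Next I need the radius of convergence of $v$ to be at least $t^{(2)}$. On $[0,t^{(2)})$ the branch $v(t)=-tr(t)$ from \eqref{eq:two-r-def} is real-analytic, since $\Delta>0$ there: $t^{(2)}$ is the first positive root of $\Delta$ by \eqref{eq:two-Delta-factor}. Pringsheim's theorem then shows that the radius is at least $t^{(2)}$, because a smaller radius would force a positive singularity before the first branch point.

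Finally, $v(|t|)$ is increasing on $[0,t^{(2)}]$, since its coefficients are nonnegative and it is nonconstant. At $t=t^{(2)}$ the root is double, so $r^{(2)}=-u^{-1}$ with $u=\sqrt{t^{(2)}}$, as used in the proof of Proposition~\ref{prop:two-transversality}. This gives $v(t^{(2)})=t^{(2)}/\sqrt{t^{(2)}}=\sqrt{t^{(2)}}<1$. Strictness of the inequality $v(|t|)<v(t^{(2)})$ for $|t|<t^{(2)}$ follows from strict monotonicity; continuity up to $t^{(2)}$ holds because $v$ has a square-root singularity there, so it extends continuously.
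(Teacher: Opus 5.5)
You have a genuine gap at the first claim of the lemma, the nonnegativity of the Taylor coefficients of $v$ in $t$. You flag this as the main obstacle yourself and never settle it. The $z$-route could not settle it even if carried through. Proving that $v(t(z))$ has nonnegative coefficients in $z$ says nothing about the coefficients of $v$ in $t$, because returning to $t$ means composing with $z(t)=t/(1+t)^2=t-2t^2+3t^3-\cdots$, which has alternating signs. For instance, $v(t(z))=z$ would give $v(t)=t/(1+t)^2$. The guessed Schr\"oder-type form also does not materialise: with $v=t\widetilde v$ and $b=(1+t)^3(1-5z)$ one gets $(1+t)(1-5z)\widetilde v=z+zt\widetilde v^2$, in which $t$ still appears explicitly. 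Your fallback with $y=v/t$ is stated without exhibiting a positive $\Phi$. Everything afterwards depends on this positivity: the triangle inequality, Pringsheim, and the monotonicity of $v(|t|)$. So the proof as proposed is incomplete.

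The missing observation is that the $t^3$ term in $b$ is harmless once you divide by $b$ rather than expand it, because $1/b(t)$ has positive coefficients. Write $1/b(t)=\sum e_nt^n$. Then $e_0,e_1,e_2=1,2,6$ and $e_n=2e_{n-1}+2e_{n-2}-e_{n-3}$, and induction shows that $(e_n)$ is positive and increasing. Equivalently, $1/(1-3t+t^2)=\sum F_{2n+2}t^n$, and dividing by $1+t$ takes alternating partial sums of an increasing sequence. Now iterate the second form of \eqref{eq:two-v-equation}, $v=(t^2+tv^2)\,b(t)^{-1}$, starting from $v=0$. The iteration converges $t$-adically and preserves nonnegativity; this is the paper's proof.

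With that in place, the rest of your argument is sound. Your Pringsheim argument for the radius is a valid substitute for the paper's direct location of the zeros of $\Delta$, whose moduli are $t^{(2)},1,1,1/t^{(2)}$ by \eqref{eq:two-Delta-factor}. The double-root evaluation $v(t^{(2)})=\sqrt{t^{(2)}}$ matches the paper. Monotone convergence of the nonnegative series then identifies the boundary value, giving the strict inequality.
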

\begin{proof}
Write $1/b(t)=\sum_{n\geq0}e_nt^n$. Its coefficients begin $1,2,6$ and
satisfy $e_n=2e_{n-1}+2e_{n-2}-e_{n-3}$. They are positive and strictly
increasing: once $e_{n-1}>e_{n-3}$, the recurrence gives
$e_n>e_{n-1}+2e_{n-2}>e_{n-1}$. Iteration of the second equation in
\eqref{eq:two-v-equation}, starting with $v=0$, proves coefficientwise
nonnegativity.

The identity \eqref{eq:two-Delta-factor} shows that the four zeros of
$\Delta$ have moduli $t^{(2)},1,1,(t^{(2)})^{-1}$. Hence $v$ is analytic for
$|t|<t^{(2)}$. At the positive boundary point the double-root factorization
gives $v(t^{(2)})=\sqrt{t^{(2)}}$. The triangle inequality for its nonnegative
series proves \eqref{eq:two-v-bound}.
\end{proof}

The two roots of $H_t$ are
\[
 r(t)=-\frac{v(t)}t,
 \qquad
 s(t)=-\frac1{v(t)},
\]
and hence
\[
 H_t(w)=\left(1+\frac t{v(t)}w\right)(1+v(t)w).
\]

\begin{proposition}\label{prop:two-t-disc}
The right-hand side of \eqref{eq:two-g-exact}, initially defined near
$t=0$, is analytic throughout $|t|<t^{(2)}$.
\end{proposition}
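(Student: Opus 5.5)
The plan is to rewrite every ingredient of \eqref{eq:two-g-exact} in terms of $t$ and the algebraic function $v(t)$. Lemma~\ref{lem:two-v-bound} makes $v$ analytic on $|t|<t^{(2)}$ with $|v|<1$. The point is that $r(t)=-v/t$ is never used alone, only through $t^2r=-tv$. Using $H_t(w)=(1+tw/v)(1+vw)$, we have
\[
 \alpha_t(w)=\frac{(1-tw)(1-t^2w)}{(1+tw/v)(1+vw)},
\]
and we evaluate it at $w=t^{2i}\cdot(-tv)$ and at $w=t^{2i+1}$. For $w=-t^{2i+1}v$ the denominator is
\[
 (1-t^{2i+2})(1-t^{2i+1}v^2).
\]
For $w=t^{2i+1}$ it is
\[
 (1+t^{2i+2}/v)(1+t^{2i+1}v)=v^{-1}(v+t^{2i+2})(1+t^{2i+1}v).
\]
The factors $1-t^{2i+2}$ and $1-t^{2i+1}v^2$ never vanish, since $|t|<1$ and $|v|<1$. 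The same holds for the numerators $1-t^{m}w$ and for the factor $1-t^2w$ in $\beta_t$ and $K_t$. Since $\alpha_t(w)=1+O(|t|^{2i})$ uniformly, both $P_t(t^2r)$ and $Q_t(t^2r)$ become normally convergent products and sums of analytic functions of $t$ on $|t|\le\rho<t^{(2)}$. Hence they are analytic in $t$. The quantity $\eta_t(r)$ becomes a rational expression in $t,v$ whose denominators $t(1+v)(1+tv)^2$ carry a spurious $1/t$, which is cancelled by $r\cdot(\cdots)$. Write $\eta=(1-t^2)^2v/\bigl(t^2(1+v)(1+tv)^2\bigr)$ and note that $v=t^2+O(t^3)$, so the singularity at $0$ is removable.

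Next, the factor $v^{-1}(v+t^{2i+2})$ appearing in $P_t(t)$ and $Q_t(t)$ is the delicate point. Zeros of $v(t)+t^{2i+2}$ would be poles of $\alpha_t(t^{2i+1})$. The remedy is to avoid dividing: we form $c(t)P_t(t)+Q_t(t)$ directly. Lemma~\ref{lem:two-PQ}, read without division, says that
\[
 \Psi=cP+Q
\]
is the combinatorial solution. Alternatively, one can note that $|t^{2i+2}|<|v|$ fails only for small $i$. I would instead bound directly: $v(t)=t^2(1+O(t))$ with nonnegative coefficients, so for $i\ge1$ we have $|v(t)+t^{2i+2}|\ge|v|-|t|^{4}$. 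This requires a lower bound on $|v|$, which we do not have on the whole disc.

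For this reason I would rather take a cleaner route and treat the common quantity. For $0<|t|<t^{(2)}$ the zeros $r=-v/t$ and $s=-1/v$ of $H_t$ satisfy $|s|>1$, while the forward iterates $t^{2i+1}$ satisfy $|t^{2i+1}|<1$, so these iterates avoid $s$. The remaining concern is that $t^{2i+1}$ might hit $r=-v/t$, which would require $v=-t^{2i+2}$. In that case I would use the undivided equation \eqref{eq:two-q-equation} exactly as in the root-cancellation step. At such a $t$, the identity $\Psi(t^2r)=\eta$ is precisely what makes the numerator vanish, so the apparent pole is removable. Formally, I would show that $cP_t(t)+Q_t(t)$ equals the iterate formula with $\Psi(t^2r)$ substituted, which is analytic.

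The main obstacle is therefore this removability at coincidences $v(t)=-t^{2i+2}$, which may or may not occur in the disc. I would first try to exclude them outright. Since $v$ has nonnegative coefficients, $v(t)=t^2\,u(t)$ with $u(0)=1$ and nonnegative coefficients. I would attempt to show $|u(t)|>|t|^{2i}$ for all $i\geq1$, for instance via $\operatorname{Re}$-positivity or the bound $|u(t)-1|\le u(|t|)-1$. If that fails, I would fall back on the removable-singularity argument above, combined with Riemann's theorem, using local boundedness of the expression.
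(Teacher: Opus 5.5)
Your treatment of $c(t)$ is sound and matches the paper. The factorisation $H_t(-t^{2i+1}v)=(1-t^{2i+2})(1-t^{2i+1}v^2)$ is exactly the check that the inward orbit of $t^2r$ avoids $r$ and $s$, and your rewriting of $\eta_t(r)$ removes the apparent pole at $t=0$.

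The gap is in $P_t(t)$ and $Q_t(t)$. Analyticity there requires $H_t(t^{2i+1})\neq0$ for every $i\geq0$ and $|t|<t^{(2)}$. You leave this as ``the main obstacle'' and offer two strategies, neither carried out, and your proposed exclusion only addresses $i\geq1$. The first strategy cannot work as stated. Near the rim $u(|t|)$ is close to $u(t^{(2)})=(t^{(2)})^{-3/2}\approx6.68$, so $|u(t)-1|\leq u(|t|)-1$ gives no lower bound on $|u(t)|$ at all. The removable-singularity fallback is also incomplete, for three reasons:
\begin{itemize}
\item The vanishing of the numerator $L_t(r)\Psi_t(t^2r)+K_t(r)$ at a coincidence only guarantees removability when $H_t(t^{2i+1})$ has a \emph{simple} zero there.
\item The ``local boundedness'' you would feed into Riemann's theorem is precisely what has to be proved.
\item Identifying $c(t)P_t+Q_t$ with the combinatorial $\Psi_t$ is only known near $t=0$, so it cannot be invoked elsewhere in the disc.
\end{itemize}

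The missing idea is simply not to pass through $v$ at these points. $H_t(t^{2i+1})=1+b(t)t^{2i}+t^{4i+3}$ is an explicit polynomial in $t$. For $i=0$ it equals $2(1-t)(1-t^2)\neq0$. For $i\geq1$, with $a=t^{(2)}$,
\[
 |H_t(t^{2i+1})-1|\leq(1+2a+2a^2+a^3)a^2+a^7<0.139 .
\]
By your own factorisation $H_t(t^{2i+1})=v^{-1}(v+t^{2i+2})(1+t^{2i+1}v)$, this shows that the coincidences $v=-t^{2i+2}$ never occur in the disc. So no removability argument is needed. The same uniform bound also gives normal convergence of $P_t(t)$ and $Q_t(t)$ on compact subsets, which is how the paper concludes.
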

\begin{proof}
We verify here the avoidance conditions of Lemma~\ref{lem:two-PQ} at the
two arguments $t^2r(t)$ and $t$ needed in \eqref{eq:two-c-formula} and
\eqref{eq:two-g-exact}; we also verify that the product in the denominator
of \eqref{eq:two-c-formula} is nonzero.

For $0<|t|<t^{(2)}$, the arguments in $P_t(t^2r)$ and $Q_t(t^2r)$ are
$t^{2j+2}r=-t^{2j+1}v$. They cannot equal $r$, because $|t|<1$, and
equality with $s$ would give $t^{2j+1}v^2=1$, contrary to
Lemma~\ref{lem:two-v-bound}. The factors of $L_t$ and the additional
denominator $1-t^2w$ in $K_t$ are nonzero because
$|t^{2j+2}v|<1$ and $|t^{2j+3}v|<1$. In particular,
$P_t(t^2r)\neq0$.

For $P_t(t)$ and $Q_t(t)$ the arguments are $t^{2j+1}$. At $j=0$,
\[
 H_t(t)=1+b(t)+t^3=2(1-t)(1-t^2),
\]
which is nonzero in the disc. For $j\geq1$, putting $a=t^{(2)}$ gives
\[
 |H_t(t^{2j+1})-1|
 \leq(1+2a+2a^2+a^3)a^2+a^7<0.139.
\]
The remaining linear factors are nonzero as well. These estimates are
uniform on compact subsets and imply normal convergence of all products
and sums. They also show that the denominator in
\eqref{eq:two-c-formula} never vanishes, proving the claim.
\end{proof}

In the terminology of Flajolet and Sedgewick~\cite[Chapter~VI]{FS},
$\Delta$-regularity at a dominant point means analyticity in a disc
extending past that point after a narrow wedge with vertex there has been
removed.

\begin{proposition}\label{prop:two-Delta-regular}
The function $D^{(2)}$ is analytic in a neighbourhood of every point of
$|z|\leq z^{(2)}$ other than $z^{(2)}$, and in a dented neighbourhood of
$z^{(2)}$. Thus $z^{(2)}$ is its unique dominant singularity and $D^{(2)}$ is
$\Delta$-regular there.
\end{proposition}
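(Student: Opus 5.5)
Pass to the variable $t$. Since $z\mapsto t(z)$ is analytic on $|z|<1/4$ with image in $|t|<1$, and $z(t)=t/(1+t)^2$ satisfies $|z(t)|\le |t|/(1-|t|)^2$, the closed disc $|z|\le z^{(2)}$ is the biholomorphic image of a compact region $\mathcal T$ in the $t$-disc. I would first show that $\mathcal T\setminus\{t^{(2)}\}$ lies inside the open disc $|t|<t^{(2)}$. Write $t=\rho e^{i\theta}$. Then $|z(t)|=\rho/|1+t|^2$, and $|1+t|^2=1+2\rho\cos\theta+\rho^2\le(1+\rho)^2$, with equality only for $\theta=0$. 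Hence $|z(t)|\ge z(\rho)$, and $|z(t)|\le z^{(2)}$ forces $z(\rho)\le z^{(2)}$. Because $z$ is increasing on $(0,1)$, this gives $\rho\le t^{(2)}$. If $\rho=t^{(2)}$, equality must hold throughout, so $\theta=0$ and $t=t^{(2)}$. Proposition~\ref{prop:two-t-disc} then makes $D^{(2)}(z)=1+zg^{(2)}(z)$, via \eqref{eq:two-D-exact}, analytic in a neighbourhood of every point of $|z|\le z^{(2)}$ other than $z^{(2)}$. The reason is that every such $z$ has its preimage $t(z)$ in the open disc $|t|<t^{(2)}$, and $t(\cdot)$ is a local biholomorphism there because $z'(t)\ne0$.

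Near $z^{(2)}$ I would work in the two-variable form $g^{(2)}(z(t))=\widehat g^{(2)}(t,r(t))$. The function $\widehat g^{(2)}(t,r)$ is jointly analytic near $(t^{(2)},r^{(2)})$. This holds because all arguments $t^{2j+2}r$ and $t^{2j+1}$ stay in a compact set away from $\Sigma_t$ and from the zeros of $L_t$, by the same estimates as in the proofs of Propositions~\ref{prop:two-positive-continuation} and \ref{prop:two-t-disc}; these estimates are open conditions and so persist on a small polydisc. It also uses $P_t(t^2r)\ne0$ there. Meanwhile $r(t)$, given by \eqref{eq:two-r-def}, is analytic on a neighbourhood of $t^{(2)}$ slit along $[t^{(2)},t^{(2)}+\epsilon)$. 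Indeed, $t^{(2)}$ is a simple zero of $\Delta$ (the other zeros have moduli $1,1,1/t^{(2)}$), so $\sqrt{\Delta}$ has a square-root branch point there. Transporting through the local biholomorphism $t\mapsto z$ turns a small disc around $t^{(2)}$, minus a slit or narrow wedge, into a disc around $z^{(2)}$ minus a wedge. This gives analyticity in a dented neighbourhood, i.e.\ $\Delta$-regularity. Combining the two steps with Theorem~\ref{thm:two-growth}, which shows $z^{(2)}$ is genuinely singular, makes it the unique dominant singularity.

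The main obstacle is the first step: it needs the full disc $|t|<t^{(2)}$ from Proposition~\ref{prop:two-t-disc}, not just the interval. It also needs a check that no other preimage of $z$ sneaks into $|t|<1$. Injectivity of $z$ on the unit disc, noted in \S\ref{sec:three-asymptotics}, settles this. A minor point is the uniformity of the avoidance estimates up to the boundary point $t^{(2)}$ for the local argument. I would handle it by noting that at $t^{(2)}$ itself we have $v=\sqrt{t^{(2)}}<1$ and $H_t(t)=2(1-t)(1-t^2)\ne0$, so all strict inequalities used survive in a neighbourhood.
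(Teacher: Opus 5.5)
Your proposal is correct and follows the paper's proof. Continuation at every point of the closed disc other than $z^{(2)}$ comes from Proposition~\ref{prop:two-t-disc} once $|t(z)|<t^{(2)}$ is known there, and the dented neighbourhood comes from joint analyticity of $\widehat g^{(2)}(t,r)$ near $(t^{(2)},r^{(2)})$, the square-root branch of $r(t)$, and $z'(t^{(2)})\neq0$. The differences are minor: you get $|t(z)|<t^{(2)}$ from $|z(t)|\geq z(|t|)$ and monotonicity of $z$ on $(0,1)$ rather than from positivity of the Taylor coefficients of $t(z)$, and you leave implicit the compactness (finite covering) step that the paper states to assemble the local continuations into a genuine $\Delta$-domain.
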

\begin{proof}
The inverse $t(z)$ in \eqref{eq:t-of-z} has strictly positive
coefficients. Hence, for $|z|\leq z^{(2)}$,
\[
 |t(z)|\leq t(|z|)\leq t(z^{(2)})=t^{(2)}.
\]
On $|z|=z^{(2)}$, equality can hold only if all powers of $z$ have the same
argument, hence only at $z=z^{(2)}$. Proposition~\ref{prop:two-t-disc} gives
analytic continuation across every other point of the closed dominant
circle.

Near $t=t^{(2)}$, the two-variable expression $\widehat g^{(2)}(t,r)$ is
analytic, while $r(t)$ has only the square-root branch in
\eqref{eq:two-r-local}. Since $t(z)$ is analytic near $z^{(2)}$, this gives
continuation in an indented neighbourhood of $z^{(2)}$. A finite covering of
the remaining compact part of the circle gives the asserted
$\Delta$-domain.
\end{proof}

Define
\[
 \kappa^{(2)}=\frac{1-t^{(2)}}{2(t^{(2)})^2}
 \sqrt{-t^{(2)}\Delta'(t^{(2)})}.
\]
The coefficient of $\sqrt{1-z/z^{(2)}}$ in the local expansion of
$D^{(2)}$ is
\begin{equation}\label{eq:two-D1}
 D_1^{(2)}=z^{(2)}\,\partial_r\widehat g^{(2)}(t^{(2)},r^{(2)})
 \kappa^{(2)}
 \sqrt{\frac{1+t^{(2)}}{1-t^{(2)}}}
 =-0.3012581790492796850\ldots.
\end{equation}

\begin{theorem}\label{thm:two-asymptotic}
As $n\to\infty$,
\[
 d_n^{(2)}\sim C^{(2)}(3+2\sqrt2)^nn^{-3/2},
 \qquad
 C^{(2)}=-\frac{D_1^{(2)}}{2\sqrt\pi}
 =0.08498336328908425176\ldots.
\]
\end{theorem}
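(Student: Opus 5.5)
The proof is a direct application of singularity analysis in the sense of Flajolet and Sedgewick~\cite[Chapter~VI]{FS}. By Proposition~\ref{prop:two-Delta-regular}, $D^{(2)}$ is analytic in a $\Delta$-domain at $z^{(2)}=3-2\sqrt2$, and $z^{(2)}$ is its unique dominant singularity. It therefore suffices to establish a local expansion of the form
\[
 D^{(2)}(z)=D_0^{(2)}+D_1^{(2)}\sqrt{1-z/z^{(2)}}+O\bigl(1-z/z^{(2)}\bigr)
\]
valid in the dented neighbourhood, with $D_1^{(2)}\neq0$. The transfer theorem, together with $[z^n]\sqrt{1-z/z^{(2)}}\sim -(z^{(2)})^{-n}n^{-3/2}/(2\sqrt\pi)$, then gives $d_n^{(2)}\sim C^{(2)}(3+2\sqrt2)^nn^{-3/2}$ with $C^{(2)}=-D_1^{(2)}/(2\sqrt\pi)$.

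To obtain the expansion, I would compose three local developments.

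First, near $(t^{(2)},r^{(2)})$ the function $\widehat g^{(2)}(t,r)$ is jointly analytic, as noted before Proposition~\ref{prop:two-transversality}. Taylor expansion gives
\[
 \widehat g^{(2)}(t,r)=\widehat g^{(2)}(t^{(2)},r^{(2)})+\partial_r\widehat g^{(2)}\,(r-r^{(2)})+O\bigl(|t-t^{(2)}|+|r-r^{(2)}|^2\bigr).
\]

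Second, insert the Puiseux expansion \eqref{eq:two-r-local} of $r(t)$. This expansion holds in a slit neighbourhood, on the branch that continues the real branch from $t<t^{(2)}$. It gives $r(t)-r^{(2)}=\kappa^{(2)}\sqrt{1-t/t^{(2)}}+O(1-t/t^{(2)})$, so that
\[
 g^{(2)}(z(t))=\text{const}+\partial_r\widehat g^{(2)}\,\kappa^{(2)}\sqrt{1-t/t^{(2)}}+O(1-t/t^{(2)}).
\]

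Third, convert to the variable $z$. Since $z'(t^{(2)})\neq0$, we have $1-t/t^{(2)}=\lambda(1-z/z^{(2)})+O((1-z/z^{(2)})^2)$ with $\lambda=z^{(2)}/(t^{(2)}z'(t^{(2)}))$. Using $z=t/(1+t)^2$ and $z'=(1-t)/(1+t)^3$ gives $\lambda=(1+t^{(2)})/(1-t^{(2)})$, which explains the factor $\sqrt{(1+t^{(2)})/(1-t^{(2)})}$ in \eqref{eq:two-D1}. Finally, $D^{(2)}=1+zg^{(2)}$, and $z=z^{(2)}+O(1-z/z^{(2)})$, so the coefficient of the square root is multiplied by $z^{(2)}$. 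This yields exactly the expression \eqref{eq:two-D1} for $D_1^{(2)}$.

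Its negativity follows from three facts: $\partial_r\widehat g^{(2)}<0$ by Proposition~\ref{prop:two-transversality}, $\kappa^{(2)}>0$ because $\Delta'(t^{(2)})<0$ (since $\Delta$ is positive on $[0,t^{(2)})$ and has a simple root at $t^{(2)}$), and the remaining factors are positive. Consequently $C^{(2)}>0$, and its numerical value comes from the numerics already established.

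The main obstacle is the uniformity of the error terms on the $\Delta$-domain rather than just on the real segment. Analyticity of $\widehat g^{(2)}$ in a full bidisc around $(t^{(2)},r^{(2)})$ handles this. I also need the Puiseux expansion of $r(t)$ to hold uniformly for complex $t$ near $t^{(2)}$ off the slit $[t^{(2)},\infty)$. This holds because $\sqrt{\Delta(t)}$ has a simple-zero factor $\sqrt{t^{(2)}-t}$ times an analytic nonvanishing factor there. I would also check that this slit maps under the local biholomorphism $t(z)$ to a slit compatible with the indentation used in Proposition~\ref{prop:two-Delta-regular}.
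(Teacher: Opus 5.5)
Your proposal is correct and follows the same route as the paper: $\Delta$-regularity from Proposition~\ref{prop:two-Delta-regular}, a square-root local expansion with nonzero coefficient, and the square-root transfer theorem. Your derivation of \eqref{eq:two-D1} combines three ingredients: the joint analyticity of $\widehat g^{(2)}(t,r)$, the expansion \eqref{eq:two-r-local}, and the local change of variable, which produces the factor $\sqrt{(1+t^{(2)})/(1-t^{(2)})}$; the sign then follows from $\Delta'(t^{(2)})<0$ and Proposition~\ref{prop:two-transversality}. This only makes explicit what the paper asserts just before the theorem.
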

\begin{proof}
Combine Proposition~\ref{prop:two-Delta-regular}, the nonzero
coefficient above, and the square-root transfer theorem
\cite[Chapter~VI]{FS}.
\end{proof}

In particular,
\[
 \lim_{n\to\infty}(d_n^{(2)})^{1/n}
 =\lim_{n\to\infty}\frac{d_{n+1}^{(2)}}{d_n^{(2)}}
 =3+2\sqrt2.
\]

\subsection{Probabilistic consequences}\label{sec:two-probabilistic}
The sum decomposition leads to a different probabilistic
consequence here than in
the three-pattern class. Let $K_n^{(2)}$ be the number of
sum-indecomposable components of a uniformly random member of
$\Dtwo\cap \Sym_n$.

\begin{corollary}\label{cor:two-components}
Set
\[
 I^{(2)}(z)=1-\frac1{D^{(2)}(z)},\qquad
 D_0^{(2)}=D^{(2)}(\rho^{(2)}),\qquad
 \theta^{(2)}=I^{(2)}(\rho^{(2)})=1-\frac1{D_0^{(2)}}.
\]
Then, for every fixed $k\geq1$,
\begin{equation}\label{eq:two-component-limit}
 \mathbb P(K_n^{(2)}=k)
 \longrightarrow k(1-\theta^{(2)})^2(\theta^{(2)})^{k-1}.
\end{equation}
Equivalently, the limiting probability generating function is
\begin{equation}\label{eq:two-component-pgf}
 \frac{u(1-\theta^{(2)})^2}{(1-u\theta^{(2)})^2}.
\end{equation}
In particular,
\begin{align}
 \lim_{n\to\infty}\mathbb P(K_n^{(2)}=1)
 &=\frac1{(D_0^{(2)})^2},\label{eq:two-indecomp-limit}\\
 \lim_{n\to\infty}\mathbb E K_n^{(2)}
 &=\frac{1+\theta^{(2)}}{1-\theta^{(2)}}.
 \label{eq:two-component-mean}
\end{align}
Numerically,
\begin{align*}
 D_0^{(2)}&=1.38729662302128\ldots,&
 \theta^{(2)}&=0.279173621988513\ldots,\\
 \frac1{(D_0^{(2)})^2}&=0.519590667237160\ldots,&
 \frac{1+\theta^{(2)}}{1-\theta^{(2)}}&=1.77459324604257\ldots.
\end{align*}
The same limit $1/(D_0^{(2)})^2$ is the asymptotic proportion of
sum-indecomposable members of $\Dtwo$.
\end{corollary}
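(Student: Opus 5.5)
The plan is to use the sequence construction and marking by components, as in Corollary~\ref{cor:three-components}, but now in the critical composition regime. First, $\Dtwo$ is closed under direct sums: both basis elements begin with their maximum and are therefore sum-indecomposable. The argument of Lemma~\ref{lem:three-sum-decomposition} then applies verbatim. It gives $D^{(2)}=1/(1-I^{(2)})$, where $I^{(2)}$ is the generating function of sum-indecomposable members (nonnegative coefficients). Marking components gives
\[
 D^{(2)}(z,u)=\frac1{1-uI^{(2)}(z)},\qquad
 [z^n]I^{(2)}(z)^k=[z^n]\bigl(1-1/D^{(2)}(z)\bigr)^k,
\]
and hence $\mathbb P(K_n^{(2)}=k)=[z^n](I^{(2)})^k/d_n^{(2)}$.

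Next I will check that $I^{(2)}$ inherits the singular structure of $D^{(2)}$. By Proposition~\ref{prop:two-Delta-regular}, $D^{(2)}$ is analytic in a $\Delta$-domain at $\rho^{(2)}=z^{(2)}$, with local expansion $D^{(2)}=D_0^{(2)}+D_1^{(2)}\sqrt{1-z/\rho^{(2)}}+O(1-z/\rho^{(2)})$. Here $D_1^{(2)}\neq0$ by \eqref{eq:two-D1}, and $D_0^{(2)}$ is finite since the square-root singularity is integrable. I must rule out zeros of $D^{(2)}$ in the closed disc $|z|\le\rho^{(2)}$. Inside the open disc, $I^{(2)}=1-1/D^{(2)}$ as power series and $D^{(2)}$ is a convergent sequence construction, so $D^{(2)}$ never vanishes there. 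On the circle, $|I^{(2)}(z)|\le I^{(2)}(\rho^{(2)})=\theta^{(2)}<1$: positivity gives $D^{(2)}(\rho^{(2)})\ge1$ finite, hence $\theta^{(2)}<1$. So $1-I^{(2)}\neq0$ on the closed disc, and by continuity in a slightly enlarged $\Delta$-domain. Consequently $I^{(2)}$ is $\Delta$-analytic with
\[
 I^{(2)}=\theta^{(2)}+\frac{D_1^{(2)}}{(D_0^{(2)})^2}\sqrt{1-z/\rho^{(2)}}+O(1-z/\rho^{(2)}).
\]

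For fixed $k$, I will expand $(I^{(2)})^k=(\theta^{(2)})^k+k(\theta^{(2)})^{k-1}\frac{D_1^{(2)}}{(D_0^{(2)})^2}\sqrt{1-z/\rho^{(2)}}+O(1-z/\rho^{(2)})$. The square-root transfer theorem \cite[Chapter~VI]{FS}, applied to both numerator and denominator (whose square-root coefficients are $k(\theta^{(2)})^{k-1}D_1^{(2)}/(D_0^{(2)})^2$ and $D_1^{(2)}$, both giving $n^{-3/2}(\rho^{(2)})^{-n}$), yields
\[
 \mathbb P(K_n^{(2)}=k)\to \frac{k(\theta^{(2)})^{k-1}}{(D_0^{(2)})^2}=k(1-\theta^{(2)})^2(\theta^{(2)})^{k-1},
\]
using $1/D_0^{(2)}=1-\theta^{(2)}$. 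This proves \eqref{eq:two-component-limit}. Summing $\sum_k k x^{k-1}=(1-x)^{-2}$ shows that these limits form a probability distribution with pgf \eqref{eq:two-component-pgf}. Then $k=1$ gives \eqref{eq:two-indecomp-limit}, which is also the asymptotic proportion of indecomposables since $P(K_n=1)=i_n/d_n$. Differentiating the pgf at $u=1$ gives $1+2\theta^{(2)}/(1-\theta^{(2)})=(1+\theta^{(2)})/(1-\theta^{(2)})$.

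For the mean statement I also need convergence of $\mathbb E K_n^{(2)}$, not just of the distribution, and this is the step I expect to be the main obstacle. I would write $\mathbb E K_n=[z^n]\partial_u D^{(2)}(z,u)|_{u=1}/d_n=[z^n]I^{(2)}(D^{(2)})^2/d_n$. The function $I^{(2)}(D^{(2)})^2=D^{(2)}(D^{(2)}-1)$ is $\Delta$-analytic with square-root coefficient $(2D_0^{(2)}-1)D_1^{(2)}$. Transfer then gives the limit $2D_0^{(2)}-1=(1+\theta^{(2)})/(1-\theta^{(2)})$, consistent with the above, and this avoids any uniform-integrability argument. The numerical values follow by evaluating the exact formula of Theorem~\ref{thm:two-exact} at $t^{(2)}$, where the expression is continuous.
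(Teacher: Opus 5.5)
Your proposal is correct and follows essentially the paper's argument: sum closure and marking give $D^{(2)}(z,u)=1/(1-uI^{(2)}(z))$, the local expansion $D^{(2)}=D_0^{(2)}+D_1^{(2)}\sqrt{1-z/\rho^{(2)}}+O(1-z/\rho^{(2)})$ passes to $I^{(2)}$, and the square-root transfer theorem yields the limits. The differences are only organisational. You transfer $[z^n](I^{(2)})^k$ for each fixed $k$ and get the mean from a separate transfer of $D^{(2)}(D^{(2)}-1)$, whereas the paper transfers uniformly for $u$ near $1$ and then differentiates. Your bound $|I^{(2)}|\le\theta^{(2)}<1$ on the closed disc makes explicit the non-vanishing of $D^{(2)}$ that the paper leaves tacit. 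Also, since $\theta^{(2)}<1$, this is the \emph{subcritical} sequence regime in Flajolet and Sedgewick's terminology, not the critical one.
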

\begin{proof}
Both basis permutations of $\Dtwo$ are sum indecomposable, so $\Dtwo$ is
sum closed and unique sum decomposition gives
$D^{(2)}=1/(1-I^{(2)})$. Write $X=\sqrt{1-z/\rho^{(2)}}$. The local
expansion established above has the form
\[
 D^{(2)}(z)=D_0^{(2)}+D_1^{(2)}X+O(X^2),
\]
and hence
\begin{equation}\label{eq:two-I-local}
 I^{(2)}(z)=\theta^{(2)}
 +\frac{D_1^{(2)}}{(D_0^{(2)})^2}X+O(X^2).
\end{equation}
Marking components gives
\[
 D^{(2)}(z,u)=\frac1{1-uI^{(2)}(z)}.
\]
The coefficient of $X$ in this expression is
\[
 \frac{uD_1^{(2)}}{(D_0^{(2)})^2(1-u\theta^{(2)})^2}.
\]
At $u=1$ this reduces to $D_1^{(2)}$, since
$1-\theta^{(2)}=1/D_0^{(2)}$. The square-root transfer theorem, uniformly
for $u$ near $1$, therefore gives the limiting probability generating
function \eqref{eq:two-component-pgf}; coefficient extraction in $u$
gives \eqref{eq:two-component-limit}, and differentiation at $u=1$ gives
\eqref{eq:two-component-mean}. Finally, the coefficient of $X$ in
\eqref{eq:two-I-local} is $D_1^{(2)}/(D_0^{(2)})^2$, so
\[
 \frac{[z^n]I^{(2)}(z)}{d_n^{(2)}}
 \longrightarrow\frac1{(D_0^{(2)})^2}.
\]
\end{proof}

\subsection{Poles produced by dilation orbits and non-D-finiteness}\label{sec:two-poles}
The first sheet was enough for the coefficient asymptotic. To prove
non-D-finiteness we continue the product--sum expression across the
quadratic branch point.

Let $\mathcal S$ be the two-sheeted Riemann surface of the algebraic function $v(t)$ defined by
\begin{equation}\label{eq:two-v-surface}
 tv^2-b(t)v+t^2=0.
\end{equation}
On $\mathcal S$ put $r=-v/t$, so that the original branch
is the one with $v=t^2+O(t^3)$.

\begin{lemma}\label{lem:two-sheet-continuation}
The right-hand side of \eqref{eq:two-D-exact}, with $c$ given by
\eqref{eq:two-ctr}, defines a meromorphic function on $\mathcal S$. Near
$t=0$ on the original branch it agrees with the function defined by the
formal power series $D^{(2)}(t/(1+t)^2)$. In particular, this
function can be continued meromorphically along paths to either sheet,
apart from isolated poles.
\end{lemma}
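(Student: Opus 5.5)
The plan is to view the right-hand side of \eqref{eq:two-D-exact} as a function of the pair $(t,v)\in\mathcal S$, with $r=-v/t$, and to check that each ingredient is meromorphic in that pair. The ingredients are $\eta_t(r)$, $P_t(t^2r)$, $Q_t(t^2r)$ (through \eqref{eq:two-ctr}), and $P_t(t)$, $Q_t(t)$.

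The first step is to note that $P_t(w)$ and $Q_t(w)$ are jointly meromorphic in $(t,w)$ on $\{0<|t|<1\}\times\mathbb C$. The factors $\alpha_t(t^{2i}w)$ and $\beta_t(t^{2i}w)$ are rational in $(t,w)$. On a compact set $E$ of pairs $(t,w)$ with $|t|\le r_0<1$, all but finitely many indices $i$ satisfy
\[
\alpha_t(t^{2i}w)=1+O(r_0^{2i}),\qquad \beta_t(t^{2i}w)=O(r_0^{2i})
\]
uniformly, exactly as in the proof of Lemma~\ref{lem:two-PQ}. Hence the tail product is holomorphic and nonvanishing on $E$, and the tail sum is holomorphic there. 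The finitely many initial factors are rational, so $P_t(w)$ and $Q_t(w)$ are meromorphic on $E$. Note that $Q_t$ is not a finite expression, but it is a finite rational combination plus a normally convergent tail, which suffices.

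Next, I would compose with the holomorphic maps $(t,v)\mapsto t^2r=-tv$ and $(t,v)\mapsto t$ from $\mathcal S$. This needs $\mathcal S$ realised as a Riemann surface: its normalisation, with $t$ a local coordinate away from the branch points (the zeros of $\Delta$ and of the leading coefficient $t$) and a square-root local parameter at the branch points. Composition of meromorphic functions of several variables with holomorphic maps is meromorphic, provided the image is not contained in the polar set. This proviso holds because the original branch near $t=0$ gives finite values by Theorem~\ref{thm:two-exact}.

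The denominator $P_t(t^2r)$ must not vanish identically on $\mathcal S$. Since $\mathcal S$ is connected (the discriminant $\Delta$ has simple zeros, so the cover is irreducible), it suffices that $P_t(t^2r)\neq0$ on the original branch for small $t$, which was shown before \eqref{eq:two-c-formula}. Then $c$, and hence the whole expression $1+t\frac{1-t}{1+t}(cP_t(t)+Q_t(t))$, is a quotient of meromorphic functions with nonzero denominator, so it is meromorphic on $\mathcal S$ restricted to $|t|<1$. The points $t=0$ and the sheet over $t=0$ need care. On the second sheet $v\sim 1/t$ near $0$, so I would either restrict $\mathcal S$ to $0<|t|<1$ or treat those points as isolated singularities, since the statement only claims continuation "apart from isolated poles."

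Agreement with $D^{(2)}(t/(1+t)^2)$ near $t=0$ on the original branch is exactly Theorem~\ref{thm:two-exact}. Continuation along paths to either sheet then follows from the identity theorem on the connected surface.

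The main obstacle is the careful joint meromorphy of the infinite products and sums in $(t,w)$, together with the local structure at the branch point $t^{(2)}$ and at $t=0$. For the products and sums, I would first handle them uniformly on compacta away from $|t|=1$, as above. At the branch points, I would work in the local uniformizer and use that $(t,v)$ depend holomorphically on it.
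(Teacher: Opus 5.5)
Your proposal is correct and follows the paper's proof: normal convergence of the tails on compacta with $|t|\le r_0<1$, finitely many rational initial factors giving meromorphy, identification with $D^{(2)}$ via Theorem~\ref{thm:two-exact}, and connectedness of $\mathcal S$ for continuation to either sheet. Your route through joint meromorphy of $P_t(w),Q_t(w)$ in $(t,w)$, followed by composition with $(t,v)\mapsto -tv$ and $(t,v)\mapsto t$, and your explicit check that $P_t(t^2r)\not\equiv0$, only spell out what the paper leaves implicit; note that connectedness of the part of $\mathcal S$ over $|t|<1$ comes from the simple branch point $t^{(2)}$ lying inside the disc, not from global irreducibility alone.
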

\begin{proof}
Consider first a compact subset of $\mathcal S$. Its $t$-coordinates
satisfy $|t|\leq r_0<1$, and $v$ is bounded there. The arguments occurring
in $P_t(t),Q_t(t),P_t(t^2r)$, and $Q_t(t^2r)$ are respectively
\[
 t^{2i+1}\quad\hbox{and}\quad
 t^{2i+2}r=-t^{2i+1}v,
 \qquad i\geq0,
\]
and hence tend uniformly to zero. Uniformly in both variables,
\[
 \alpha_t(t^{2i}w)=1+O(r_0^{2i}),
 \qquad
 \beta_t(t^{2i}w)=O(r_0^{2i}).
\]
The tails of all four products and sums therefore converge normally.

For sufficiently large $i$, their denominators are uniformly close to
their nonzero values at $w=0$. Thus on any compact set only finitely many
initial factors can vanish. The union of all such denominator and
numerator-zero divisors is consequently locally finite. Factoring off
those finitely many initial terms shows that $P_t,Q_t,c$, and the final
expression are meromorphic across each of them. Theorem
\ref{thm:two-exact} identifies this expression with
$D^{(2)}(t/(1+t)^2)$ on a nonempty open subset of the original sheet. It
therefore provides a meromorphic continuation of that function from
$t=0$.

Finally, $\mathcal S$ is connected: its two sheets meet above the simple
zero $t^{(2)}$ of the discriminant. Removing a locally finite set from a
connected Riemann surface leaves continuation paths after arbitrarily
small detours; in particular, a loop in the $t$-plane around
$t^{(2)}$ lifts to a path that exchanges the two sheets, hence the continuation statement.
\end{proof}

For $N\geq1$, set
\[
 F_N(t)=H_t(t^{2N+1})=1+b(t)t^{2N}+t^{4N+3}.
\]
At a zero of $F_N$, the two values of $v$ are
\begin{equation}\label{eq:two-v-at-FN}
 v=-t^{2N+2},
 \qquad
 v=-t^{-(2N+1)}.
\end{equation}

\begin{proposition}\label{prop:two-orbit-pole}
Let $t_0$ be a simple zero of $F_N$ with $0<|t_0|<1$ and
$\Delta(t_0)\neq0$. On the sheet where $v=-t^{-(2N+1)}$, the
meromorphic continuation of
\[
 \widehat D^{(2)}(t)=D^{(2)}\left(\frac{t}{(1+t)^2}\right)
\]
has a pole of order three at $t=t_0$.
\end{proposition}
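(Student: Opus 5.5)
The plan is to read the pole order directly off the product--sum formula of Theorem~\ref{thm:two-exact}, continued to the second sheet as in Lemma~\ref{lem:two-sheet-continuation}. On the sheet $v=-t^{-(2N+1)}$ near $t_0$ I count, separately for each of the five ingredients $\eta_t(r)$, $P_t(t^2r)$, $Q_t(t^2r)$, $P_t(t)$ and $Q_t(t)$, the order of its zero or pole at $t_0$. I then check that the worst term, $c\,P_t(t)$, is not cancelled.

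\textbf{Step 1: local coordinate.} Use $H_t(w)=(1+tw/v)(1+vw)$ to factor
\[
 F_N(t)=H_t(t^{2N+1})=\bigl(1+t^{2N+2}/v\bigr)\bigl(1+v\,t^{2N+1}\bigr).
\]
On the chosen sheet the first factor at $t_0$ equals $1-t_0^{4N+3}\neq0$. Hence $\lambda(t):=1+v(t)t^{2N+1}$ has a simple zero at $t_0$, because $F_N$ does. Since $\Delta(t_0)\neq0$, $v$ is analytic near $t_0$ on this sheet, so $\lambda$ serves as a local coordinate. At $t_0$ we have $r=-v/t=t_0^{-(2N+2)}$ and $s=-1/v=t_0^{2N+1}$.

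\textbf{Step 2: the arguments $t^{2i+1}$ in $P_t(t)$ and $Q_t(t)$.} For $i=N$ the factor $\alpha_t(t^{2N+1})$ equals $L_t(t^{2N+1})/F_N(t)$, and $L_t(t^{2N+1})=(1-t^{2N+2})(1-t^{2N+3})\neq0$. So this factor has a simple pole.

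For $i\neq N$, $H_t(t^{2i+1})$ cannot vanish. Vanishing would require $t_0^{2i+1}=s=t_0^{2N+1}$, or $t_0^{2i+1}=r=t_0^{-(2N+2)}$, and neither is possible for $0<|t_0|<1$. The $L_t$ factors and the extra denominator $1-t^2w$ of $K_t$ are also nonzero at these arguments. Therefore $P_t(t)$ has exactly a simple pole, and $Q_t(t)$ has at most a simple pole.

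\textbf{Step 3: the arguments $t^{2i+2}r=-t^{2i+1}v$ in $P_t(t^2r)$ and $Q_t(t^2r)$.} At $t_0$ these arguments equal $t_0^{2i-2N}$.

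1. They never hit a zero of $H_t$. Equality with $r$ forces $t^{2i+2}=1$. Equality with $s$ forces $t_0^{2i-2N}=t_0^{2N+1}$, which fails by parity.
2. They never hit $t^{-1}$, again by parity.
3. They hit $t^{-2}$ exactly once, at $i=N-1$; this needs $N\geq1$. There $1-t^2w=1-t^{2N+1}\cdot(-v)=\lambda$ in the local coordinate of Step~1, so it vanishes simply.

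Consequences for each ingredient:

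1. In $P_t(t^2r)$ the numerator factor $1-t^2w$ of $\alpha_t$ gives a simple zero, and no other factor vanishes or blows up. So $P_t(t^2r)$ has a simple zero.
2. In $Q_t(t^2r)$ the summand $j=N-1$ contains $\beta_t$ evaluated at this argument, and $\beta_t$ has the factor $1/(1-t^2w)$, giving a simple pole. All summands with $j\geq N$ contain the vanishing $\alpha_t$ factor, so they are regular. The leading coefficient is $\beta$'s residue times $\prod_{i<N-1}\alpha_t$, and both are nonzero by item~1. So $Q_t(t^2r)$ has an exact simple pole.
3. $\eta_t(r)$ is regular, because $1-tr$ and $1-t^2r$ equal $1-t_0^{-(2N+1)}$ and $1-t_0^{-2N}$ at $t_0$, both nonzero.

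Hence, by \eqref{eq:two-ctr}, $c=(\eta_t(r)-Q_t(t^2r))/P_t(t^2r)$ has an exact pole of order two.

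\textbf{Step 4: conclusion.} In $c\,P_t(t)+Q_t(t)$ the product $c\,P_t(t)$ has an exact pole of order three, while $Q_t(t)$ has order at most one, so no cancellation can occur. The prefactor $t(1-t)/(1+t)$ in \eqref{eq:two-D-exact} is nonzero at $t_0$. Therefore $\widehat D^{(2)}$ has a pole of order three at $t_0$.

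The main obstacle is the bookkeeping in Steps~2--3: one must certify that no other initial factor of the four products and sums accidentally vanishes or blows up at $t_0$, since such a factor could raise or lower the order. My approach is the coincidence analysis above, which reduces every potential coincidence to an identity $t_0^a=t_0^b$ with $a\neq b$ or to a parity mismatch; these are impossible for $0<|t_0|<1$.
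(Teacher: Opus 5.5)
Your proposal is correct and follows the paper's proof essentially step for step. You use the same orbit-coincidence analysis: the factor $1-t^2w$ at index $N-1$ gives $P_t(t^2r)$ a simple zero and $Q_t(t^2r)$ a simple pole; $\eta_t(r)$ is regular, so $c$ has a double pole; and $H_t(t^{2N+1})=F_N(t)$ gives $P_t(t)$ a simple pole while $Q_t(t)$ has at most a simple pole. Your write-up is slightly more explicit than the paper's (the local coordinate $\lambda=1+vt^{2N+1}$ obtained by factoring $F_N$, the check that $1-tr$ and $1-t^2r$ do not vanish, and the exactness of the pole of $Q_t(t^2r)$), but the argument is the same.
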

\begin{proof}
On this sheet, at $t=t_0$,
\[
 r=t^{-2N-2},
 \qquad
 s=t^{2N+1}.
\]
In $P_t(t^2r)$, the factor with index $N-1$ is evaluated at
$t^{2N}r=t^{-2}$. Here $\alpha_t$ has a simple zero, because
$1-t^2w=0$, while $\beta_t$ has a simple pole. As a function on the chosen
sheet, the vanishing factor is $1+t^{2N+1}v$; its zero is simple by the
hypothesis on $F_N$ and \eqref{eq:two-v-at-FN}. No other orbit point meets
a zero of $H_t$: the orbit points are $t^{2j-2N}$, while the roots are
$t^{-2N-2}$ and $t^{2N+1}$, and $|t|<1$ rules out equality of unequal
powers. The displayed factor is likewise the only zero contributed by
$L_t$.

All earlier factors are therefore finite and nonzero, and every later
term in $Q_t(t^2r)$ contains the vanishing factor. Thus $P_t(t^2r)$ has a
simple zero and $Q_t(t^2r)$ a simple pole. The quantity $\eta_t(r)$ is
regular, so \eqref{eq:two-ctr} shows that $c(t,r)$ has a pole of order
two.

In $P_t(t)$, the factor with index $N$ is evaluated at
$t^{2N+1}=s$, the uncancelled zero of $H_t$. Hence $P_t(t)$ has a
simple pole, and no other index meets either root. The sum $Q_t(t)$ has
at most a simple pole. Consequently $c(t,r)P_t(t)$ has a nonzero pole of
order three which cannot be cancelled by $Q_t(t)$. Equations
\eqref{eq:two-g-exact} and \eqref{eq:two-D-exact} prove the claim.
\end{proof}

\begin{proposition}\label{prop:two-infinite-poles}
For all sufficiently large $N$, the polynomial $F_N$ has a simple zero
$t_N$ with $|t_N|<1$. The zeros may be chosen distinct and tending to
$i$.
\end{proposition}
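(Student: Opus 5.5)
The plan is to treat $F_N(t)=1+b(t)t^{2N}+t^{4N+3}$ as a quadratic in $X=t^{2N}$, namely $F_N=1+b(t)X+t^3X^2$, and to solve $t^{2N}=a(t)$ for a suitable analytic root $a(t)$ of this quadratic near $t=i$. At $t=i$ we have $t^3=-i$ and $b(i)=1-2i+2-i=3-3i$, so $b(i)^2-4i^3=-18i+4i=-14i$, with square root $\sqrt7(1-i)$. The two roots are
\[
 X_\pm=\frac{(1-i)(-3\pm\sqrt7)}{-2i},
 \qquad
 |X_\pm|=\frac{\sqrt2\,(3\mp\sqrt7)}{2}\approx 0.25,\ 4.0 .
\]
They are distinct, and one of them, call it $a_0$, satisfies $0<|a_0|<1$. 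By the implicit function theorem there is a root $a(t)$ of $1+b(t)X+t^3X^2=0$, analytic in a fixed neighbourhood $U$ of $i$, with $a(i)=a_0$. The other root $a_+(t)$ stays bounded away from $a(t)$ on $U$. On $U$ we then have the factorisation $F_N(t)=t^3\bigl(t^{2N}-a(t)\bigr)\bigl(t^{2N}-a_+(t)\bigr)$.

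Next we locate a zero of $t^{2N}-a(t)$ near $i$ by a rescaled Rouch\'e argument. Write $t=ie^{-u}$, so that $t^{2N}=(-1)^Ne^{-2Nu}$. Fix a branch $\lambda=\log a_0$ with $\operatorname{Re}\lambda=\log|a_0|<0$, and set $u_N=-\lambda/(2N)$. Then $\operatorname{Re}u_N>0$, which gives $|ie^{-u_N}|<1$, and $u_N=O(1/N)$. Put $\zeta=2N(u-u_N)$ and $G_N(\zeta)=(-1)^Nt^{2N}-(-1)^Na(t)$, adjusting by the branch choice so that the leading term is $a_0(e^{-\zeta}-1)$. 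Since $a(ie^{-u})=a_0+O(1/N)$ uniformly for $|\zeta|\le1$, we get
\[
 G_N(\zeta)=a_0\bigl(e^{-\zeta}-1\bigr)+O(1/N)
 \qquad(|\zeta|\le1).
\]
The function $e^{-\zeta}-1$ has a single simple zero in $|\zeta|\le1$ and is bounded below on $|\zeta|=1$. Rouch\'e's theorem therefore gives exactly one zero $\zeta_N$ with $|\zeta_N|<1$ for large $N$. The corresponding point $t_N=ie^{-u_N-\zeta_N/(2N)}$ satisfies $\log|t_N|=\log|a_0|/(2N)+O(1/N^2)<0$. Hence $|t_N|<1$ and $t_N\to i$.

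For simplicity of the zero, note first that $t^{2N}-a(t)$ has a simple zero at $t_N$: its derivative is $2Nt_N^{2N-1}-a'(t_N)$, which has modulus of order $N|a_0|$. Equivalently, the zero is simple because Rouch\'e counted exactly one. The other factor $t^{2N}-a_+(t)$ does not vanish at $t_N$, since $t_N^{2N}=a(t_N)\ne a_+(t_N)$. So $t_N$ is a simple zero of $F_N$.

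The zeros are distinct for different large $N$ because $\log|t_N|\sim\log|a_0|/(2N)$ is strictly monotone in $N$ for large $N$. Finally, since $t_N\to i$ and $\Delta(i)=1+2i+5-2i+1=7\neq0$, we also get $\Delta(t_N)\ne0$ for large $N$. This condition is not part of the statement, but it is exactly what Proposition~\ref{prop:two-orbit-pole} needs.

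The main obstacle is the uniformity in the Rouch\'e step: checking that the perturbation $a(ie^{-u})-a_0$ is $O(1/N)$ uniformly on the rescaled disc, with constants that do not depend on $N$, and keeping track of the branch of $\log a_0$ and the sign $(-1)^N$ so that the rescaled equation really is $e^{-\zeta}=1+O(1/N)$.
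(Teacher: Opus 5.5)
Your route is essentially the paper's. Both proofs rescale around $t=i$ on the scale $1/N$ and reduce to the same quadratic: with $X=t^{2N}=\epsilon y$, $\epsilon=(-1)^N$, your $-iX^2+(3-3i)X+1=0$ becomes the paper's $y^2+3\epsilon(1+i)y+i=0$. Both then finish with Rouch\'e/Hurwitz. Factoring off $t^{2N}-a_+(t)$ first is a harmless refinement that makes simplicity transparent. Two steps need repair, however.

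\textbf{The branch of the logarithm.} With one fixed $\lambda=\log a_0$, the rescaled leading term is $a_0\bigl(e^{-\zeta}-(-1)^N\bigr)$. For odd $N$ this has no zero in $|\zeta|\le1$, since its zeros are at $\zeta\in i\pi(2\mathbb Z+1)$. You must take $\lambda_N$ to be a logarithm of $(-1)^Na_0$, with one choice per parity, both having real part $\log|a_0|$. This is exactly the paper's two parity subsequences. You flagged this, but as written the construction fails for odd $N$.

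\textbf{Distinctness.} Here the argument is a genuine non sequitur. From $\log|t_N|=\log|a_0|/(2N)+O(N^{-2})$ you cannot conclude strict monotonicity in $N$. Consecutive differences of the main term are $\log|a_0|/(2N(N+1))$, which is the same order as the error term. That error term moreover involves the parity-dependent $\lambda_N$, so there is no reason for it to be monotone.

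You have also not shown $\zeta_N=O(1/N)$: Rouch\'e on $|\zeta|\le1$ only gives $|\zeta_N|<1$. That weaker bound still suffices for $|t_N|<1$, because $\log|t_N|<(\log|a_0|+1)/(2N)$ and $\log|a_0|+1<0$.

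The paper avoids the monotonicity issue by passing to $N=2^k$. In your setting there is a one-line fix that even gives pairwise distinctness for all large $N$. If $t_N=t_M=t$ with $N\ne M$, then $t^{2N}=a(t)=t^{2M}$, so $t^{2|N-M|}=1$, which is impossible for $0<|t|<1$.
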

\begin{proof}
Put $t=i\exp(s/N)$. Uniformly for $s$ in compact sets,
\[
 F_N(ie^{s/N})\longrightarrow
 f_\epsilon(s)=1+\epsilon(3-3i)e^{2s}-ie^{4s},
 \qquad \epsilon=(-1)^N,
\]
along either parity subsequence. Indeed, $b(i)=3-3i$,
$i^{2N}=(-1)^N$, and $i^{4N+3}=-i$.

Writing $y=e^{2s}$, the limiting equation becomes
\[
 y^2+3\epsilon(1+i)y+i=0.
\]
It has the simple root
\[
 y_\epsilon=-\epsilon\frac{3-\sqrt7}{2}(1+i),
 \qquad
 |y_\epsilon|=\frac{3-\sqrt7}{\sqrt2}<1.
\]
Choose $s_\epsilon$ with $e^{2s_\epsilon}=y_\epsilon$; then
$\Re s_\epsilon<0$. Hurwitz's theorem, equivalently Rouch\'e's theorem on
a small circle, gives a simple zero
\[
 t_N=ie^{s_N/N},
 \qquad s_N=s_\epsilon+O(N^{-1}).
\]
Thus $|t_N|<1$ for large $N$ and $t_N\to i$. Passing, for instance, to
$N=2^k$ makes the zeros distinct. Since $\Delta(i)=7$, the two sheet
values in \eqref{eq:two-v-at-FN} are distinct at these zeros for all
sufficiently large $N$.
\end{proof}

\begin{corollary}\label{cor:two-orbit-pole-location}
For the zeros $t_N$ in Proposition~\ref{prop:two-infinite-poles}, put
$z_N=z(t_N)$. After passage to the distinct subsequence used there, the
$z_N$ are distinct poles of analytic continuations of $D^{(2)}$ and
\[
 z_N\longrightarrow z(i)=\frac12.
\]
More precisely, on either parity subsequence and with the choice of
$s_\epsilon$ in that proof,
\[
 z_N=\frac12-\frac{i s_\epsilon}{2N}+O(N^{-2}).
\]
\end{corollary}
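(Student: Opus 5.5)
The corollary has two parts: the $z_N$ are distinct poles of continuations of $D^{(2)}$, and they have the stated location. I would take them in that order.

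\emph{Pole property.} This follows from Propositions~\ref{prop:two-orbit-pole} and \ref{prop:two-infinite-poles}. For large $N$, Proposition~\ref{prop:two-infinite-poles} supplies a simple zero $t_N$ of $F_N$ with $|t_N|<1$, $t_N\neq0$, and $\Delta(t_N)\neq0$, since $\Delta(i)=7$. Proposition~\ref{prop:two-orbit-pole} then gives a pole of order three of the continuation of $\widehat D^{(2)}(t)=D^{(2)}(z(t))$ at $t_N$, on the sheet $v=-t^{-(2N+1)}$. Lemma~\ref{lem:two-sheet-continuation} guarantees this sheet is reached by continuation along paths from the origin.

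To pass to the $z$-variable, I would argue exactly as in Proposition~\ref{prop:three-singularity}. We have $z'(t)=(1-t)/(1+t)^3$, which near $t=i$ is $(1-i)/(1+i)^3\neq0$. So $t\mapsto z(t)$ is locally biholomorphic at each $t_N$, and the continued function has a pole of order three at $z_N=z(t_N)$.

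\emph{Distinctness.} The $t_N$ are distinct and lie in $|t|<1$, where $z$ is injective (its two preimages are reciprocal, as noted in \S\ref{sec:three-asymptotics}). Hence the $z_N$ are distinct.

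\emph{Location.} Write $t_N=ie^{s_N/N}$ with $s_N=s_\epsilon+O(N^{-1})$, as in the proof of Proposition~\ref{prop:two-infinite-poles}. Then
\[
 t_N=i+\frac{is_\epsilon}{N}+O(N^{-2}).
\]
Next, $z(i)=i/(1+i)^2=i/(2i)=\tfrac12$. At $t=i$ we have $(1+i)^3=-2+2i$, so
\[
 z'(i)=\frac{1-i}{-2+2i}=-\frac12.
\]
A first-order Taylor expansion then gives
\[
 z_N=\tfrac12-\tfrac12\cdot\frac{is_\epsilon}{N}+O(N^{-2}),
\]
which is the claimed formula, and in particular $z_N\to\tfrac12$.

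The main care point is the error term. One needs $s_N-s_\epsilon=O(1/N)$, not merely $o(1)$. I would get this from the Rouché/implicit-function step: $F_N(ie^{s/N})=f_\epsilon(s)+O(1/N)$ holds uniformly with its derivative near $s_\epsilon$, because expanding $b(ie^{s/N})$ around $b(i)$ contributes only $O(1/N)$. Since $s_\epsilon$ is a simple root of $f_\epsilon$, the perturbed root therefore moves by $O(1/N)$.
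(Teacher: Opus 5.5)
Your proof is correct and follows the paper's argument. The pole property comes from Propositions~\ref{prop:two-orbit-pole} and \ref{prop:two-infinite-poles} together with Lemma~\ref{lem:two-sheet-continuation}, distinctness from injectivity of $z$ on $|t|<1$, and the location from a first-order Taylor expansion using $z(i)=\tfrac12$ and $z'(i)=-\tfrac12$. Your extra remarks only make explicit what the paper leaves implicit: local biholomorphy of $z$ at $t_N$ to transfer the pole, and why $s_N-s_\epsilon=O(1/N)$ via perturbation of the simple root of $f_\epsilon$.
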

\begin{proof}
Proposition~\ref{prop:two-orbit-pole} and Lemma
\ref{lem:two-sheet-continuation} make each point a pole of a continuation
starting from $t=0$ on the original sheet. The map
$z(t)=t/(1+t)^2$ identifies two values only when they are equal or
reciprocal; hence it is injective on $|t|<1$ and the chosen $z_N$ are
distinct. Since $z'(i)=-1/2$ and
$t_N=i(1+s_\epsilon/N+O(1/N^{2}))$, Taylor expansion gives both claims.
\end{proof}

\begin{theorem}\label{thm:two-non-D-finite}
The generating function $D^{(2)}$ is not D-finite, and $(d_n^{(2)})_{n\geq0}$
is not P-recursive.
\end{theorem}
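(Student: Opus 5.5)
The plan is to argue by contradiction, using the fact that a D-finite function has only finitely many singularities, even after analytic continuation along arbitrary paths. Suppose $D^{(2)}$ satisfies a nonzero linear differential equation $\sum_{m=0}^M p_m(z)\,\partial_z^m D^{(2)}=0$ with $p_m\in\mathbb Q[z]$ and $p_M\neq0$. Cauchy's existence theorem for linear ODEs then shows that every analytic continuation of $D^{(2)}$ along any path avoiding the finite set $Z=\{z:p_M(z)=0\}$ stays analytic. The key point is that continuation along a path in $\mathbb C\setminus Z$ again produces a solution of the same equation. Such a solution is analytic at every point of $\mathbb C\setminus Z$ it reaches, so every singularity of any continuation must lie in $Z$.

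Next I would transport the poles of Corollary~\ref{cor:two-orbit-pole-location} to the $z$-plane. Lemma~\ref{lem:two-sheet-continuation} shows that $\widehat D^{(2)}(t)=D^{(2)}(t/(1+t)^2)$ continues meromorphically along paths on $\mathcal S$ starting at $t=0$ on the original sheet. I would push such a path down to the $z$-plane via $z=t/(1+t)^2$. This map is a local biholomorphism on $|t|<1$, because $z'(t)=(1-t)/(1+t)^3\neq0$. Hence the continuation of $\widehat D^{(2)}$ along the lifted path is exactly the continuation of $D^{(2)}$ along the projected path, composed with $z(t)$.

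By Proposition~\ref{prop:two-orbit-pole}, at each $t_N$ this continuation has a pole of order three. Since $z$ is locally biholomorphic, the projected continuation of $D^{(2)}$ has a pole at $z_N$. Corollary~\ref{cor:two-orbit-pole-location} gives infinitely many distinct such $z_N$, and only finitely many of them lie in $Z$. Pick $z_N\notin Z$. The path reaching $t_N$ can be perturbed, as allowed by Lemma~\ref{lem:two-sheet-continuation}, to avoid the locally finite set of other poles and the preimages of $Z$; this is possible because the preimage of $Z$ under a local biholomorphism is discrete. The projected path then lies in $\mathbb C\setminus Z$ except possibly at its endpoint, and its endpoint $z_N\notin Z$ is a singularity. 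This contradicts the first step.

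The main obstacle is this path bookkeeping. The continuation on $\mathcal S$ lives on a two-sheeted surface over the $t$-disc, and one must check that the analytic continuation along the projected $z$-path really is the function whose pole Proposition~\ref{prop:two-orbit-pole} exhibits. In other words, the ODE-continuation and the $\mathcal S$-continuation have to agree all along the path. I would handle this by uniqueness of analytic continuation, since the two agree near $z=0$ by Theorem~\ref{thm:two-exact}. I would also note that the loop around $t^{(2)}$ used to change sheets projects to a loop around $z^{(2)}$. Such a loop is legitimate, since it only has to avoid $Z$, and a small enough loop can be chosen to do so. Finally, as in Theorem~\ref{thm:three-non-D-finite}, the equivalence of D-finiteness with P-recursiveness \cite{StanleyDF} gives the statement about $(d_n^{(2)})$.
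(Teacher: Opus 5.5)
Your argument is correct and is essentially the paper's: both rest on the fact that every analytic continuation of a solution of a linear ODE with polynomial coefficients is singular only at the finitely many zeros of its leading coefficient, which is contradicted by the infinitely many poles from Propositions~\ref{prop:two-orbit-pole} and~\ref{prop:two-infinite-poles}. The only difference is cosmetic. The paper transfers D-finiteness to $\widehat D^{(2)}$ by closure under rational substitution and counts poles in the $t$-plane, whereas you push the poles down to the distinct points $z_N$ through the local biholomorphism $z(t)$, argue with the original equation, and spell out the path bookkeeping that the paper leaves implicit.
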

\begin{proof}
If $D^{(2)}(z)$ were
D-finite, closure under rational substitution would make
$\widehat D^{(2)}(t)=D^{(2)}(t/(1+t)^2)$ D-finite~\cite{StanleyDF}.
Propositions~\ref{prop:two-orbit-pole} and
\ref{prop:two-infinite-poles} give infinitely many poles of analytic
continuations of $\widehat D^{(2)}$ in the finite $t$-plane,
and, as in the proof of Theorem~\ref{thm:three-non-D-finite}, this implies that $\widehat D^{(2)}$ is not D-finite.
\end{proof}

\end{document}